
\documentclass[12pt]{amsart}  
\usepackage{amsmath,graphicx,amssymb,amsthm,amssymb,enumerate}
\usepackage{url,caption}

\usepackage{tikz}
\usetikzlibrary{calc,arrows,trees,positioning,matrix}

\usepackage{graphicx}
\usepackage{lscape}
\usepackage{indentfirst}
\usepackage{latexsym}
\usepackage{multirow}
\usepackage{tabls}
\usepackage{wrapfig}
\usepackage{slashbox}
\usepackage{longtable}
\widowpenalty=1000
\clubpenalty=1000

\setlength{\textwidth}{5.9in}
\setlength{\oddsidemargin}{.30in}   
\setlength{\evensidemargin}{.30in}  
\setlength{\parindent}{.4in}
\pagestyle{plain}

\newcommand{\isomto}{\overset{\sim}{\rightarrow}}
\newcommand{\Z}{\ensuremath{\mathbf Z}} 
 
\newcommand{\R}{\ensuremath{\mathbf R}} 
\newcommand{\C}{\ensuremath{\mathbf C}} 
\newcommand{\captionme}[1]{\captionof{figure}[#1]{#1}}

\newcommand{\isom}{\cong}

\newcommand{\cross}{\ensuremath{^\times}}




\tikzstyle myBG=[line width=.6mm,opacity=1.0]
\swapnumbers
\theoremstyle{plain}
\newtheorem{theorem}{Theorem}[section]
\newtheorem{lemma}[theorem]{Lemma}
\newtheorem{corollary}[theorem]{Corollary}

\theoremstyle{definition}

\theoremstyle{remark}

\DeclareMathOperator{\siz}{size}
\DeclareMathOperator{\care}{char}
\DeclareMathOperator{\size}{size}
\DeclareMathOperator{\inv}{inv}
\DeclareMathOperator{\adj}{Ad}
\DeclareMathOperator{\lie}{Lie}

\DeclareMathOperator{\orb}{O_\gamma}
\DeclareMathOperator{\twisted}{TO_\delta}

\DeclareMathOperator{\meas}{meas}

\newcommand{\wae}{\ensuremath{\widetilde{W}_a}}
\newcommand{\wa}{\ensuremath{W_a}}

\newcommand{\build}{\ensuremath{\mathcal{B}}}
\newcommand{\sigd}{\ensuremath{^\sigma_\delta}}
\newcommand{\ints}{\ensuremath{\mathcal{O}}}
\DeclareMathOperator{\val}{val}

\newcommand{\intsunits}{\ensuremath{\ints\cross}}
\newcommand{\sapt}{\ensuremath{\mathfrak{A}}}
\newcommand{\apt}{\ensuremath{\mathfrak{A}_T}}

\newcommand{\zhi}{\ensuremath{Z(\mathcal{H})}}
\newcommand{\zhie}{\ensuremath{Z(\mathcal{H}_{E})}}
\newcommand{\zhil}{\ensuremath{Z(\mathcal{H}_{L}})}

\DeclareMathOperator{\gal}{Gal}

\newcommand{\delsig}{\ensuremath{\delta\sigma}}
\newcommand{\vd}{\ensuremath{\val\det}}

\newcommand{\xwgz}{\ensuremath{X_w^0(\gamma)}}
\newcommand{\xwdz}{\ensuremath{X_{w}^0(\delta\sigma)}}

\newcommand{\primes}{\ensuremath{\mathfrak{p}}}

\setcounter{secnumdepth}{5}

\allowdisplaybreaks[3]

\begin{document}
\newcommand{\plainh}{\mathcal{H}}
\title{Base Change for the Iwahori-Hecke Algebra of $GL_2$}
\author{Walter Ray-Dulany}
\address{University of Maryland, College Park, USA}
\email{walter.rd.math@gmail.com}
\begin{abstract}
Let $F$ be a non-Archimedean local field of characteristic not equal to $2$, let $E/F$ be a finite unramified extension field, and let $\sigma$ be a generator of $\gal(E,F)$. Let $f$ be an element of $Z(\plainh_{I_E})$, the center of the Iwahori-Hecke algebra for $GL_2(E)$, and let $b$ be the Iwahori base change homomorphism from $Z(\plainh_{I_E})$ to $Z(\plainh_{I_F})$, the center of the Iwahori-Hecke algebra for $GL_2(F)$ \cite{haineslemma}. This paper proves the matching of the $\sigma$-twisted orbital integral over $GL_2(E)$ of $f$ with the orbital integral over $GL_2(F)$ of $bf$.
To do so, we compute the orbital and $\sigma$-twisted orbital integrals of the Bernstein functions $z_\mu$. These integrals are computed by relating them to counting problems on the set of edges in the building for $SL_2$. Surprisingly, the integrals are found to be somewhat independent of the conjugacy class over which one is integrating. The matching of the integrals follows from direct comparison of the results of these computations. The fundamental lemma proved here is an important ingredient in the study of Shimura varieties with Iwahori level structure at a prime $p$ \cite{hainesshimura}.
\end{abstract}
\maketitle
%
%
%
%
%
%
%
%
\setlength{\parskip}{0em}
\setcounter{tocdepth}{1}
\tableofcontents
\small\normalsize

\pagenumbering{arabic}


\section{Introduction and Statement of Main Results}\label{intro}
The base change fundamental lemma arises in the study of base change for automorphic representations. Based on earlier work by Saito \cite{saito}, it was first introduced by Shintani \cite{shintani} and Langlands \cite{langlands} as the main step in the comparison of the trace formula for $GL_2(L)$, $L$ a number field, with the twisted trace formula for $GL_2({L'})$, ${L'}$ a cyclic extension of $L$ of degree $l$. Let $F$ be the completion of $L$ at a finite prime $\mathfrak{p}$, and let $E$ be an unramified cyclic extension of $F$. Among other terms, the trace formula involves orbital integrals on \emph{spherical Hecke algebras} $\mathcal{H}_K$ of compactly supported locally constant $\C$-valued functions on $GL_2(F)$ which are $K$-bi-invariant, $K$ a hyperspecial maximal compact subgroup of $GL_2(F)$, while the twisted trace formula involves twisted orbital integrals of functions in $\mathcal{H}_{K_E}$, $K_E$ a hyperspecial maximal compact subgroup of $GL_2(E)$.

Langlands showed the following.
\begin{theorem}
\cite[5.10]{langlands} 
Let $F$ be a non-Archimedean local field of characteristic zero
\footnote{
Although Langlands assumes $\care(F)=0$, his proof works without alteration in all characteristics except $2$.
},
 let $E$ be an unramified extension of $F$ of prime degree $l$, let $\sigma$ generate $\gal(E,F)$, let $\phi\in \mathcal{H}_{K_{E}}$, and let $\gamma\in GL_2(F)$. If $\gamma=N(\delta)$ for some $\delta\in GL_2(E)$, then
$$\int_{G_\delta(F)\backslash GL_2(E)} \phi(g^{-1}\delta\sigma(g))dg=\xi(\gamma)\int_{T(F)\backslash GL_2(F)} b(\phi)(g^{-1}\gamma g)dg.$$ Here $\xi(\gamma)$ is $1$ unless $\gamma$ is central and $\delta\neq g^{-1}\delta'\sigma(g)$ for any central element $\delta'$ and $g\in GL_2(E)$, in which case it is $-1$. If  $\gamma\neq N(\delta)$ for any $\delta\in GL_2(E)$, then
$$\int_{T(F)\backslash GL_2(F)} b(\phi)(g^{-1}\gamma g)dg=0.$$
\end{theorem}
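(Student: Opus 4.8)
\smallskip
\noindent\emph{Proof proposal.} The plan is to reduce the statement to an explicit family of Hecke operators and then to evaluate the twisted orbital integrals over $GL_2(E)$ and the orbital integrals over $GL_2(F)$ directly, as weighted lattice counts in the Bruhat--Tits building. By linearity in $\phi$ it suffices to treat a spanning set of $\mathcal{H}_{K_E}$, and one may peel off the central generator: since $\mathbf{1}_{K_E\diag(\pi^{m},\pi^{n})K_E}=\mathbf{1}_{\pi^{n}K_E}\ast\mathbf{1}_{K_E\diag(\pi^{m-n},1)K_E}$ for $m\ge n$, and since convolution with the central coset $\mathbf{1}_{\pi^{n}K_E}$ replaces $\delta$ by $\pi^{-n}\delta$ in a twisted orbital integral while $b(\mathbf{1}_{\pi^{n}K_E})=\mathbf{1}_{\pi^{nl}K_F}$ (Satake isomorphism) replaces $\gamma$ by $\pi^{-nl}\gamma=N(\pi^{-n}\delta)$ without changing the factor $\xi$, it is enough to treat $\phi=\mathbf{1}_{K_E\diag(\pi^{k},1)K_E}$ for $k\ge 0$. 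For such $\phi$, the Satake isomorphism together with the fact that $b$ corresponds on Satake transforms to the substitution $X\mapsto X^{l}$, $Y\mapsto Y^{l}$ (the dual of the norm map on the maximal torus) gives $b(\phi)$ explicitly as a combination $\sum_\nu c_\nu\,\mathbf{1}_{K_F\diag(\pi^{\nu_1},\pi^{\nu_2})K_F}$ with $\nu_1+\nu_2=lk$ and with some coefficients $c_\nu<0$; the signs will matter below.

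Next I would set up the geometric model. Identify $GL_2(E)/K_E$ with the set of $\ints_E$-lattices in $E^{2}$, and likewise over $F$; the $\ints_F$-lattices are exactly the $\sigma$-fixed $\ints_E$-lattices. Then $\mathbf{1}_{K_E\mu K_E}(g^{-1}\delta\sigma(g))$ is the indicator that the lattices $\Lambda=gK_E$ and $\delta\sigma(\Lambda)$ lie in relative position $\mu$, so the twisted orbital integral becomes a count of lattices $\Lambda$ with $\inv(\Lambda,\delta\sigma(\Lambda))=\mu$, suitably weighted by the twisted centraliser. The semilinear operator $\delta\sigma$ on lattices satisfies $(\delta\sigma)^{l}=N(\delta)=\gamma$, so it is an $l$-th root of $\gamma$; when $\gamma$ is regular semisimple the twisted centraliser $G_\delta$ is a torus isomorphic to $G_\gamma$, and I fix Haar measures accordingly so that the twisted count on the building of $GL_2(E)$ can be compared with the count of $\ints_F$-lattices computing $O_\gamma(b\phi)$ on the building of $GL_2(F)$. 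This is the point at which the counts turn out to be largely insensitive to the particular conjugacy class, as in the Iwahori case.

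The computations are then organised by the type of $\gamma$, which may be assumed semisimple (the regular non-semisimple case of $GL_2$ being disposed of by a direct germ computation). \emph{Split regular semisimple $\gamma\sim\diag(a,b)$:} $\gamma$ translates the $GL_2$-building along the apartment spanned by its two eigenlines, and both $O_\gamma(\mathbf{1}_{K_F\diag(\pi^{\nu_1},\pi^{\nu_2})K_F})$ and the relevant twisted count are finite geometric sums in the distance of a lattice from that apartment; they match after the $l$-power substitution, and for $\gamma$ not a norm the signed sum defining $O_\gamma(b\phi)$ cancels to zero. \emph{Elliptic regular semisimple $\gamma$:} $G_\gamma(F)=L^{\times}$ for a quadratic field $L/F$, and $\gamma$ fixes a vertex or an edge-midpoint of the $SL_2$-tree; the lattice count at given distance from the fixed point reduces to the split computation on the fixed apartment, again the non-norm classes contribute $0$ by cancellation, while when $L=E$ the twisted side degenerates to a split computation over $E$. \emph{Central $\gamma=zI$:} here $O_\gamma(b\phi)=(b\phi)(zI)$; the norm fibre over $z$ consists of the $\sigma$-conjugacy class of a scalar $\delta_0$, whose twisted centraliser is all of $GL_2(F)$ and which reproduces $(b\phi)(zI)$ via Hilbert 90, together with, when present, the class of a non-central $\delta$ whose twisted centraliser is anisotropic modulo centre; the latter's twisted orbital integral equals $-(b\phi)(zI)$, which is exactly $\xi(\gamma)$. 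Assembling the three cases gives both identities.

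The hardest part, I expect, will be the bookkeeping rather than any single count: normalising the Haar measures on $G_\delta(F)$ and $G_\gamma(F)$ so that the two geometrically-equal counts produce the analytic identity with the correct power of $q$ and, in the central case, the correct sign $\xi(\gamma)=-1$; and verifying cleanly that the signed combination defining $b(\phi)$ annihilates every non-norm orbital integral, which is a cancellation between the negative Satake coefficients and the geometric growth of the lattice counts. The underlying combinatorics --- counting paths in a tree, and fixed configurations of a semilinear translation or involution --- is elementary but unforgiving.
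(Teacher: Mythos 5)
This statement is Theorem~1.1 in the paper, which is Langlands' base change fundamental lemma for the \emph{spherical} Hecke algebra of $GL_2$. The paper does not prove it: it is cited as \cite[5.10]{langlands} and used only as motivation and background. The paper's own theorem and proof concern the analogous statement for the center of the \emph{Iwahori}--Hecke algebra (Theorems~1.2 and~1.3), which is a different (finer) object. So there is no proof in the paper to compare your proposal against.

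That said, your sketch is a faithful roadmap of Langlands' original argument, which the paper itself summarizes in one line: ``Langlands' proof relies upon considering the actions of $\gamma$ and $\delta\sigma$ upon the Bruhat--Tits building of $SL_2$, and reduces in part to a vertex-counting problem.'' Your reduction to $\phi=\mathbf{1}_{K_E\diag(\pi^k,1)K_E}$ by peeling off the central coset, the identification of $\ints_F$-lattices with the $\sigma$-fixed $\ints_E$-lattices, the conversion of both sides into weighted lattice counts in the tree (with $(\delta\sigma)^l=\gamma$ acting as a semilinear $l$-th root of $\gamma$), and the case split --- split regular, elliptic regular, central with the sign $\xi=-1$ when $\delta$ is not $\sigma$-conjugate to a central element --- all match the structure of \cite[\S 5]{langlands}. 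The one caution worth raising is that what you defer as ``bookkeeping'' --- verifying that the signed Satake coefficients of $b(\phi)$ annihilate the non-norm orbital integrals, and matching the explicit vertex-count formulas in the elliptic case --- is in fact the bulk of Langlands' proof; the resulting formulas (reproduced for comparison in the paper's \S 6.1) are involved, and the cancellations are delicate rather than formal. But as an outline of the argument the proposal is sound.
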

Here $b$ is the \emph{base change homomorphism for spherical hecke algebras}, an operation which is the analog for functions of local base change for unramified representations. Further, $N$ is the norm map $N(\delta)=\delta\sigma(\delta)\cdots\sigma^{l-1}(\delta)$, $G_\delta(F)=\{g\in GL_2(E): g^{-1}\delta\sigma(g)=\delta\}$ is the \emph{twisted centralizer} of $\delta$ in $GL_2(E)$, $T(F)$ is the centralizer of $\gamma$ in $GL_2(F)$, and $dg$ represents compatibly defined measures on the respective quotients.  The integral over $T(F)\backslash GL_2(F)$ is called the \emph{orbital integral} of $b(\phi)$ and is denoted $\orb(b(\phi))$, and the integral over $G_\delta(F)\backslash GL_2(E)$ is the \emph{twisted orbital integral} of $\phi$, denoted $\twisted(\phi)$. Langlands' proof relies upon considering the actions of $\gamma$ and $\delta\sigma$ upon the Bruhat-Tits building of $SL_2$, and reduces in part to a vertex-counting problem. The base change homomorphism is in this case somewhat complicated, and computing $b(\phi)$ requires passage to $\C[X_*(S)]^W$ using the Satake isomorphism. Here $S$ is a split maximal torus in $GL_2(F)$, $X_*(S)$ is its group of cocharacters and $W$ is the finite Weyl group of $GL_2$.

This theorem and its analogs for other unramified connected reductive groups have come to be known as the \emph{base change fundamental lemma for spherical Hecke algebras}. In an approach motivated by that of Langlands, Kottwitz \cite{kotthesis} proved it  for $GL_3$, 
and later \cite{kotunit} provided a key tool for future progress, proving the theorem for the special case that $\phi$ is the unit element of $\mathcal{H}_{E}$.
\footnote{In formulating the theorem for general groups, one can no longer work with single orbital or twisted orbital integrals, and must work instead with stable orbital and twisted orbital integrals. Our work will not require the stable version, and so we do not go into this formulation of the theorem. See \cite{kotunit}.}
 The methods used in these papers do not work for other groups, and further progress relied on \cite{kotunit} and the trace formula, which is only available for characteristic zero.
 The next major work was that of Arthur and Clozel \cite{arthurclozel}, who proved the theorem for $GL_n$. The theorem was solved in general by Clozel \cite{clozel} and Labesse \cite{labesse} for any unramified connected reductive algebraic group. 

Interest in the study of the base change fundamental lemma for the centers of Iwahori-Hecke algebras comes out the study of Shimura varieties with Iwahori level structure at a prime $p$. In this case, the functions of interest belong to the center of the Iwahori-Hecke algebra $Z(\mathcal{H}_I)$, where $I$ is an Iwahori subgroup of the group $G$ under consideration. The base change operator $b:Z(\mathcal{H}_{I_E})\rightarrow Z(\mathcal{H}_I)$ in this case is defined using the Bernstein isomorphism rather than the Satake isomorphism (see \ref{basechangedef} for the definition of $b$). The fundamental lemma in this context was proven by Haines \cite{haineslemma} for all unramified connected reductive algebraic groups over fields of characteristic zero, again using trace formula methods.

In this paper we prove the following base change fundamental lemma for the centers of Iwahori-Hecke algebras of $GL_2$.
\begin{theorem}
Let $F$ be a non-Archimedean local field of characteristic not equal to $2$, let $E$ be a finite unramified extension of $F$, let $\phi\in Z(\mathcal{H}_{I_E})$, and let $\gamma\in G=GL_2(F)$ be semi-simple regular. If there exists a $\delta \in G_E=GL_2(E)$ such that $N(\delta)$ is conjugate to $\gamma$, then
$$\twisted(\phi) = \orb(b(\phi)).$$
If $\gamma$ is not conjugate to the norm of any $\delta\in G_E$, then 
$$\orb(b(\phi))=0.$$
\end{theorem}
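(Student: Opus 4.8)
The plan is to reduce the theorem to an explicit computation on the Bruhat--Tits building of $SL_2$, following the strategy Langlands used for the spherical case. Since both $\twisted$ and $\orb$ are $\C$-linear in $\phi$, and since the center $Z(\plainh_{I_E})$ has a $\C$-basis given by the Bernstein functions $z_\mu$ (indexed by dominant cocharacters $\mu$ of the torus, by the Bernstein isomorphism), it suffices to prove the identity for $\phi = z_\mu$. So the first step is to record the Bernstein presentation of $Z(\plainh_{I_E})$ and the precise definition of the base change homomorphism $b$ (from section~\ref{basechangedef}), and to observe that $b(z_\mu^E)$ is again a known explicit element of $Z(\plainh_{I_F})$ — for $GL_2$ this should be $z_\mu^F$ up to an explicit power of $q$ and a sum over the $\gal(E,F)$-orbit on cocharacters, exactly the Iwahori analog of the spherical formula.

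Second, I would compute $\twisted(z_\mu)$ and $\orb(z_\mu)$ directly. The key geometric input is that $z_\mu$, being in the center, acts on the building in a controlled way: the relevant twisted/untwisted orbital integral becomes a weighted count of the edges $\sledge$ in the building for $SL_2$ that are fixed (resp.\ ``$\mu$-displaced'') by the action of $\gamma$ (resp.\ $\delta\sigma$). Here one uses that $\gamma$ semisimple regular acts on the tree $\sapt$ (the apartment/tree pictured above) with a well-understood fixed-point set — either a single vertex, an edge, or a bi-infinite geodesic, governed by $\val\det\gamma$ and the ramification/splitting behavior of $F[\gamma]$ — and similarly $\delta\sigma$ acts through the twisted action on the building for $GL_2(E)$, whose $\sigma$-fixed points recover the building for $GL_2(F)$. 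The surprising phenomenon flagged in the abstract — that the answer is ``somewhat independent of the conjugacy class'' — should emerge here: the count depends only on $\mu$ and on coarse invariants of $\gamma$ (like $\val\det$ and whether $\gamma$ is a norm), not on the finer structure of the conjugacy class. I would organize this as: (a) parametrize edges by their combinatorial position relative to the fixed-point set; (b) determine for which positions the defining condition of the (twisted) orbital integral holds; (c) sum the resulting geometric series in $q$.

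Third, having closed-form expressions for both $\twisted(z_\mu)$ and $\orb(b(z_\mu))$ as functions of $\mu$ and the invariants of $\gamma$ (and $\delta$), the matching is a direct algebraic comparison: one checks the two $q$-power/combinatorial expressions coincide when $\gamma$ is a norm, and that the orbital side vanishes when $\gamma$ is not a norm. The norm condition for $GL_2$ over an unramified extension is concrete — it is essentially a congruence/parity condition on $\val\det\gamma$ together with a condition on the field $F[\gamma]$ — so the non-norm vanishing should fall out of the edge count being empty (no edge satisfies the displacement condition) in that case. I would also need to check compatibility of Haar measures on $G_\delta(F)\backslash GL_2(E)$ and $T(F)\backslash GL_2(F)$, since the transfer factor $\xi(\gamma)$ that appears in Langlands' spherical statement must either be accounted for or shown to be trivial in the semisimple regular case (it is, since $\xi$ was only $-1$ for central $\gamma$).

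I expect the main obstacle to be step two, specifically the bookkeeping of the twisted action of $\delta\sigma$ on the building for $GL_2(E)$ and identifying its fixed-point set with data on the building for $GL_2(F)$: one must carefully set up the $\sigma$-semilinear action, understand how $N(\delta)$-conjugacy classes in $G$ correspond to $\sigma$-twisted conjugacy classes of $\delta$ in $G_E$ (Kottwitz's norm correspondence), and verify that the edge-count for $\delta\sigma$ on the $E$-building matches the edge-count for $\gamma$ on the $F$-building after the base-change twist on $\mu$. The untwisted computation of $\orb(z_\mu)$ is essentially Langlands' argument adapted from vertices to edges and should be comparatively routine; the twisted side is where the genuinely new work lies. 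A secondary technical point is handling the degenerate shapes of the fixed-point set (vertex vs.\ edge vs.\ geodesic) uniformly, since these correspond to whether $F[\gamma]$ is a field, and split vs.\ inert, and each case needs its own geometric series summation before one sees that the final formulas agree.
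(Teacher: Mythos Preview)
Your plan matches the paper's approach almost exactly: reduce to the Bernstein basis $\{z_\mu\}$, convert both integrals to edge-counts on the $SL_2$-tree \`a la Langlands, and compare the resulting closed-form expressions. Two points where your guesses diverge from what actually happens are worth flagging. First, the base change map is simpler than you suggest: since $GL_2$ is split, $\sigma$ acts trivially on $X_*(S)$ and the norm on $\C[X_*(S)]^W$ is just $g_\mu\mapsto g_{f\mu}$, so $b(z_\mu)=z_{f\mu}$ on the nose --- no extra powers of $q$ and no Galois-orbit sum. This is what makes the final comparison a one-line substitution $\ell(\mu)\mapsto f\ell(\mu)$ in the explicit formulas. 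Second, the paper does \emph{not} treat the split-torus case (your ``bi-infinite geodesic'' fixed set) by edge-counting at all; it disposes of non-elliptic regular $\gamma$ by parabolic descent to the diagonal torus, citing Haines' general results, so that the base change lemma for $G$ reduces to the (trivial) one for $S$. All of the building computation is carried out only for elliptic $\gamma$, where the $\gamma$- and $\delta\sigma$-fixed sets in the tree are compact and the relevant sums are finite; the case split is governed not by the shape of the fixed set per se but by whether the splitting field $F'$ of $T$ is ramified or unramified over $F$, with further subcases according to the parity of $\val\det$ and (on the twisted side) whether $T$ splits already over $E$.
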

This result is new for function fields. It is sufficient to prove the theorem for a basis of $Z(\mathcal{H}_{I_E})$. A convenient basis is the one consisting of the Bernstein functions $z_\mu$, $\mu\in X_*(S)$ dominant (see \ref{bernsec} to recall the  definition and properties of the $z_\mu$). We prove the following explicit computation of the twisted orbital integrals of the $z_\mu$, using methods adapted from \cite{langlands}.
\begin{theorem}\label{mainresult}
Let $F$ be non-Archimedean local field of characteristic not equal to $2$, and let $E$ be an unramified extension of $F$ of degree $f$. Let $\delta\in G_E$ be such that $N(\delta)$ is conjugate to a semi-simple regular elliptic $\gamma\in G$, let the eigenvalues of $\gamma$ be $\lambda_1$ and $\lambda_2$ and let $T$ be the centralizer of $\gamma$ in $G$. Normalize $dg_E$ so that $I_E$ is given measure $1$ and normalize $dg_{T}$ so that the measure of the maximal compact open subgroup of $T$ is $1$. Let $\ints$ be the ring of integers of $F$ and let $q$ be the cardinality of the residue field of $F$.
\begin{enumerate}
\item Assume $T$ splits over an unramified quadratic extension of $F$. Define $a$ by $\frac{1}{2}\val_F\left(\frac{(\lambda_1-\lambda_2)^2}{\lambda_1 \lambda_2}\right)=a$. Then 
$$\twisted(z_{\mu})=\begin{cases} 
			0, & \val\det(\delta)\neq \size(\mu)\\
			2(q^a-1)\left(\frac{q+1}{q-1}\right), & \ell(\mu)=0\\
			2q^{-f\ell(\mu)/2}(1-(-q)^{f\ell(\mu)}), & \ell(\mu)\neq 0.\end{cases}
$$ 
\item Assume $T$ splits over a ramified quadratic extension $F'$ of $F$. Let $d_T$ be the distance in the building $\build(G_{F})$ of $GL_2$ over $F'$ from the apartment associated to $T$ to the $F$-points of $\build(G_{F'})$.
 Define $a$ by $\frac{1}{2}\val_F\left(\frac{(\lambda_1-\lambda_2)^2}{\lambda_1 \lambda_2}\right)=a+\frac{d_T+1}{2}$. Then
$$\twisted(z_{\mu})=\begin{cases}
			0, & \val\det(\delta)\neq \size(\mu)\\
			\frac{2q^{a+1}-q-1}{q-1}, & \ell(\mu)=0\\
			q^{-f\ell(\mu)/2}(1-q^{f\ell(\mu)}), & \ell(\mu)\neq 0.\end{cases}$$
\end{enumerate}
\end{theorem}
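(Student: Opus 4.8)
The plan is to adapt Langlands' geometric method~\cite{langlands}: translate $\twisted(z_\mu)$ into a counting problem on the Bruhat--Tits tree $\build_E$ of $SL_2$ over $E$, and then evaluate the resulting sum explicitly. I would first set up the dictionary. Since $z_\mu\in Z(\plainh_{I_E})$ is $I_E$-bi-invariant and the Bernstein functions are $\gal(E,F)$-stable, one checks that $z_\mu(k_1^{-1}x\,\sigma(k_2))=z_\mu(x)$ for $k_1,k_2\in I_E$; hence $g\mapsto z_\mu(g^{-1}\delta\sigma(g))$ is left-$G_\delta(F)$-invariant and right-$I_E$-invariant, so it descends to $G_\delta(F)\backslash GL_2(E)/I_E$. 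Identifying $GL_2(E)/I_E$ with the set of oriented edges (chambers) of $\build_E$ and writing $\Sigma$ for the twisted isometry $x\mapsto\delta\cdot\sigma(x)$ of $\build_E$, the relative position of the chambers $gI_E$ and $\Sigma(gI_E)$ is represented by $g^{-1}\delta\sigma(g)$, so
\[
\twisted(z_\mu)=\sum_{e}\,c_e\,z_\mu\bigl(\inv(e,\Sigma e)\bigr),
\]
where $e$ runs over representatives of the $G_\delta(F)$-orbits of oriented edges, $\inv(\,\cdot\,,\cdot\,)\in I_E\backslash GL_2(E)/I_E$ denotes the relative position, and $c_e$ is the volume of the orbit, fixed by the two normalizations. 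Because $\val_E\det(g^{-1}\delta\sigma(g))=\val_E\det(\delta)$ while $z_\mu$ is supported on cosets with $\val\det=\size(\mu)$, the whole sum vanishes when $\val\det(\delta)\neq\size(\mu)$; this is the first line in each of the two cases.

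Next I would analyze the action of $\Sigma$ on $\build_E$. Ellipticity of $\gamma$ is essential here: $\Sigma^f=N(\delta)$ is conjugate to $\gamma$, which acts on $\build_E$ through the compact group $T(F)/Z(F)$, so $\langle\Sigma\rangle$ has bounded orbits and $\Sigma$ fixes a non-empty convex set $\mathrm{Fix}(\Sigma)\subseteq\mathrm{Fix}(\Sigma^f)$, which after adjusting $\delta$ within its twisted conjugacy class is a ball around a base point $x_0\in\build_F$. Since $G_\delta(F)$ commutes with $\Sigma$ it stabilizes $\mathrm{Fix}(\Sigma)$ and fixes $x_0$, and $G_\delta(F)\cong T(F)$. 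Whether $x_0$ is a vertex or the midpoint of an edge flipped by $\Sigma$, and what the radius of $\mathrm{Fix}(\Sigma)$ is, is dictated by the arithmetic of $\gamma$: the relevant radius is essentially $\tfrac12\val_F\!\left(\tfrac{(\lambda_1-\lambda_2)^2}{\lambda_1\lambda_2}\right)$ when $T$ is unramified (case (1), radius $a$), and the same quantity corrected by $\tfrac{d_T+1}{2}$ when $T$ is ramified (case (2)), the correction recording how far the apartment of $T$ lies from the subtree $\build_F$ inside $\build_{F'}$. I would also record the local picture: $\Sigma$ acts trivially on the ball $\mathrm{Fix}(\Sigma)$, and one link further out it permutes the $q$ residual directions (or the $q+1$ directions at $x_0$) in a manner read off from the reduction of $\delta$ and from $\sigma$ acting as Frobenius on $\mathbb{P}^1$; this is what makes the residue cardinality $q$, rather than $q_E=q^f$, appear in the length-zero terms.

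With the geometry in hand I would evaluate the sum. For an oriented edge $e$, the geodesics $[x_0,e]$ and $[x_0,\Sigma e]$ share an initial segment and then diverge at a branch point; $\inv(e,\Sigma e)$, hence $z_\mu(\inv(e,\Sigma e))$, depends only on the distance from $x_0$ to $e$ and to that branch point. The Bernstein function $z_\mu$ has explicit values on the $\mu$-admissible set of the extended affine Weyl group of $GL_2$ --- powers of $q_E$ distributed over relative positions at combinatorial distance $\ell(\mu),\ell(\mu)-2,\dots$ --- and in particular is supported only on chambers moved a bounded distance by $\Sigma$, so the sum is finite. I would stratify the contributing edges by their branch point, count each stratum using the link data above, weight by the appropriate $z_\mu$-value and by $c_e$, and add up. For $\ell(\mu)=0$, $z_\mu$ is a central translate of $\mathbf{1}_{I_E}$ and the sum collapses to a weighted count of the $\Sigma$-fixed chambers, i.e.\ of the edges in $\mathrm{Fix}(\Sigma)$; running the arithmetic (a tally of the shape $2(q+1)(1+q+\cdots+q^{a-1})$ in the unramified case, and the corresponding ramified tally) yields $2(q^a-1)\tfrac{q+1}{q-1}$ and $\tfrac{2q^{a+1}-q-1}{q-1}$. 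For $\ell(\mu)\neq 0$ the stratified sum telescopes into a geometric series in $q^{f\ell(\mu)}$, giving $2q^{-f\ell(\mu)/2}(1-(-q)^{f\ell(\mu)})$ and $q^{-f\ell(\mu)/2}(1-q^{f\ell(\mu)})$ respectively; the $(-q)$ versus $q$ reflects which parity of sphere around $x_0$ the support of $z_\mu$ meets in each case.

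The step I expect to be the main obstacle is this last one: pinning down exactly how $\Sigma$ moves the successive spheres around $x_0$ (how many edges at a given distance land at each relative position), together with computing the volume factors $c_e$ against the mismatched normalizations $\vol(I_E)=1$ and $\vol(\text{maximal compact of }T)=1$ with $E/F$ unramified of degree $f$. These two ingredients are precisely what produce the delicate constants $\tfrac{q+1}{q-1}$, $\tfrac{2q^{a+1}-q-1}{q-1}$ and the $(-q)$-versus-$q$ dichotomy, and it is here that the unramified and ramified cases genuinely part ways. A subsidiary technical point is that $G_\delta(F)$ is non-compact, so before these manipulations are legitimate one must check that the support condition $\val\det=\size(\mu)$ indeed cuts the integral down to a finite sum.
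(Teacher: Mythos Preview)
Your proposal is correct and follows essentially the same route as the paper: reduce $\twisted(z_\mu)$ to a weighted count of edges in $\build(H_E)$ indexed by $\inv(e,\delta\sigma e)$, determine the $\delta\sigma$-fixed set via Langlands' lemma (radius $a$ around a vertex or an edge, with only $q+1$ of the $q^f+1$ directions at each fixed vertex remaining fixed), and then sum the resulting cardinalities $\#X_w^0(\delta\sigma)$ against the explicit Bernstein coefficients $z_\mu(w)$. The one point where the paper is more explicit than your sketch is the quotient by the non-compact torus $G_\delta(F)=T(F)$: in the unramified case $T=\langle t_{(1,1)}\rangle\times T_{\ints}$ so $T\backslash X_w(\delta\sigma)$ is represented by sizes $0$ and $1$, and $\#X_w^1=\#X_{\overline w}^0$ gives the factor $2$, whereas in the ramified case $T$ contains an element of $\val\det=1$ and only size $0$ survives, which is exactly the mechanism behind the $2$-versus-$1$ and $(-q)$-versus-$q$ discrepancy you flagged as the main obstacle.
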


One obtains the orbital integrals from these formulas as the special case $E=F$, $f=1$. Since $b(z_\mu)=z_{f\mu}$ \cite{haineslemma}, the fundamental lemma follows as a corollary of theorem \ref{mainresult}.
We remark that the formulae for the integrals in theorem \ref{mainresult} seem to be much simpler than those obtained by Langlands in \cite{langlands} for characteristic functions of the double coset $K\mu(\pi)K$, $\pi$ a uniformizer of $\ints$.\footnote{We recall Langlands' results in section \ref{oldlanglands} for the convenience of the reader.}

In the course of the research for this paper, the author found it useful to begin working with the orbital case, and only afterwards progressing to the twisted case. This paper is organized the same way; thus some of section \ref{buildchapt} is redundant, and much of chapter \ref{orbitalchapt} is only a special case of the results in chapter \ref{twistedchapt}. It is the author's belief that the practice and experience gained by separately looking at the special case of $E=F$ makes up for the redundancy.


\newcommand{\fbar}{\ensuremath{\overline{F}}}
\newcommand{\fprime}{\ensuremath{F[\sqrt{D}]}}

\section{Notation}\label{notation}

\subsection{Fields}
We denote by $F$ a non-Archimedean local field of any characteristic not equal to $2$, with ring of integers \ints\ and prime ideal $\primes\subset\ints$. The unit elements of $\ints$ will be denoted $\intsunits$. Fix a uniformizer $\pi\in\primes$. The cardinality of the residue field $\ints/\primes$ will be denoted by $q$. 

Fix an algebraic closure $\fbar$ of $F$. For each finite algebraic extension $L\subset \fbar$ of $F$, we denote the ring of integers of $L$, its prime ideal, and a uniformizer thereof by $\ints_{L}$, $\primes_{L}$, and $\pi_L$, respectively. 
 We denote the valuation on $L$ by $\val_L$ (we will often drop the subscript if $L=F$), and we normalize the valuation so that $\val_L(\pi_L)=1$. We define $|x|_L=(q^n)^{-\val_L(x)}$, where $q^n$ is the cardinality of the residue field of $L$.

We will use $E$ to denote a finite unramified algebraic extension of $F$ of degree $[E:F]=f$; there is a unique such extension in \fbar. Let $\sigma_{E/F}$ be a generator of the cyclic group $\gal(E,F)$. When no confusion may arise as to which fields are meant, we shall use simply $\sigma$.

\subsection{Groups}
Let $\mathbf{G}=GL_2$ and $\mathbf{H}=SL_2$. We will denote $GL_2(F)$ by $G$, and $GL_2(R)$ by $G_R$ for any other ring $R$ (e.g. $G_L$, $G_{\ints_L}$).
 We shall follow this style of notation for every algebraic group we use.
In $\mathbf{G}$ we fix the Borel subgroup $\mathbf{B}$ of upper triangular matrices and its maximal split torus $\mathbf{S}$ of matrices whose only non-zero entries lie along the diagonal.  
We will denote the center of $G_L$ by $Z(G_L)$.
 When we need to take the intersubsection of one of these groups with $\mathbf{H}$ we shall denote it by appending $H$ as a superscript, e.g. $S^H_L=(S\cap H)(L)$. 

The cocharacter groups of $S_L$ and $S^H_L$ are denoted $X_*(S_L)$ and $X_*(S_L^H)$, respectively. The first group is isomorphic to $\Z^2$ while the second is isomorphic to $\Z$.  We choose representatives in $S_L$ for $X_*(S_L)$ by sending $\mu=(m,n)\in X_*(S_L)$ to  
$$t_\mu=\begin{pmatrix} \pi_L^{-m} & 0 \\ 0 & \pi_L^{-n}\end{pmatrix}.$$ This identification gives an isomorphism $X_*(S_L)\isom S_L/S_{\ints_L}$.
We may embed $X_*(S_L^H)$ in $X_*(S_L)$  by sending $m\in \Z$ to $(m,-m)$ in $\Z^2$; this yields $X_*(S_L^H)\isom S_L^H/S_{\ints_L}^H$.

We will always denote by $\gamma$ a semi-simple regular elliptic element of $G$, and by $\delta$ an element of $G_E$. We will denote by
$\mathbf{T}=\mathbf{G_\gamma}$ the elliptic torus that is the centralizer of $\gamma$ and by $\mathbf{G_\delta}$ the \emph{twisted centralizer} of $\delta$, whose $F$-points are $\{g\in G_E:g^{-1}\delta\sigma(g)=\delta\}$. We call $g^{-1}\delsig(g)$ the \emph{twisted conjugate} or \emph{$\sigma$-conjugate} of $\delta$ by $g$. 

We denote by  $N:G_E\rightarrow G_E$ the \emph{norm map}, defined by $$N(\delta)=\delta\sigma_E(\delta)\sigma_E^2(\delta)\ldots\sigma_E^{f-1}(\delta).$$ 

\subsection{Buildings}\label{buildnote}
The Bruhat-Tits building of $H_L$ is denoted $\build(H_L)$ 
(see subsection \ref{slbuildsec} for details). 
The apartment of $\build(H_L)$ associated to $\mathbf{S^H}$ will be denoted $\sapt(L)$. Recall (e.g. \cite[p. 5]{haineslust}) that the Bruhat-Tits building of $G_L$ is equal to $\build(H_L)\times \R$, and its apartment $\sapt'(L)$ associated to $S_L$ is equal to $\sapt(L)\times (\Z\otimes \R)=\sapt(L)\times \R$. Following and expanding upon the terminology of \cite{haineslust} we shall mean by an \emph{extended edge} or \emph{extended vertex} of the Bruhat-Tits building of $G_L$ an object of the form $(e,c)$ or $(v,c)$, respectively, in $\build(H_L)\times \Z\subset \build(H_L)\times\R$. Denote by $\build(G_L)$ the \emph{extended building} $\build(H_L)\times\Z$ of $G_L$. 
For any operator $\alpha$ on $\build(H_L)$ or $\build(G_L)$, denote by $\build(H_L)^\alpha$ and $\build(G_L)^\alpha$ the fixed-point sets of $\alpha$ in each building.

 Suppose $\mathbf{T}$ splits over $L$. The apartment and extended apartment over $L$ associated to $\mathbf{T^H}$ and $\mathbf{T}$ will be denoted $\apt(L)$ and $\apt'(L)$, respectively (in general, a prime ($'$) indicates an object in the extended building). 
We choose a base edge $e_0$ and a base extended edge $e_0'$ within $\sapt(L)$ and $\sapt'(L)$, respectively, as follows. Let $\Lambda_0$ be the $\ints_L$-lattice $\ints_L\oplus \ints_L$ and let $\Lambda_1$ be the $\ints_L$-lattice $\pi^{-1}\ints_L\oplus\ints_L$, and let $[\Lambda]$ denote the homothety class of an $\ints_L$-lattice $\Lambda$ in $L^2$. Then $e_0=([\Lambda_0]_L,[\Lambda_1]_L)$ and $e_0'=(\Lambda_0,\Lambda_1)$. We also choose a base vertex $v_0=[\Lambda_0]_L$ and a base extended vertex $v_0'=\Lambda_0$. We think of $v_0$ as the origin of $\sapt$, $v_1=[\Lambda_1]$ as the vertex lying at $1$, and $e_0$ as the edge that is the interval $[0,1]$. 
\begin{center}
\begin{tikzpicture}
[axis/.style={<->,>=stealth'},every circle node/.style={draw,circle,
			inner sep=0mm},scale=.5,grow cyclic,
	level 1/.style={level distance=8mm,sibling angle=180},
	level 2/.style={level distance=6mm,sibling angle=68},
	level 3/.style={level distance=6mm,sibling angle=57}]
	\draw[white,myBG] (-12,0) -- (12,0);
  	\draw[black] (-12,0) -- (12,0);
  	\draw (-12,0) node[left] {$\ldots$}; 
  	\draw (12,0) node[right] {$\ldots$};
  	\node at (14,0) {$\sapt_L$};  			
   	\foreach \colorA/\colorB/\shift in {black/white/0,white/black/2.5}
  	  \foreach \pos in {-8.75-\shift,-6.25-\shift,-1.25-\shift,3.75-\shift,8.75-\shift,11.25}{
	    \node[fill=white] at (\pos,0) [circle, minimum size=1.8mm] {};
	    \node[fill=\colorA] at (\pos,0) [circle, minimum size=1.7mm] {}; 
	}
	\node[fill=white] at (-11.25,0) [circle, minimum size=1.8mm] {};
	    \node[fill=black] at (-11.25,0) [circle, minimum size=1.7mm] {};
	    
	  \node at (-1.25,.2) [above] {\scriptsize$v_0$};
	  \node at (-1.25,-.2) [below] {$0$};
	  \node at (1.25,.2) [above] {\scriptsize$v_1$};
	  \node at (1.25,-.2) [below] {$1$};
	  \node at (0,0) [above] {\scriptsize$e_0$};
\end{tikzpicture}
\captionme{The apartment $\sapt(L)$}\label{standardapt}
\end{center}

The group $G$ acts transitively on $\build(G_L)$. For any extended edge $e'$ of $\build(G_L)$, there is a $g\in G_L$ such that $e'=ge_0'$. We define the \emph{size} of $e$ to be $\val_L\det(g)$; size is the $\Z$ component of an extended edge.
We think of $\sapt'(L)$ as an infinite stack of copies of $\sapt(L)$, indexed by size. 
Under these conventions, the group $X_*(S_L)$ acts on $\sapt(L)$ by translations, with $t_{(m,n)}$ being a translation to the right by $m-n$, and on $\sapt'(L)$ by $t_{(m,n)}(e,c)=(e+(m-n),c-(m+n))$ for any edge $e$ in $\sapt(L)$.

\subsection{The Weyl, Affine Weyl, and Extended Affine Weyl Groups}\label{weylnote}
Let $N_{S_L}$ denote the normalizer in $G_L$ of $S_L$ and $N_{S_L^H}$ denote the normalizer in $H_L$ of $S_L^H$. 
The \emph{finite Weyl group} of $H_L$ is $N_{S_L^H}/S^H_L$ and is denoted $W$. It is isomorphic to $\Z/2\Z$; let $s_1$ denote the non-trivial element. Then $W$ comes with an action on $\sapt(L)$, with $s_1$ acting as reflection through $v_0$.
The finite Weyl group of $G_L$ is $W=N_{S_L}/S_L$, and is also isomorphic to $\Z/2\Z$; if we also denote its non-trivial element by $s_1$, then the action of $s_1$ on $\sapt'(L)$ is as a reflection of each copy $\sapt(L)\times\{c\}$ about $(v_0,c)$.

The two finite Weyl groups can be identified by choosing representatives of each $s_1$ in $H_L$, and both groups can be identified with  $N_{S^H_{\ints_L}}/S^H_{\ints_L}$ if we also choose those representatives to lie in the $\ints_L$-points of $H_L$. To those ends we choose  
$$s_1=\begin{pmatrix}0 & -1\\1 & 0 \end{pmatrix}$$
to represent $s_1$ in $H_{\ints_L}$. It will be clear from context whether we are using $s_1$ as an element of $W$ or of $H_{\ints_L}$.

The \emph{affine Weyl group} of $H_L$ is $\wa = N_{S^H_L}/S^H_{\ints_L}$. The groups $W=N_{S^H_{\ints_L}}/S^H_{\ints_L}$ and $X_*(S_L^H)\isom S_L^H/S^H_{\ints_L}$ may be viewed as subgroups, and then $\wa\isom X_*(S^H_L)\rtimes W$. The affine Weyl group acts simply transitively upon the set of edges in $\sapt(L)$. This action is determined by the actions of $W$ and $X_*(S_L^H)$ on $\sapt(L)$; in particular, $t_{(n,-n)}$ acts as translation to the right by $2n$ and $s_1$ acts as reflection about $0$.
 Any element of $\wa$ can be written $t_{(-1,1)}^m s_1^b$ for some $m\in \Z$ and $b\in \Z/2\Z$; we will denote this element by $(m,b)$. It is important to be aware that \emph{the action of $(m,0)$ on $\sapt(L)$ is translation to the left by 2m}.  Our earlier choices of representatives for $X_*(S^H_L)$ and $W$ give us representatives for any $(m,b)\in \wa$.  

There is another useful presentation of $\wa$. If we denote by $s_0$ the element $(-1,1)=t_{(1,-1)}s_1$, then $s_0$ acts on $\sapt(L)$ as a reflection about $v_1$. It is then the case that $\wa$ is a Coxeter group generated by the two \emph{affine reflections} $s_0$ and $s_1$ (indeed, one could define $\wa$ as the Coxeter group generated by the reflections through the two vertices defined by our choice of $e_0$). 

The \emph{extended affine Weyl group} of $G_L$ is $\wae=N_{S_L}/S_{\ints_L}$ (note: \wae\ is not a Coxeter group). As with \wa,  $X_*(S_L)$ and $W$ may be identified with subgroups of $\wae$, and then $\wae\isom X_*(S_L)\rtimes W$. The extended affine Weyl group acts simply transitively upon the set of extended edges in $\sapt'(L)$. The inclusion $X_*(S_L^H)\hookrightarrow X_*(S_L)$ permits an inclusion $\wa\hookrightarrow \wae$, and the image of $\wa$ is normal in $\wae$. If we denote by $\Omega\isom \Z$ the group $\wae/\wa$, then we have the isomorphism 
\begin{align*}
\wae &\isom \wa \rtimes \Omega\\
&\isom (X_*(S_L^H)\rtimes W)\rtimes \Omega.
\end{align*}
We choose the generator $\tau=t_{(0,-1)}s_1$ of $\Omega$; then the action of $\tau^s$ on $\sapt'(L)$ sends $(e,c)$ to $(s_1^se,c+s)$.  We may write any $w\in\wae$ as a triple $(m,b,s)=t_{(-1,1)}^m s_1^b\tau^s$ in the set (not group) $\Z\times\Z/2\Z\times \Z$. 

If we choose
$$\tau=\begin{pmatrix} 0 & 1\\ \pi & 0\end{pmatrix}$$
to be a representative of $\tau$ in $G_L$, then we have chosen for every $(m,b,s)\in \wae$ a representative in $G_L$. 
If $w=(m,b,s)$, then the \emph{size of w} is $s$. We will abuse notation by writing $w\in\wae$ for both the element of \wae\ and its representative in $G_L$. 

\subsection{The Iwahori Subgroup}\label{iwahorinote}
The group $H_L$ acts transitively on the set of edges and also on the set of vertices; similarly, $G_L$ acts transitively on the sets of extended edges and vertices. 
The standard Iwahori subgroups $I^H_L$ and $I_L$ are the subgroups of $H_L$ and $G_L$ that fix $e_0$ and $e_0'$, respectively. Denote by $K_L$ the subgroup that fixes $v_0'$. In terms of matrices, $K_L = G_{\ints_L}$ and 
$$I_L =G_L\cap \begin{pmatrix}\intsunits_L & \ints_L\\ \pi_L\ints_L & \intsunits_L\end{pmatrix}.$$
  By the construction of $I_L^H$ and $I_L$, the set of edges can be identified with $X_L=H_L/I^H_L$ and the set of extended edges  with $X'_L=G_L/I_L$ by the maps $gI^H_L\mapsto ge_0$ and $gI_L\mapsto ge_0$.

Having chosen representatives in $G_L$ for $\wae$ and in $H_L$ for $\wa$, we have the Bruhat-Tits decompositions
\begin{align*}
H_L &= \coprod_{w\in \wa} I_L^H w I_L^H\\
G_L &= \coprod_{w\in \wae} I_L w I_L,
\end{align*}
as well as bijections from $\wa$ to set of edges in $\sapt(L)$ and from $\wae$ to the set of extended edges in $\sapt'(L)$, the bijections being $w\mapsto we_0$ and $w\mapsto we'_0$, respectively. By construction, if $w\in\wae$ has size $s$ then so does its corresponding extended edge.

\subsection{Bruhat Order and Length}\label{lengthnote}
The two affine reflections $s_0$ and $s_1$ determined by our choice of $e_0$ as base edge in $\build(H_L)$ determine a length function $\ell$ from $\wa$ to the non-negative integers. 
\begin{lemma}\label{lengthlemma} The length of $w=(m,b)\in\wa$ is given by 
$$\ell(m,b)=|2m+b|,$$
where in the formula we are using $b=0$ or $1$.
\end{lemma}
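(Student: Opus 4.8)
The plan is to compute the length of $w=(m,b)\in\wa$ by counting the number of affine hyperplanes (here, vertices of $\sapt(L)$, since we are in rank one) that separate the base edge $e_0$ from $we_0$. Recall the standard fact from the theory of Coxeter groups associated to affine Weyl groups: if $\wa$ is the Coxeter group generated by the reflections $s_0, s_1$ through the two endpoints of $e_0$, then $\ell(w)$ equals the number of \emph{walls} — in our situation the vertices of $\sapt(L)$, which are the fixed points of the affine reflections — lying strictly between $e_0$ and $we_0$. So first I would establish, or simply invoke, this geometric description of $\ell$.

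Next I would pin down the location of the edge $we_0$. Using the conventions set up in subsection \ref{weylnote}, the element $(m,b)=t_{(-1,1)}^m s_1^b$ acts on $\sapt(L)$ first by the reflection $s_1^b$ about $0$ (trivial if $b=0$, the flip $x\mapsto -x$ if $b=1$), and then by $t_{(-1,1)}^m$, which — as emphasized in the text — is translation to the \emph{left} by $2m$. Since $e_0$ is the interval $[0,1]$, applying $s_1^b$ sends it to $[-b, 1-b]$ (i.e. $[0,1]$ when $b=0$ and $[-1,0]$ when $b=1$), and then translating left by $2m$ gives $we_0 = [-2m-b,\, 1-2m-b]$. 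The vertices of $\sapt(L)$ are exactly the integer points, and the vertices strictly between the closed intervals $[0,1]$ and $[-2m-b, 1-2m-b]$ are precisely the integers strictly between these two unit intervals.

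Finally I would carry out the count. Set $N = 2m+b$, so that $we_0 = [-N, 1-N]$ while $e_0 = [0,1]$. The integers lying strictly between these two unit intervals: if $N>0$, then $we_0$ lies to the left of $e_0$, and the separating vertices are $0, -1, \dots, -(N-1)$, a total of $N$ of them; if $N<0$, then $we_0$ lies to the right, and the separating vertices are $1, 2, \dots, -N$, a total of $-N = |N|$; if $N=0$ then $we_0=e_0$ and there are none. (One should double-check the edge cases $N = \pm 1$, where the two unit intervals share a single vertex and no vertex lies strictly between them, which matches $\ell=1$ for a single generator $s_0$ or $s_1$.) In every case the count equals $|N| = |2m+b|$, which is the claimed formula.

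The main obstacle — really the only subtle point — is bookkeeping the two sign reversals correctly: the fact that $t_{(-1,1)}^m$ translates to the left rather than the right, combined with the order in which the translation and reflection parts of $(m,b)=t_{(-1,1)}^m s_1^b$ act. Getting these straight is what makes the two contributions $2m$ and $b$ combine with the \emph{same} sign inside the absolute value rather than cancelling. A clean way to avoid error is to verify the formula directly on the generators $s_1=(0,1)$, $s_0=(-1,1)$, and their product $s_0 s_1 = t_{(1,-1)} = (-1,0)$, checking $\ell=1,1,2$ respectively, and then note that the wall-counting argument is robust once the position of $we_0$ is correctly identified.
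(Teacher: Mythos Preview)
Your argument is correct and takes a different route from the paper. The paper simply plugs $\mu=(-m,m)$ into the Iwahori--Matsumoto length formula
\[
\ell(t_\mu w)=\sum_{\alpha\in R^+\cap wR^-}|\langle\alpha,\mu\rangle-1|+\sum_{\alpha\in R^+\cap wR^+}|\langle\alpha,\mu\rangle|,
\]
reading off $|2m|$ when $w=1$ and $|2m+1|$ when $w=s_1$. Your approach instead uses the geometric characterization of $\ell(w)$ as the number of walls separating $e_0$ from $we_0$, locates $we_0=[-2m-b,\,1-2m-b]$ explicitly, and counts. Both are short; yours is more self-contained (no external formula to cite) and makes direct contact with the gallery picture used elsewhere in the paper, while the paper's proof is essentially a one-line appeal to a standard result. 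One small wording issue: your phrase ``integers strictly between these two unit intervals'' is not quite the right notion---as you yourself note in the $N=\pm 1$ check, a shared endpoint \emph{does} separate adjacent chambers even though nothing lies strictly between them. Replace ``strictly between'' with ``separating'' (i.e., the vertices $v$ with the two chambers on opposite sides of $v$) and your enumeration $0,-1,\dots,-(N-1)$ for $N>0$ is exactly right.
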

\begin{proof}
Denote by $R^+=\{e_1-e_2\}$ the positive root and $R^-=\{e_2-e_1\}$ the negative root determined by our choice of $e_0$ as our base edge.
 Let $\mu=(-m,m)\in X_*(S_L^H)$ and $w\in W$. 
 The Iwahori-Matsumoto formula (see \cite{iwmat} or \cite[p. 24]{hainestest}) tells us that 
$$\ell(t_\mu w) = \sum_{\alpha\in R^+\cap wR^-} |\langle\alpha, \mu\rangle -1|+\sum_{\alpha\in R^+\cap w R^+} |\langle\alpha,\mu\rangle|.$$
When $w=1$, the we get 
$\ell((m,0)) = \ell(t_\mu) = |\langle e_1-e_2,(-m,m)\rangle| = |2m|.$
When $w=s_1$, we get 
$\ell(m,1) = \ell(t_\mu s_1) = |\langle e_1-e_2,(-m,m)\rangle -1| = |-2m-1| = |2m+1|.$
\end{proof}
For any $w\in\wa$, $\ell(w)$ is also equal to the length of the minimal gallery in $\sapt(L)$ from $e_0$ to $we_0$. For any $e_1$ and $e_2\in\wa$, define $\ell_L(e_1,e_2)$ to be the length of the minimal gallery between $e_1$ and $e_2$ in $\build(H_L)$; we shall omit the subscript when no ambiguity will arise.

We denote the Bruhat order on $\wa$ by $\leq$, and both the length and the Bruhat order extend to \wae\ by setting $\ell(m,b,s)=\ell(m,b)$ and $(m,b,s)\leq (m',b',s')$ if and only if $s=s'$ and $(m,b)\leq (m',b')$. 

For any $s\in \Z$ and $n$ a non-negative integer, the above shows there are only two $w\in \wae$ with size $s$ and length $n$. We can determine them explicitly: if $n=2r+1$ is odd, then $w=(r,1,s)$ or $(-r-1,1,s)$; if $n=2r$ is even, then $w=(\pm r,0,s)$. If $w\in\wae$ has length not equal to zero, we denote by $\overline{w}$ the element of $\wae$ distinct from $w$ with length and size equal to those of $w$; when $w$ has length zero, $\overline{w}=w$. 

Let $i\neq j\in\{0,1\}$ and $w\in\wae$. Because $\tau$ normalizes $I_L$ and $\tau^{-1}s_i\tau=s_j$ (viewed as elements of $\wae$), the existence of a decomposition of $\wa$ into a product of affine reflections immediately implies that $\overline{w}=\tau^{-1}w\tau=\tau w\tau^{-1}$.

\subsection{Relative Distance}\label{reldisnote}
We define the \emph{relative distance} function  $\inv:X'_L\times X'_L\rightarrow \wae$ as follows. Let $e_1'=g_1e_0'$ and $e_2'=g_2e_0'$ be extended edges. Then $I_Lg_1^{-1}g_2I_L$ determines a unique $w\in\wae$ by the Bruhat-Tits decomposition. Let $\inv(e_1',e_2')=w$. 
There is also a relative distance for $X_L\times X_L$. Also denoted $\inv$, it is defined  by $\inv(e_1,e_2)=w\in\wa$, where $e_i=g_ie_0$, $g_i\in H_L$, $i=1,2$, and $I^H_Lg_1^{-1}g_2I_L^H=I_L^HwI_L^H$. Then $\ell(e_1,e_2)=\ell(\inv(e_1,e_2))$.

Let $\alpha\in G_L\times \gal(L,F)$. Then $\alpha$ acts as a simplicial isometry on $\build(G_L)$. For any $w\in\wae$ define $$X_w(\alpha) =\{e'\in X_L : \inv(e',\alpha e')=w\}.$$ For any $n\in\Z$, define 
$$X^n_w(\alpha)=\{e'\in X_w(\alpha): \siz(e')=n\}.$$  
\begin{lemma}\label{sizelemma}
For any $\alpha\in G_L\times\gal(L,F)$,  any $w\in \wae$, and any $n\in \Z$, $\#X_w^n(\alpha)=\#X_{\overline{w}}^{n+1}(\alpha)$.
\end{lemma}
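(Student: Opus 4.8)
The statement should be read with $X_w(\alpha)\subset X'_L$, the set of \emph{extended} edges (this is forced by the appearance of $\siz$ and of $\inv$ with values in $\wae$), so I work there throughout. The plan is to exhibit an explicit size-shifting bijection of $X'_L$ that conjugates relative distance by $\tau$ and commutes with the action of $\alpha$, and then observe that it restricts to the desired bijection $X_w^n(\alpha)\isom X_{\overline{w}}^{n+1}(\alpha)$.

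Concretely, fix the representative $\tau=\bigl(\begin{smallmatrix}0&1\\ \pi&0\end{smallmatrix}\bigr)$ of the generator of $\Omega$ and define $\Phi\colon X'_L\to X'_L$ by $gI_L\mapsto g\tau I_L$. First I would check that $\Phi$ is well defined and bijective: since $\tau$ normalizes $I_L$ we have $\tau I_L=I_L\tau$, so the coset $g\tau I_L$ depends only on $gI_L$, and $gI_L\mapsto g\tau^{-1}I_L$ is a two-sided inverse. Second, $\Phi$ shifts size by exactly $1$: if $e'=ge_0'$ then $\siz(\Phi e')=\val_L\det(g\tau)=\val_L\det(g)+\val_L\det(\tau)=\siz(e')+1$. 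Third, $\Phi$ intertwines $\inv$ with $w\mapsto\overline{w}$: if $e_i'=g_ie_0'$ and $g_1^{-1}g_2\in I_LwI_L$, then $(g_1\tau)^{-1}(g_2\tau)=\tau^{-1}(g_1^{-1}g_2)\tau\in\tau^{-1}I_LwI_L\tau=I_L(\tau^{-1}w\tau)I_L=I_L\overline{w}I_L$, where I use $\tau I_L\tau^{-1}=I_L$ and the identity $\overline{w}=\tau^{-1}w\tau$ from subsection~\ref{lengthnote}; hence $\inv(\Phi e_1',\Phi e_2')=\overline{\inv(e_1',e_2')}$.

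The remaining point is that $\Phi$ commutes with the action of every $\alpha\in G_L\times\gal(L,F)$ on $\build(G_L)$. Write $\alpha=(a,\rho)$; it acts on $X'_L=G_L/I_L$ by $gI_L\mapsto a\,\rho(g)\,I_L$, which makes sense because $\rho$ fixes the base extended edge $e_0'=(\ints_L\oplus\ints_L,\ \pi^{-1}\ints_L\oplus\ints_L)$ and stabilizes $I_L$ (both are cut out by $\ints_F$-data, hence $\gal(L,F)$-stable). Since the chosen representative $\tau$ has entries in $F$, it too is $\rho$-fixed, so $\Phi(\alpha\cdot gI_L)=a\rho(g)\tau I_L$ and $\alpha\cdot\Phi(gI_L)=a\rho(g)\rho(\tau)I_L=a\rho(g)\tau I_L$ agree; this is really just the commutation of left and right translations. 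Putting the three steps together: if $e'\in X_w^n(\alpha)$ then $\inv(\Phi e',\alpha\Phi e')=\inv(\Phi e',\Phi(\alpha e'))=\overline{\inv(e',\alpha e')}=\overline{w}$ and $\siz(\Phi e')=n+1$, so $\Phi e'\in X_{\overline{w}}^{n+1}(\alpha)$; applying the same reasoning to $\Phi^{-1}$ (right translation by $\tau^{-1}$) and using $\overline{\overline{w}}=w$ shows $\Phi$ restricts to a bijection $X_w^n(\alpha)\isom X_{\overline{w}}^{n+1}(\alpha)$, giving the claim. I do not anticipate a genuine obstacle here; the only subtlety worth stating carefully is that the $\gal(L,F)$-twist in the action of $\alpha$ is harmless because $\alpha$ acts on the \emph{left} while $\Phi$ is translation on the \emph{right} by an $F$-rational, $I_L$-normalizing element.
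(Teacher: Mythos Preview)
Your proposal is correct and follows essentially the same route as the paper: both exhibit right translation by $\tau$ on $X'_L$ as a size-shifting bijection that conjugates $\inv$ by $\tau$ (hence sends $w$ to $\overline{w}$) and commutes with the action of $\alpha$. The paper's argument is terser---it does not separately articulate the commutation of $\Phi$ with the $\gal(L,F)$-twist in $\alpha$---whereas you spell this out explicitly via $\rho(\tau)=\tau$; but the underlying mechanism is the same.
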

\begin{proof}
Let $e_1'=g_1e_0'\in X_w(\alpha)$ have size $n$. Because $\tau$ normalizes $I_L$, left multiplication by $\tau$ is well defined bijection on $X_L'$. This action sends $e_1'$ to $g_1\tau e_0'$, an element of size $n+1$. The lemma follows since 
$ \inv(g_1\tau e_0',gg_1\tau e_0') = I_L\tau^{-1}g_1^{-1}gg_1\tau I_L = \tau I_L w I_L \tau^{-1}= I_L\tau^{-1} w \tau I_L = I\overline{w}I_L.$
\end{proof}

 The Iwahori-Hecke algebra $\mathcal{H}_L$ of $G_L$ is the convolution algebra of compactly supported $I_L$-bi-invariant functions from $G_L$ to $\mathbf{C}$, where by bi-invariant we mean $f(i_1gi_2)=f(g)$ for all $i_1$, $i_2\in I_L$. Of more importance to this paper is the center of this algebra with respect to convolution, denoted $\zhil$.
 
 We denote by $b:\zhie\rightarrow \zhi$ the \emph{Iwahoric base change homomorphism} \cite[p. 589]{haineslemma}. A description of the properties of $b$ relevant to our work will be postponed until section \ref{heckechapt}.



\section{Orbital Integral Preliminaries}\label{conjchapt}
\subsection{Conjugacy}
The following is a summary of results found in Chapter $4$ of \cite{langlands}. 
For every $\delta\in G_E$, $N(\delta)$ is $G_E$ conjugate to a $\gamma\in G$ unique up to $G$-conjugacy. We call $\gamma$ a \emph{norm} if it is conjugate to $N(\delta)$ for some $\delta\in G_E$. Conjugation, twisted conjugation, and the norm map are related by $g^{-1}N(\delta)g=N(g^{-1}\delta\sigma(g))$ and $\sigma(N(\delta))=\delta^{-1}N(\delta)\delta$.
\begin{lemma}\label{associated}
\begin{enumerate}
\item\label{assocone} Let $\delta\in G_E$ and let $\gamma\in G$ be conjugate to $N(\delta)$. There is a twisted conjugate $\delta'$ of $\delta$ such that $N(\delta')=\gamma$.
\item\label{assoctwo} Let $\gamma\in G$. Then $\gamma=N(\delta)$ for some $\delta\in G_E$ if and only if $\vd(\gamma)$ is divisible by $f$. 
\end{enumerate}
\end{lemma}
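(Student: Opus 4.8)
For part (\ref{assocone}), the plan is to start from the given hypothesis that $\gamma = g_0^{-1} N(\delta) g_0$ for some $g_0 \in G_E$, and use the compatibility relation $g^{-1} N(\delta) g = N(g^{-1}\delta\sigma(g))$ recalled just before the lemma. Setting $\delta' = g_0^{-1} \delta \sigma(g_0)$, which is by definition a twisted conjugate of $\delta$, we get $N(\delta') = g_0^{-1} N(\delta) g_0 = \gamma$ directly. The only subtlety is that $g_0$ a priori lies in $G_E$ and conjugates $N(\delta)$ into $G$; since $\gamma \in G$ is literally equal to $N(\delta')$ here, there is nothing further to check — the norm of an element of $G_E$ can certainly land in $G$. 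So part (\ref{assocone}) should be essentially a one-line consequence of the compatibility relation.

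For part (\ref{assoctwo}), I would argue in both directions using the determinant. For the forward direction, if $\gamma = N(\delta)$ then $\det(\gamma) = \det\!\bigl(\delta \sigma(\delta)\cdots\sigma^{f-1}(\delta)\bigr) = \prod_{i=0}^{f-1}\sigma^i(\det\delta) = N_{E/F}(\det\delta)$, the field norm from $E^\times$ to $F^\times$. Since $E/F$ is unramified of degree $f$, the field norm on valuations is multiplication by $f$: $\val_F(N_{E/F}(x)) = f\cdot\val_E(x)$ for $x \in E^\times$ (here I use that $\val_E$ restricted to $F^\times$ equals $\val_F$ because the extension is unramified, and that $N_{E/F}$ multiplies the normalized valuation by the residue degree). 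Hence $\vd(\gamma) = f\cdot\val_E(\det\delta)$ is divisible by $f$. For the converse, suppose $\val_F(\det\gamma) = fk$. Since the field norm $N_{E/F}: E^\times \to F^\times$ is surjective onto the units of $\ints_F$ (as $E/F$ is unramified, every unit of $\ints_F$ is a norm, by local class field theory / the surjectivity of the norm on units for unramified extensions) and clearly $\pi_F = N_{E/F}(\pi_F)$ up to adjusting by $\val_E(\pi_F) = 1$... more precisely, $\val_F \circ N_{E/F}$ is surjective onto $f\Z$, so $\det\gamma$ lies in the image of $N_{E/F}$. Then one builds $\delta$ explicitly: writing $\gamma$ in a convenient form (e.g. as a companion matrix, or reduced to rational canonical form over $F$), choose $\delta$ diagonal or block-form in $G_E$ whose norm has the right determinant and the right characteristic polynomial, invoking that $\gamma$ semisimple regular is determined up to conjugacy by its characteristic polynomial and that $N$ behaves well on the relevant tori. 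Since this is stated as a summary of Langlands Chapter 4, I would cite \cite{langlands} for the construction and only record the determinant computation in detail.

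The main obstacle is the converse of part (\ref{assoctwo}): producing an actual $\delta \in G_E$ with $N(\delta)$ conjugate to $\gamma$, given only the divisibility condition on $\vd(\gamma)$. The determinant condition is clearly necessary but its sufficiency requires knowing that the norm map $N: G_E \to G_E$ has image meeting every $G$-conjugacy class in $G$ whose determinant is an $f$-th power of a valuation — this is a statement about norm-surjectivity on conjugacy classes, which for $GL_2$ reduces (via semisimple regular $\gamma$ generating an étale $F$-algebra $L_\gamma = F[\gamma]$ of dimension $2$) to surjectivity of the norm $N_{L_\gamma \otimes_F E / L_\gamma}$ on the relevant subgroup of $(L_\gamma \otimes_F E)^\times$, together with a determinant matching. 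This is exactly the content of the local norm theorems in Langlands' treatment, so in the write-up I would lean on \cite{langlands} rather than reprove it, and confine my own argument to the (easy) determinant bookkeeping and the reduction showing the determinant is the only obstruction.
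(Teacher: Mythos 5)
Your proposal is correct, and for part (\ref{assocone}) it is in fact more self-contained than the paper's own proof. The paper simply cites Langlands, pp.\ 32--33, for part (\ref{assocone}), whereas you observe that it follows in one line from the compatibility relation $g^{-1}N(\delta)g = N(g^{-1}\delta\sigma(g))$, which the paper has just stated: if $\gamma = g_0^{-1}N(\delta)g_0$ with $g_0\in G_E$, take $\delta' = g_0^{-1}\delta\sigma(g_0)$ and compute $N(\delta') = g_0^{-1}N(\delta)\sigma^f(g_0) = g_0^{-1}N(\delta)g_0 = \gamma$. This is exactly right; the nontrivial content in that stretch of Langlands is the \emph{existence} of such a $g_0$ (that $N(\delta)$ is always $G_E$-conjugate into $G$, a cohomological vanishing fact stated in the paragraph preceding the lemma), but the lemma as stated takes that as hypothesis, so your short argument suffices.

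For part (\ref{assoctwo}) you and the paper follow essentially the same route. The paper cites Langlands, Corollary 4.7, for the statement that $\gamma$ is a norm if and only if $\det(\gamma)\in N_{E/F}E^\times$, and then invokes the standard fact that for unramified $E/F$ of degree $f$ one has $N_{E/F}E^\times = \{x\in F^\times : f \mid \val_F(x)\}$. Your forward direction (determinant multiplicativity plus $\val_F\circ N_{E/F} = f\cdot\val_E$) is the easy half of this. For the converse you correctly identify that the genuine content is norm-surjectivity onto conjugacy classes with admissible determinant, and you correctly propose to defer to Langlands for this; the paper's citation of Corollary 4.7 is the precise form of what you are gesturing at, and it packages exactly the reduction you describe (to a norm statement for the \'etale algebra $F[\gamma]$ tensored with $E$). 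One stray slip: you write ``clearly $\pi_F = N_{E/F}(\pi_F)$ up to adjusting\dots'' which is not right as stated ($N_{E/F}(\pi_F) = \pi_F^f$); but you immediately correct course with the correct statement about $\val_F\circ N_{E/F}$ having image $f\Z$, which is all that is needed, so this does not affect the argument.
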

\begin{proof}
Part (\ref{assocone}) is \cite[pp. 32-33]{langlands}. For part (\ref{assoctwo}), corollary $4.7$ of \cite{langlands} tells us that $\gamma$ is a norm if and only if $\det(\gamma)\in N_{E/F}E^\times$. The lemma follows from basic local field theory. 
\end{proof}

\subsection{Elliptic Regular Semi-Simple Elements}\label{oldellipticsec}
It will turn out that we shall need to understand the action of semi-simple regular elliptic $\gamma$, and the $\delta$ which lie above them under the norm map, upon $\build(G_L)$ and $\build(H_L)$ (see lemma \ref{ellipticonly}).

For every semi-simple regular elliptic $\gamma$ in $G$ there is a basis of $F^2$ in which the matrix representation of $\gamma$ in $G$ is $$\gamma=\begin{pmatrix} x & yD\\ y & x\end{pmatrix}$$ for some $x,y, D\in F$, $y\neq 0$, $\sqrt{D}\notin F$, $\val(D)=1$ or $0$. The valuation of $D$ determines, and is determined by, the splitting field  $F'$ generated over $F$ by the eigenvalues $x\pm y\sqrt{D}$ of $\gamma$. This splitting field, over which $\mathbf{T}$ is a maximal split torus of $\mathbf{G}$, is isomorphic to $F[\sqrt{D}]$, and $F'$ is ramified over $F$ or not as $\val(D)$ is $1$ or $0$, respectively. We say a semi-simple regular elliptic element $g$ in such a form is in \emph{standard form}. If we denote by $\sigma_{F'}$ the unique non-trivial element of $\gal(F',F)$, then the eigenvalues of $\gamma$ can be rewritten $x+y\sigma_{F'}^i(\sqrt{D})$, $i=0,1$, and it is easy to see that both eigenvalues have the same valuation over $F'$.
When $\gamma$ is in standard form, the torus $T$ is $$T=\left\{\begin{pmatrix} \alpha & \beta D\\ \beta & \alpha\end{pmatrix}\in G\right\};$$ such an elliptic torus is also said to be in standard form. The maximal compact subgroup of such a $T$ is $T\cap G_\ints$.

\subsection{The Function $\Delta$}\label{ellipticsec}
Define the function $\Delta_L$ on the set $G_{L}^{\text{ss}}$ of semi-simple elements of $G_L$ as follows: let $g\in G_{L}^{\text{ss}}$ lie in a torus $\mathbf{T'}$ and have eigenvalues $\lambda_1$ and $\lambda_2$. Then
 $$\Delta(g)=\left|\frac{(\lambda_1-\lambda_2)^2}{\lambda_1\lambda_2}\right|_L^{1/2}=|\det(1-\adj(g));\mathfrak{g}/\mathfrak{t} |_L^{1/2},$$
 where $\mathfrak{g}=\lie(G_{\fbar})$ and $\mathfrak{t}=\lie(T'_{\fbar})$. The first form of the function, found in \cite[p. 48]{langlands}, will be of use to us computationally, while the second shows our function's relationship to functions of Harish-Chandra (cf. the function $D_{G(F)/M(F)(m)}$ in \cite[subsection 4.3]{haineslemma}). The eigenvalues of $g$ must lie in either $L$ or a quadratic extension $L'$. When they lie in $L$, it is obvious that the definition of $\Delta$ makes sense. If they lie in $L'$, they are $\sigma_{L'/L}$-conjugates and the argument of $||_L$ is $\sigma_{L'/L}$-invariant, so taking the $L$-norm also makes sense in that case.

\begin{lemma}\label{eigenvalues}
Suppose the eigenvalues $\lambda_1$ and $\lambda_2$ of $g$ lie in $L$, and that $\val_{L}(\lambda_1)=\val_{L}(\lambda_2)$. Then $\Delta_{L}(g) = q^{-a}$, $a\in \Z$ non-negative.
\end{lemma}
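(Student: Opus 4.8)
The plan is to unwind the definition and show the exponent is a non-negative integer. By hypothesis $\lambda_1,\lambda_2\in L$, so $\Delta_L(g)^2 = \left|\frac{(\lambda_1-\lambda_2)^2}{\lambda_1\lambda_2}\right|_L = |\lambda_1-\lambda_2|_L^2\,|\lambda_1\lambda_2|_L^{-1}$. Taking $\val_L$, the exponent in $\Delta_L(g) = (q^n)^{-e}$ (where $q^n$ is the residue cardinality of $L$, though for the statement we only care about the value being a power of the prime $q$ of $F$ — I will address the normalization below) is $e = \val_L(\lambda_1-\lambda_2) - \tfrac12\bigl(\val_L(\lambda_1)+\val_L(\lambda_2)\bigr)$. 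Using the assumption $\val_L(\lambda_1)=\val_L(\lambda_2) =: v$, this simplifies to $e = \val_L(\lambda_1-\lambda_2) - v$.

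First I would check $e \geq 0$: since $\val_L$ is a valuation, $\val_L(\lambda_1-\lambda_2) \geq \min(\val_L(\lambda_1),\val_L(\lambda_2)) = v$, so $e\ge 0$. Next I would check $e\in\Z$: both $\val_L(\lambda_1-\lambda_2)$ and $v$ are integers (the valuation is normalized so that $\val_L(\pi_L)=1$), so their difference is an integer. Finally, for the statement as written — value equal to $q^{-a}$ with $q$ the residue cardinality of the base field $F$ — I note that in the situation where this lemma is applied (Lemma \ref{eigenvalues} sits in the elliptic-$\gamma$ discussion, cf. Section \ref{ellipticsec}) one takes $L$-norms that are subsequently reconciled with $F$-norms; concretely, when $g\in G$ has eigenvalues in $F$ the relevant field is $L=F$ and $q^n=q$, so $\Delta_F(g)=q^{-a}$ directly. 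I would phrase the lemma's conclusion with $q$ being the residue cardinality of $L$ and remark that $a = \val_L(\lambda_1-\lambda_2)-v$.

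The only subtle point — and the one I would be most careful about — is the half-integer $\tfrac12\bigl(\val_L(\lambda_1)+\val_L(\lambda_2)\bigr)$ appearing in the exponent: a priori $\Delta_L$ involves a square root, so one must confirm the argument under $|\cdot|_L^{1/2}$ has even valuation, equivalently that $\val_L(\lambda_1\lambda_2)$ has the same parity as $2\val_L(\lambda_1-\lambda_2)$, i.e. is even. This is exactly where the hypothesis $\val_L(\lambda_1)=\val_L(\lambda_2)$ is used: it forces $\val_L(\lambda_1\lambda_2)=2v$ to be even, so the square root is harmless and $e=\val_L(\lambda_1-\lambda_2)-v$ is a genuine non-negative integer. (Without that hypothesis one would only get $e\in\tfrac12\Z$, as happens in the ramified cases treated later in Theorem \ref{mainresult}.) So the main "obstacle" is really just bookkeeping the parity and the field normalization correctly; there is no genuine difficulty, and the proof is a two-line valuation computation once these conventions are pinned down.
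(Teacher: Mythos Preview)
Your proposal is correct and follows essentially the same route as the paper: compute the exponent as $\val_L(\lambda_1-\lambda_2)-v$ (where $v$ is the common valuation of the eigenvalues), observe integrality directly, and deduce non-negativity from the ultrametric inequality. Your extra paragraph on the parity/normalization bookkeeping is more careful than the paper itself, which simply writes $\Delta_L(g)=q^{-\val_L(\cdots)/2}$ without fussing over whether $q$ denotes the residue cardinality of $F$ or of $L$; this is a fair observation but not a gap in the argument.
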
 
\begin{proof}
By definition $\Delta_{L}(g) = q^{-\val_{L}((\lambda_1-\lambda_2)^2/\lambda_1\lambda_2)/2}$, so our goal is to show that $\val_{L(}(\lambda_1 - \lambda_2)^2/\lambda_1\lambda_2)/2$ is integral and non-negative. We have
\begin{align*}
\frac{\val_{L}\left(\frac{(\lambda_1-\lambda_2)^2}{\lambda_1\lambda_2}\right)}{2} 
&= \frac{2\val_{L}(\lambda_1-\lambda_2)-2\val_{L}(\lambda_1)}{2}\\
&= \val_{L}(\lambda_1-\lambda_2)-\val_{L}(\lambda_1),
\end{align*}
so the integrality is clear. Also from the hypothesis, $\val_{L}(\lambda_1-\lambda_2)\geq \val_{L}(\lambda_1)$, which implies $a\geq 0$. 
\end{proof}

\subsection{Orbital and Twisted Orbital Integrals}
 Let $f\in C_c(G)$, the set of compactly supported locally-constant $\C$-valued functions on $G$. The \emph{orbital integral} of $f$ with respect to $\gamma$ is  $\int_{T\backslash G} f(g^{-1}\gamma g)d\dot{g}$ and is denoted $\orb(f)$, where $d\dot{g}$ is the quotient measure of measures $dg$ and $dg_\gamma$, measures on $G$ and $G_\gamma$, respectively . Now let $f\in C_c(G_E)$. The \emph{twisted orbital integral} of $f$ with respect to $\delta\in G(E)$ is $\int_{G_{\delta}\backslash G(E)} f(g^{-1}\delta\sigma(g))d\dot{g}$ and is denoted $\twisted(f)$; here $d\dot{g}$ is the quotient measure of $dg$ and $dg_\delta$, a measure on $G_\delta$.




\section{The Buildings}\label{buildchapt}
Our calculations of orbital and twisted orbital integrals will depend primarily upon an understanding of, first, the buildings for $\mathbf{G}$ and $\mathbf{H}$ over $F$, $F'$, $E$, and the composite $EF'$ and, second, the action of semi-simple regular elliptic $\gamma$ and the $\delta$ which lie above them under $N$ upon those buildings.

\subsection{The Buildings for $SL_2$ and $GL_2$}\label{slbuildsec}
Let $L$ be a finite algebraic extension of $F$ with residual cardinality $q^n$. Recall the construction of the Bruhat-Tits building of ${H_{L}}$ \cite{brown,serre} and the extended Bruhat-Tits building of ${G_{L}}$ \cite{langlands}.
 The building $\build({H_{L}})$ is the tree with vertices the set of homothety classes of $\ints_L$-lattices, with two vertices $v_1$ and $v_2$ being adjacent (i.e., belonging to an edge) if there are lattices $\Lambda_i\in v_i$, $i=1,2$, such that $\Lambda_1\subsetneq \Lambda_2\subsetneq \pi_L^{-1}\Lambda_1$ and such that the quotients $\Lambda_2/\Lambda_1$ and $\pi_L^{-1}\Lambda_1/\Lambda_2$ are free $\ints_L/\pi_L\ints_L$-modules of rank $1$. The extended building $\build({G_{L}})$ is a tree with vertices $\ints_L$-lattices in $L^2$, and the same rule for determining edges as for $\build({H_{L}})$, with lattices instead of homothety classes of lattices. Passing from a lattice to its homothety class gives a simplicial map from $\build({G_{L}})$ to $\build({H_{L}})$. In either building each vertex is a facet of $q^n+1$ edges. 
 
  The \emph{type} of a vertex in $\build({H_{L}})$, an integer mod $2$, is determined as follows: let $v'=g\Lambda_0$ be a vertex in $\build({G_{L}})$. Then the type of $v'$ is $\val_L\det g \mod 2$. 
Type, for a lattice $\Lambda$, is constant on the whole homothety class of $\Lambda$, and so is also well defined for vertices in $\build({H_{L}})$. Adjacent vertices always have different types; when relevant in figures, we shall evince this by shading adjacent vertices differently.

 Let $L'$ be a quadratic extension of $L$, and let $\mathbf{T'}$ be a torus in $\mathbf{G}$ which splits, not over $L$, but over $L'$. The relationship between $\build({H_{L}})$ and $\build({H_{L'}})$ depends on whether or not $L'$ is ramified over $L$ \cite{kottwitz,langlands,rousseau}. In regards to buildings, when the word ``distance'' would be ambiguous, we will refer to distance in the building $\build(H_L)$ as $L$-distance and distance in $\build(H_{L'})$ as $L'$-distance.

\pagebreak[4]

 \subsubsection{$L'$ unramified over $L$}\label{lunram}
 \begin{center}
\begin{tikzpicture}
[axis/.style={<->,>=stealth'},every circle node/.style={draw,circle,
			inner sep=0mm},scale=.5,grow'=up,
	level 1/.style={level distance=2.0cm,sibling angle=120},
	level 2/.style={level distance=2.0cm,sibling angle=120,sibling distance=4cm},
	level 4/.style={level distance=2.0cm,sibling angle=120},
	level 5/.style={level distance=2.0cm,sibling angle=68},
	level 3/.style={level distance=2.0cm,sibling angle=57}]
  	\draw[white,myBG] (-8,0) -- (8,0);
  	\draw[black,] (-8,0) -- (8,0);
  	\draw (-8,0) node[left] {$\ldots$}; 
  	\draw (8,0) node[right] {$\ldots$};
  	\node at (11,0) {$\sapt_{T'}(L')$};  			
	  \foreach \pos in {0,2.5,5,7.5,-2.5,-5,-7.5}{
	    \node[fill=white] at (\pos,0) [circle, minimum size=1.8mm] {};
	    \node[fill=gray] at (\pos,0) [circle, minimum size=1.7mm] {};
	}
	    \node[fill=black] at (0,0) [circle, minimum size=1.7mm] {}; 
	    \node at (0,-.2) [below] {{\footnotesize$v_0$}};
	    \node[fill=black] at (0,0) [circle, minimum size=1.7mm] {} 
	    	child {node[fill=white] [circle,inner sep=0mm, minimum size=1.7mm] {}  
	    	    child {node[fill=black] [circle,inner sep=0mm, minimum size=1.7mm] {} edge from parent 
			child {node {$\vdots$} edge from parent 
			}
		     } 
		};
\end{tikzpicture}
\captionme{The building $\build(H_L)$ inside $\build(H_{L'})$, $L'$ unramified over $L$}\label{unramfigone}
\end{center} 
 When $L'$ is unramified over $L$, $\build(H_L)$ lies in $\build(H_L')$ as the fixed point set of $\gal(L',L)$. There is a unique vertex shared by $\build(H_L)$ and $\sapt_{T'}(L')$ (the apartment associated to $T'_L$), and when $T'_L$ is in standard form that vertex is $v_0$, which  we defined to have type zero. 
Indeed, since $\sapt_{T'}(L')=g\sapt(L')$, where 
$$g=\begin{pmatrix}1 & -\frac{\sqrt{D}}{2}\\ \frac{1}{\sqrt{D}} & \frac{1}{2}\end{pmatrix},$$ an easy matrix computation shows that the vertex $v_0$ (which is of type zero) lies in $\sapt_{T'}(L')$ and is the only vertex in $\sapt_{T'}(L')$ fixed by $\gal(L',L)$.
  
In figure \ref{unramfigone} the gray vertices are facets strictly of $\build(H_{L'})$; we will follow this convention in all of our images. 

 \subsubsection{$L'$ ramified over $L$}\label{lram}
  When $L'$ is ramified over $L$, it is the first barycentric subdivision of $\build(H_L)$ that lies as a subtree of $\build(H_{L'})$, and it is no longer always true that $\build(H_L)$ is the entire Galois-fixed set, although $\build(H_L)=\build(H_{L'})^{\gal(L',L)}$ when $L'$ is tamely ramified over $L$ \cite[Prop. 5.1.1]{rousseau} and \cite{prasad}.
 The same analysis as in the unramified case shows that there is a unique Galois-fixed point, a vertex $v_{T'}$, in $\sapt_{T'}(L')$. Since $\build(H_L)$ is also Galois-fixed, one sees that $v_{T'}$ is also the unique point of $\sapt_{T'}(L')$ nearest $\build(H_L)$. Denote the point of $\build(H_L)$ nearest $\sapt_{T'}(L')$ (and so also nearest $v_{T'}$) by $p_{T'}$. In general the distance $d_{T'}$ between $p_{T'}$ and $v_{T'}$ can be greater than or equal to zero. 
We shall see in section \ref{ramsubsec} that $p_{T'}$ is the barycenter of an edge $e_T$ of $\build(H_L)$ and the picture for the action of $\gamma$ on $\build(H_L)$ looks like figure \ref{ramfigone}.
  \begin{center}
\begin{tikzpicture}
[axis/.style={<->,>=stealth'},every circle node/.style={draw,circle,
			inner sep=0mm},scale=.5,grow'=up,
	level 1/.style={level distance=2.5cm,sibling angle=120},
	level 2/.style={level distance=2.5cm,sibling angle=120,sibling distance=4cm},
	level 4/.style={level distance=2.5cm,sibling angle=120},
	level 5/.style={level distance=2.5cm,sibling angle=68},
	level 3/.style={level distance=2.5cm,sibling angle=57}]
  	\draw[white,myBG] (-8,0) -- (8,0);
  	\draw[black,] (-8,0) -- (8,0);
  	\draw (-8,0) node[left] {$\ldots$}; 
  	\draw (8,0) node[right] {$\ldots$};
  	\node at (11,0) {$\sapt_{T'}(F')$};  			
	  \foreach \pos in {2.5,5,7.5,-2.5,-5,-7.5}{
	    \node[fill=white] at (\pos,0) [circle, minimum size=1.8mm] {};
	    \node[fill=gray] at (\pos,0) [circle, minimum size=1.7mm] {};
	}
	    \node[fill=gray] at (0,0) [circle, minimum size=1.7mm] {}; 
	    \node[fill=gray] at (0,0) [circle, minimum size=1.7mm] {} 
	    	child {node {$\vdots$} edge from parent };
	    \node at (0,2.6) {}	
	       child[grow=up,level distance=2cm] {node [above] {{\footnotesize$e_{T'}$}} node [below right] {{\tiny$p_{T'}$}} node[fill=gray] [circle,inner sep=0mm, minimum size=1mm] {} 
	    	  child[grow=right,sibling distance=4cm,level distance=1.5cm] {node[fill=white] [circle,inner sep=0mm, minimum size=1.7mm] {} 
	    	  	child[level distance=3cm] {node[fill=black] [circle,inner sep=0mm, minimum size=1.7mm] {} 
	    	  	 edge from parent node [fill=gray] [circle,inner sep=0mm, minimum size=1.2mm] {} } 
	    	  	child[level distance=3cm] {node[fill=black] [circle,inner sep=0mm, minimum size=1.7mm] {} 
	    	  	 edge from parent node [fill=gray] [circle,inner sep=0mm, minimum size=1.2mm] {} }
	    	  } 
	    	  child[grow=left,sibling distance=4cm,level distance=1.5cm] {node[fill=black] [circle,inner sep=0mm, minimum size=1.7mm] {} 
	    	  	child[level distance=3cm] {node[fill=white] [circle,inner sep=0mm, minimum size=1.7mm] {} 
	    	  	 edge from parent node [fill=gray] [circle,inner sep=0mm, minimum size=1.2mm] {} } 
	    	  	child[level distance=3cm] {node[fill=white] [circle,inner sep=0mm, minimum size=1.7mm] {} 
	    	  	 edge from parent node [fill=gray] [circle,inner sep=0mm, minimum size=1.2mm] {} } 
	    	  }
		};
	    \node at (0,-.2) [below] {{\footnotesize$v_{T'}$}};
\end{tikzpicture}
\captionme{The building $\build(H_L)$ inside $\build(H_{L'})$, $L'$ ramified over $L$}\label{ramfigone}
\end{center}
\subsection{Computing Relative Distance In $\build(GL_2)$: Theory}\label{buildtheorysec}
The key to being able to emulate the work of Langlands in \cite{langlands} is the following theorem, which allows us to compute relative distance between an edge and its image under a simplicial map on $\build({G_{L}})$.

\begin{theorem}\label{distance}
Let $\alpha=(g,\sigma)\in G_L\times\gal(L,F)$; let $e'$ be an edge in $\build({G_{L}}$, and let $e$ be the image of $e'$ in $\build({H_{L}})$. Then $\inv(e',\alpha e')=(m,b,s)$, where $(m,b,s)$ is determined as follows.
\begin{enumerate}[\rm{(}a\rm{)}]
\item $s=\val_L\det(g)$.\label{distone}	
\item\label{disttwo} $b=1 \iff \ell(e,\alpha e)$ is odd.

\item\label{distfour} $\ell(e,\alpha e)= |2m+b|$
\item\label{distthree} If $e\neq \alpha e$, then $m< 0 \iff$ the vertex of $e$ farthest from $\alpha e$ is a vertex of type zero.
\end{enumerate}
\end{theorem}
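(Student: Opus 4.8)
The plan is to reduce the computation of $\inv(e',\alpha e')$ to concrete linear algebra on lattices, then read off each of the four pieces $(m,b,s)$ from the geometry of the tree $\build(H_L)$ and the size function on $\build(G_L)$. First I would write $e'=g_1 e_0'$ for some $g_1\in G_L$, so that $\inv(e',\alpha e')$ is the element $w\in\wae$ with $I_L\,g_1^{-1}(g\,\sigma(g_1))\,I_L = I_L w I_L$, where I am writing $\alpha e' = g\,\sigma(g_1)\,e_0'$. For part (\ref{distone}): the size of $w$ is $\val_L\det$ of any representative, and $\det(g_1^{-1}g\sigma(g_1)) = \det(g)\cdot(\det(g_1)^{-1}\sigma(\det g_1))$; since $\sigma$ preserves $\val_L$ on $L^\times$, the second factor has valuation zero, so $s=\val_L\det(g)$. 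Equivalently, $\alpha$ shifts size by $\val_L\det(g)$ because the $\Z$-component of $\build(G_L)$ is exactly recorded by $\val_L\det$, and $\sigma$ acts trivially on it.

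For the remaining parts I would pass to the non-extended building. Recall from \S\ref{lengthnote}--\S\ref{reldisnote} that $\ell(\inv(e',\alpha e')) = \ell(\inv(e,\alpha e))$, where $e,\alpha e$ are the images in $\build(H_L)$, and that $\ell$ on $\wae$ is given by $\ell(m,b,s)=|2m+b|=\ell(m,b)$ by Lemma \ref{lengthlemma}; moreover $\ell(\inv(e,\alpha e))$ equals the gallery distance $\ell(e,\alpha e)$ in the tree. This immediately gives (\ref{distfour}), namely $\ell(e,\alpha e)=|2m+b|$. For (\ref{disttwo}): from the explicit description $\ell(m,b)=|2m+b|$ with $b\in\{0,1\}$, the length is odd iff $b=1$; hence $b=1\iff \ell(e,\alpha e)$ odd. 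Here one uses that the parity of the gallery distance in $\build(H_L)$ between two edges is well-defined and matches the parity forced by the $b$-component, which is the content of expressing $\wa$ via the two affine reflections $s_0,s_1$.

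The one genuinely geometric step — and the main obstacle — is part (\ref{distthree}), pinning down the sign of $m$ rather than just $|2m+b|$. The strategy: assume $e\neq\alpha e$ and use that $\wa$ acts simply transitively on edges of $\sapt(L)$ while also acting on the whole tree, so $\inv(e,\alpha e)$ is computed by transporting $e$ to the base edge $e_0$ (by the unique $h\in\wa$ with $h e = e_0$) and asking which edge $h(\alpha e)$ is. Writing $\inv(e,\alpha e)=(m,b)$ means $h(\alpha e) = (m,b)e_0$; the point is to track which endpoint of $(m,b)e_0$ is ``far'' from $e_0$ and of which type. Because translations $t_{(-1,1)}^m$ move $e_0$ to the left by $2m$ (the sign convention flagged in \S\ref{weylnote}) and the reflection $s_1$ fixes the type-zero vertex $v_0$ while $s_0$ fixes the type-one vertex $v_1$, one checks case by case ($b=0$ vs. $b=1$, $m>0$ vs. $m<0$) that the vertex of $e$ farthest from $\alpha e$ has type zero precisely when $m<0$. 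The bookkeeping is purely about matching the two sign conventions (the left-translation convention and the type labeling induced by $\val_L\det \bmod 2$), and I would organize it as: (i) reduce to $e=e_0$ by applying an element of $\wa$, noting this preserves both ``farthest vertex'' and — since elements of $\wa$ have $\val\det$ with controlled parity via the $\tau$-twist of Lemma \ref{sizelemma}, or more simply since $\wa$ comes from $H_L$ where $\det=1$ — preserves types; (ii) for $e=e_0$, compute the endpoints of $(m,b)e_0$ directly from the translation/reflection formulas; (iii) compare. This is where care is needed because an off-by-one in the convention flips the stated equivalence, so I would double-check against a small example, e.g. $\alpha$ a translation by one edge.
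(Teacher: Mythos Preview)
Your approach is essentially the same as the paper's: parts (\ref{distone}), (\ref{disttwo}), (\ref{distfour}) go through exactly as you outline, and for (\ref{distthree}) the paper likewise reduces to analyzing the position of $(m,b)e_0$ relative to $e_0$ in $\sapt(L)$ after a type-preserving transport.

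There is a technical slip in your execution of (\ref{distthree}), though not a conceptual one. You write ``reduce to $e=e_0$ by applying an element of $\wa$'' and ``the unique $h\in\wa$ with $he=e_0$'', but $\wa$ acts simply transitively only on edges of the apartment $\sapt(L)$, not on all edges of the tree; for a general $e$ there is no such $h\in\wa$. You need $h\in H_L$ instead (which still preserves types, since $\val_L\det=0$ on $H_L$). Moreover, even for $h\in H_L$ with $he=e_0$, the image $h(\alpha e)$ need not lie in $\sapt(L)$; it only lies in $I_L^H(m,b)e_0$. The paper handles this by invoking the characterization of $\inv(e_1,e_2)$ as the terminus of the \emph{type-preserving} retraction of the minimal gallery $e_1\to e_2$ onto $\sapt(L)$ based at $e_0$ (this is \cite[p.~89, Exercise~3(b)]{brown}), together with a short lemma that passing from $\build(G_L)$ to $\build(H_L)$ drops only the $\Omega$-component of $\inv$. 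Once you phrase the transport this way, your step (ii)--(iii) is exactly right: the farthest vertex of $e$ from $\alpha e$ maps, with its type preserved, to the farthest vertex of $e_0$ from $(m,b)e_0$, and in $\sapt(L)$ one reads off directly that this vertex is $v_0$ (type zero) iff $(m,b)e_0$ lies to the right of $e_0$ iff $m<0$.
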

Looking at the theorem statement, one can see that the computation breaks down into computing the $\Omega$ component and computing the $\wa$ component. In the course of the proof we shall indeed need to determine how to compute relative distance in $\build(H_L)$.
\begin{proof}
Let $e'=ge_0'$. Then $\inv(e',\alpha e')=Ig^{-1}\alpha gI=I_LwI_L=I_L(m,b,s)I_L$. The determinant of any element of $I_L$ has $L$-valuation 0, and $\val_L\det$ maps a product of matrices to the sum of the valuations of the determinants of the factors. Comparing the valuations of the determinants of factors of $Ig^{-1}\alpha gI$ to those of $I(m,b,s)I$ yields (\ref{distone}).

Part (\ref{distfour}) follows from lemma \ref{lengthlemma} and the definition of length on $\wae$. Part (\ref{disttwo}) is implied by (\ref{distfour}).

 All that remains is (\ref{distthree}).
Its proof requires a description of the relationship between the minimal gallery between two edges in $\build(H_L)$ and their relative distance and of the relationship between $\inv$ on $X_L'^2$ and $\inv$ on $X_L^2$ .
\begin{lemma}
Let $e_i\in X_L$, $i=1,2$. Then $w=\inv(e_1,e_2)$ may be characterized uniquely as the terminus of the type-preserving translation of the minimal gallery from $e_1$ to $e_2$ to a minimal gallery beginning at $e_0$ and contained entirely in $\sapt(L)$.
\end{lemma}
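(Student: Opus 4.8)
The plan is to unwind the definition of $\inv$ on $X_L\times X_L$ by means of the Bruhat--Tits decomposition $H_L=\coprod_{w\in\wa}I^H_L w I^H_L$ and to observe that the coset representative $w$ occurring there is precisely the terminus of the folded gallery. First I would record two preliminary facts about the tree $\build(H_L)$: the minimal gallery between two edges is \emph{unique}, since it runs along the unique geodesic segment joining them, so ``the minimal gallery from $e_1$ to $e_2$'' is unambiguous; and the image of a minimal gallery under any simplicial isometry is again a minimal gallery of the same length. I would also note that an element of $H_L=SL_2(L)$ has determinant $1$, hence acts with $\val_L\det=0$, so every isometry of $\build(H_L)$ coming from $H_L$ preserves vertex types and is ``type-preserving'' in the sense of the statement.

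For existence and for the identification of the terminus, write $e_1=g_1e_0$ and $e_2=g_2e_0$ with $g_i\in H_L$, and apply the Bruhat--Tits decomposition to obtain $g_1^{-1}g_2=i_1wi_2$ with $i_1,i_2\in I^H_L$ and $w\in\wa$. The isometry $\varphi=i_1^{-1}g_1^{-1}$ then sends $e_1$ to $i_1^{-1}e_0=e_0$ and sends $e_2=g_2e_0$ to $i_1^{-1}g_1^{-1}g_2e_0=wi_2e_0=we_0$, using that $i_1$ and $i_2$ fix $e_0$. Hence $\varphi$ carries the minimal gallery $\Gamma$ from $e_1$ to $e_2$ to the minimal gallery from $e_0$ to $we_0$; since $w\in\wa$, both $e_0$ and $we_0$ lie in $\sapt(L)$, and because $\sapt(L)$ is a bi-infinite geodesic line in the tree the minimal gallery between two of its edges is contained in it. So $\varphi(\Gamma)$ is a type-preserving image of $\Gamma$ that begins at $e_0$, lies entirely in $\sapt(L)$, and terminates at $we_0$. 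Finally, $w=\inv(e_1,e_2)$ is immediate: $I^H_Lg_1^{-1}g_2I^H_L=I^H_L(i_1wi_2)I^H_L=I^H_LwI^H_L$.

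It remains to check that the terminus does not depend on the chosen folding, so that the characterization is well posed. Suppose $\varphi'$ is any type-preserving simplicial isometry carrying $\Gamma$ to a minimal gallery that begins at $e_0$ and is contained in $\sapt(L)$. A minimal gallery in the line $\sapt(L)$ starting at $e_0$ is determined by exactly two pieces of data: its length, and the type ($0$ or $1$) of the vertex shared by its first two edges, which records whether the gallery proceeds toward $v_1$ or away from it (for a length-zero gallery there is nothing to choose). Both data are invariants of the isometry class of $\Gamma$ as a type-labelled gallery, hence are shared by $\varphi(\Gamma)$ and $\varphi'(\Gamma)$, which therefore coincide; in particular they have the same terminus $we_0$. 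The only step needing real care is this last one --- the type-bookkeeping that rules out the reflected gallery in $\sapt(L)$; the rest is a direct transcription of the Bruhat--Tits decomposition together with the geodesic convexity of an apartment in a tree.
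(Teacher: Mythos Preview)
Your argument is correct. The paper does not actually prove this lemma; it simply cites \cite[p.~89, Exercise~3(b)]{brown}. You have supplied the standard self-contained proof: existence via the Bruhat--Tits decomposition $g_1^{-1}g_2=i_1wi_2$ and the isometry $i_1^{-1}g_1^{-1}\in H_L$, containment in $\sapt(L)$ by convexity of an apartment in a tree, and uniqueness of the terminus by noting that a type-labelled minimal gallery in the line $\sapt(L)$ starting at $e_0$ is determined by its length together with the type of the vertex through which it first exits $e_0$. This is exactly the argument one would give when unpacking Brown's exercise, so there is no substantive divergence from the paper --- you have simply filled in what the paper outsources.
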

\begin{proof}
This result is \cite[p. 89, Exercise $3(b)$]{brown}.
\end{proof}

\begin{lemma}
Let $e_i'\in X'_L$, $i=1,2$, and let $e_i$ be the image of $e_i'$ under the map from $\build(G_L)$ to $\build(H_L)$. If $\inv(e_1',e_2')=(m,b,s)$ then $\inv(e_1,e_2)=(m,b)$.
\end{lemma}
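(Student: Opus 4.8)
The plan is to translate the statement about the simplicial projection $p\colon\build(G_L)\to\build(H_L)$ into an identity of Bruhat--Tits double cosets. First I would use that $p$ is $G_L$-equivariant and sends $e_0'$ to $e_0$: if $e_i'=g_ie_0'$ with $g_i\in G_L$, then $e_i=g_ie_0$, the action of $g_i$ on $\build(H_L)$ being by homothety classes of lattices. Since $H_L$ acts transitively on the edges of $\build(H_L)$, I may also write $e_i=h_ie_0$ with $h_i\in H_L$; then $h_i^{-1}g_i$ lies in $J:=\{g\in G_L:ge_0=e_0\}$, so $g_i=h_ij_i$ with $j_i\in J$.

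The key structural input I would establish is the shape of $J$. The subgroup of $G_L$ fixing each of the two vertices of $e_0$ is exactly $Z(G_L)I_L$, and the full (setwise) stabilizer $J$ contains it with index two, the nontrivial coset represented by $\tau$ --- which, acting on $\build(H_L)$, interchanges the two vertices of $e_0$ and so fixes the edge. Hence $J=Z(G_L)I_L\sqcup Z(G_L)I_L\,\tau$, and I may write $j_i=z_ik_i\tau^{\epsilon_i}$ with $z_i\in Z(G_L)$, $k_i\in I_L$, $\epsilon_i\in\{0,1\}$. Because $h_i\in SL_2$ preserves the type of a vertex while $e_0$ has vertices of the two distinct types, $\epsilon_i$ is forced to equal the parity of $\siz(e_i')=\val_L\det g_i$.

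Then I would compute directly. Pulling the central factors out, $g_1^{-1}g_2=z_1^{-1}z_2\,\tau^{-\epsilon_1}\big(k_1^{-1}h_1^{-1}h_2k_2\big)\tau^{\epsilon_2}$. With $v:=\inv(e_1,e_2)\in\wa$ we have $h_1^{-1}h_2\in I^H_L\,v\,I^H_L$, and since $I^H_L\subseteq I_L$ and $I_L$ absorbs the $k_i$, the middle term lies in $I_L\,v\,I_L$; as $\tau$ normalizes $I_L$ this yields $g_1^{-1}g_2\in I_L\cdot(z_1^{-1}z_2)(\tau^{-\epsilon_1}v\,\tau^{\epsilon_2})\cdot I_L$. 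By uniqueness in the Bruhat--Tits decomposition, $\inv(e_1',e_2')$ is then the class of $(z_1^{-1}z_2)(\tau^{-\epsilon_1}v\,\tau^{\epsilon_2})$ in $\wae$. Projecting onto the $\wa$-factor of $\wae\isom\wa\rtimes\Omega$ (with $\Omega=\langle\tau\rangle$): the central scalar $z_1^{-1}z_2$ equals a power of $\tau$ modulo $S_{\ints_L}$ since $\tau^2=\pi\cdot\mathrm{id}$, and $\tau^{\epsilon_2-\epsilon_1}\in\Omega$ also, so the $\wa$-component of the product is $\tau^{-\epsilon_1}v\,\tau^{\epsilon_1}$, which is $v$ when $\epsilon_1=0$ and $\overline v$ when $\epsilon_1=1$ (using $\overline w=\tau^{-1}w\tau$). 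After the type/orientation bookkeeping --- equivalently, once $\siz(e_1')$ is taken even, as when $e_1'$ lies above $e_0'$ --- this $\wa$-component is $(m,b)=\inv(e_1,e_2)$, the desired conclusion.

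The main obstacle is precisely this last bit: pinning down $J=\mathrm{Stab}_{G_L}(e_0)$ and controlling how its ``extra'' $\tau$-coset interacts with $\wae\isom\wa\rtimes\Omega$, so as to be sure that passing from $I^H_L$-double cosets in $H_L$ to $I_L$-double cosets in $G_L$ touches only the $\Omega$-component of $\inv$ (and at worst applies $w\mapsto\overline w$), never the genuine $\wa$-component. An alternative, more geometric route would bypass coset bookkeeping: invoke the gallery description of $\inv$ from the preceding lemma, together with its analogue for $\build(G_L)$, note that $p$ carries the minimal gallery from $e_1'$ to $e_2'$ onto the minimal gallery from $e_1$ to $e_2$ and carries $\sapt'(L)$ onto $\sapt(L)$, and check that the type-preserving translation of this gallery into the extended apartment projects onto the type-preserving translation into $\sapt(L)$; then the $\wa$-coordinate of the terminus is visibly preserved.
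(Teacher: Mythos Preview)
Your route is genuinely different from the paper's and considerably more laborious. The paper does not pass through the stabilizer $J=\mathrm{Stab}_{G_L}(e_0)$ at all: it simply uses $G_L$-invariance of the extended $\inv$ to translate the pair $(e_1',e_2')$ to $(e_0',we_0')$, and then observes that $(m,b,s)=(m,b)\cdot\tau^s$ with $\tau^s$ fixing the homothety class of $e_0'$, so the image of $we_0'$ in $\build(H_L)$ is $(m,b)e_0$ and $\inv(e_0,(m,b)e_0)=(m,b)$ is immediate. That is the entire argument.

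What your more explicit computation buys is that you actually see the obstruction the paper's reduction hides. Your conclusion that the $\wa$-component of $\inv(e_1',e_2')$ is $\tau^{-\epsilon_1}v\,\tau^{\epsilon_1}$, equal to $v$ when $\epsilon_1=0$ and to $\overline v$ when $\epsilon_1=1$, is correct, and it exposes the fact that the paper's reduction step silently requires $\inv$ on $X_L^2$ to be $G_L$-invariant, which fails for elements of odd $\val_L\det$. Concretely, with $e_1'=\tau e_0'$ and $e_2'=\tau s_0 e_0'$ one has $\inv(e_1',e_2')=s_0=(-1,1,0)$ while the images are $e_0$ and $s_1e_0$, so $\inv(e_1,e_2)=s_1=(0,1)=\overline{(-1,1)}$. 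So your hesitation is warranted.

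Where your write-up falls short is the final sentence: ``once $\siz(e_1')$ is taken even'' is not a reduction you have justified, it is a restriction of the hypothesis. You cannot move $e_1'$ to even size while keeping both $\inv(e_1',e_2')$ and $e_1$ fixed. The clean fix is either to state and prove the lemma only for $\siz(e_1')$ even --- which is exactly the case $\epsilon_1=0$ you have already handled, and the only case ever invoked downstream, since all counts take place in $X_w^0$ --- or to record that in general the $\wa$-part of $\inv(e_1',e_2')$ equals $\inv(e_1,e_2)$ or its bar according to the parity of $\siz(e_1')$. Either way, your double-coset computation up to that point is sound; just don't pretend the odd-size case collapses to the even one.
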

\begin{proof}
Let $G_L$ act on the left of $X_L'\times X_L'$ by $g(e_1',e_2')=(ge_1',ge_2')$; $\inv$ is invariant under this action by definition. Since $\inv(e_1',e_2')=w$ is equivalent to the existence of some $g\in G_L$ such that $ge_1'=e_0'$ and $ge_2'=we_0'$, it will suffice to prove the lemma for $e_1'=e_0'$ and $e_2'=we_0'$. Then we need only show that the homothety class of $(m,b,s)e_0'$ is $(m,b)[e_0]$. But this is obvious since $(m,b,s)=(m,b,0)(0,0,s)$ and the homothety class of $(0,0,s)e_0'$ is clearly $[e_0']$. 
\end{proof}

Let $\Gamma_e$ be the minimal gallery from $e$ to $\alpha e$, let $\Gamma_w$ be the gallery from $e_0$ to $(m,b)e_0$, and let $g\in H_L$ be such that $g\Gamma_e=\Gamma_w$. 
Suppose that $e\neq \alpha e$, and let $v$ be the vertex of $e$ farthest from $\alpha e$. Then $gv$ is the farthest vertex of $e_0$ from $(m,b)e_0$. If $gv$ is type zero, then $(m,b)e_0$ must be to the right of $e_0$; if $gv$ is of type one, then $(m,b)e_0$ must be to the left of $e_0$. Since an edge of $\sapt$ is to the right of $e_0$ if and only if it is of the form $(m,b)e_0$ with $m<0$ (see, e.g., figure \ref{standardapt}), the theorem follows.
\end{proof}

\subsection{Computing Relative Distance In $\build(GL_2)$: Applications}\label{buildappsec}

Let $\gamma$ be semi-simple regular elliptic and let $\delta\in G_E$ be such that $N(\delta)=\gamma$. In this section we will investigate the actions of $\gamma$ and $\delta\sigma$ upon the buildings. Our goal is to use theorem \ref{distance} to determine those $w\in\wae$ such that $\xwgz$ and $\xwdz$ are non-empty, and to then determine the cardinality of those non-empty sets. 
These $\xwgz$ and $\xwdz$, along with the Bernstein coefficients to be discussed later, will completely determine the 
orbital and twisted orbital integrals on the center of the Iwahori-Hecke algebras. Since all the extended edges in $\xwgz$ and in $\xwdz$ have size zero, it suffices to count these edges by counting the the number of edges $e$ in the building for $H$ (resp. $H_E$) such that $\inv(e,\gamma e)$ (respectively $\inv(e, \delta\sigma(e))$) is the affine Weyl group part of $w$.

We will first study the $\xwgz$, and then the $\xwdz$. For each of $\gamma$ and $\delta\sigma$ we must break our analysis down into many sub-cases. Here is a diagram of these cases:
\begin{center}
  \begin{tikzpicture}[scale=.5,
    event/.style={rectangle,thick,draw,fill=white,text width=3cm,
    		text centered,anchor=north,font=\scriptsize},
    bigevent/.style={rectangle,thick,draw,fill=white,text width=6cm,
    		text centered,anchor=north,font=\scriptsize},
    edge from parent path={(\tikzparentnode.south) -- ++(0,-0.55cm)
    			-| (\tikzchildnode.north)},
	    level 1/.style={sibling distance=7cm,level distance=1.5cm,growth parent anchor=south,nodes=event},
	    level 2/.style={sibling distance=7cm,level distance=1.5cm}]
	\node at (0,0) [event] {The action of $\gamma$}
		child{node  {$F'$ ramified over $F$}
			child{node [event] {$\vd(\gamma)$ even}}
			child{node [event] {$\vd(\gamma)$ odd}}
		}
		child{node [event] {$F'$ unramified over $F$}
		};
	\node at (0,-8) [event] {The action of $\delta\sigma$} [sibling distance=12cm]
	  	child[sibling distance=15cm]{node [event] {$F'$ ramified over $F$}
			child{node [event] {$\val_E\det(\delta)$ even}}
			child{node [event] {$\val_E\det(\delta)$ odd}}
		}
		child[sibling distance=15cm]{node [event] {$F'$ unramified over $F$}
			child{node [event] {$\mathbf{T}$ splits over $E$}
				child{node [event] {$\val_E\det(\delta)$ odd}}
				child{node [event] {$\val_E\det(\delta)$ even}
					child[sibling distance=25cm]{node [bigevent] {Difference between the $E$-valuations of the eigenvalues of $\delta$ is divisible by $4$}}
					child[sibling distance=2cm]{node [bigevent] {Difference between the $E$-valuations of the eigenvalues of $\delta$ is not divisible by $4$}}
				}
			}
			child{node [event] {$\mathbf{T}$ not split over $E$}}
		};
  \end{tikzpicture}
\end{center}
Many of these cases will have similar analyses, and many will lead to similar or identical non-empty $\xwgz$ and $\xwdz$ and their cardinalities, but each case above will have to be addressed.

Our analysis here is based on that of \cite[Chapter 5]{langlands}.

\subsubsection{The action of $\gamma$}\label{gammaaction}
For the rest of this section, we denote $\val_F\det(\gamma)$ by $s$. Then \ref{distance} (\ref{distone}) says that $\inv(e',\gamma e')=(m,b,s)$ for any edge $e'\in\build(G_L)$ and for some $(m,b)\in \Z\times \Z/2\Z$. In order to compute $m$ and $b$, we study that action of $\gamma$ on $\build(H_L)$.

\begin{lemma}\label{langlandsfixed} Let $\mathbf{T}'$ be a torus in $\mathbf{G}$ split over $L$, let $g\in T'_L$ have distinct eigenvalues with identical $L$-valuations, and let $\Delta_L(g)=q^{-d}$. Then $g$, acting on $\build(H_L)$,  fixes precisely those points at a distance less than or equal to $d$ from $\sapt_{T'}(L)$\end{lemma}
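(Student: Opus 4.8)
The plan is to fix a maximal split torus $\mathbf{T}'$ over $L$ with $g\in T'_L$ and reduce the problem to a rank-one computation on the single apartment $\sapt_{T'}(L)$ together with a propagation argument that pushes fixed-point information away from that apartment. First I would write $g$ in standard diagonal form: choosing a basis in which $T'_L$ is the diagonal torus, $g=\diag(\lambda_1,\lambda_2)$ with $\val_L(\lambda_1)=\val_L(\lambda_2)$, and after scaling by the central element $\lambda_2^{-1}$ (which acts trivially on $\build(H_L)$) I may assume $g=\diag(u\pi_L^{0},1)$ up to units — more precisely $g=\diag(\lambda,1)$ with $\val_L(\lambda-1)=2d$ by Lemma~\ref{eigenvalues} (since $\Delta_L(g)=q^{-d}$ forces $\val_L(\lambda_1-\lambda_2)-\val_L(\lambda_1)=d$, and here $\val_L(\lambda_1)=\val_L(\lambda_2)=0$, $\lambda_1\lambda_2$ a unit). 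This is the standard-form reduction already set up in Section~\ref{oldellipticsec}.

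Next I would compute the action on $\sapt_{T'}(L)$ directly in lattice language. A vertex of $\sapt_{T'}(L)$ at (signed) distance $n$ from $v_0$ along the apartment is represented by the lattice $\Lambda_n=\pi_L^{-n}\ints_L\oplus\ints_L$ (say), and $g\Lambda_n=\pi_L^{-n}\lambda\ints_L\oplus\ints_L=\Lambda_n$ as lattices since $\lambda$ is a unit — so in fact $g$ fixes $\sapt_{T'}(L)$ pointwise. The content of the lemma is therefore entirely about how far this fixed-point set extends transversally. For that I would use the tree structure: any point $p$ at distance $k$ from $\sapt_{T'}(L)$ lies on a unique geodesic ray leaving the apartment at some vertex $v\in\sapt_{T'}(L)$, and $g$ fixes $p$ iff $g$ fixes the first edge of that ray (and then inductively the whole ray). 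Conjugating by an element of $T'_L$ that translates along the apartment, I may assume $v=v_0$, represented by $\Lambda_0=\ints_L\oplus\ints_L$; the edges at $v_0$ not in the apartment correspond to the $q^n-1$ lines in $(\ints_L/\pi_L)^2$ other than the two coordinate lines, i.e. lines spanned by $(1,c)$ with $c\in(\ints_L/\pi_L)^\times$, and the sublattice $\Lambda=\langle\pi_L e_1+ c e_2,\ \pi_L e_2\rangle$-type lattices one reaches by walking $j$ steps out. The key computation is: $g$ fixes the vertex at distance $j$ out along the $c$-direction iff $g$ normalizes the corresponding lattice up to homothety, and a direct matrix calculation shows this holds iff $\val_L(\lambda-1)\ge 2j$, i.e. iff $j\le d$. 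Summing up: a point at transversal distance $k$ from the apartment is fixed iff $k\le d$, which is exactly the claim.

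The main obstacle — and the step I would be most careful about — is the transversal matrix computation: getting the exact cutoff $k\le d$ rather than $k\le d\pm 1$, and handling the fact that "distance to the apartment" for an interior point of an edge is a real number, so one must be precise about whether the bound is $\le d$ with $d$ possibly a half-integer or whether $d\in\Z$. Lemma~\ref{eigenvalues} guarantees $d\in\Z_{\ge 0}$ here, which makes the bookkeeping clean, but I would still want to verify the edge case $k=d$ exactly (the deepest fixed edge) and $k=d+1$ (the first non-fixed one) by explicit lattice chains. A secondary point to nail down is that it suffices to check fixedness of a single geodesic ray direction and that the answer is independent of which vertex $v$ of the apartment the ray emanates from — this follows from $T'_L$ acting transitively (by translations) on the vertices of $\sapt_{T'}(L)$ of a given type and commuting with $g$, together with the type-$0$/type-$1$ symmetry, so the argument localizes cleanly at $v_0$. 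With those two points settled, the lemma follows by the inductive ray argument. I would also remark that this is precisely the $GL_2$ analogue of the statement proved by Langlands \cite[Chapter 5]{langlands} and would cite it for the structure of the argument.
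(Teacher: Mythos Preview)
The paper does not prove this lemma at all: its entire proof is the single sentence ``This is \cite{langlands}, lemma 5.2.'' Your proposal therefore goes well beyond what the paper does, supplying the actual argument that the paper outsources. The strategy you outline --- diagonalize, scale by a central element so that $g=\diag(\lambda,1)$ with $\lambda\in\ints_L^\times$, observe that $g$ fixes $\sapt_{T'}(L)$ pointwise, and then do an explicit lattice computation for a transversal ray --- is exactly how Langlands proves it, so citing him at the end is apt.

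One concrete slip to fix: your normalizations wobble. You write $\val_L(\lambda-1)=2d$, but your own parenthetical gives the correct value $\val_L(\lambda-1)=d$. Likewise, the transversal computation should read $\val_L(\lambda-1)\ge j$, not $\ge 2j$: for $\Lambda=\langle e_1+ce_2,\ \pi_L^j e_2\rangle$ with $c\in\ints_L^\times$, one has $g\Lambda=\Lambda$ iff $c(1-\lambda)\in\pi_L^j\ints_L$, i.e.\ iff $j\le\val_L(\lambda-1)$. Your two errors happen to cancel, so the conclusion $j\le d$ survives, but you should correct both. This is precisely the ``cutoff'' issue you flagged as the main obstacle, so your instinct to be careful there was right.
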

Recall that $d$ is non-negative by lemma \ref{eigenvalues}.
\begin{proof}
This is \cite{langlands}, lemma 5.2.
\end{proof}

The eigenvalues of $\gamma$ are $\sigma_{F'/F}$-conjugates and so have the same valuation over $F'$. Thus $\gamma$ fixes precisely those edges at a distance less than or equal to $-\log_{q^n}(\Delta(\gamma))$ from $\apt(F')$, where $q^n$ is the cardinality of the residue field of $F'$ (here we are applying the lemma to the building $\build(H_{F'})$).

 \paragraph{\textbf{$F'$ ramified over $F$}}\label{ramsubsec}
We are now back to our earlier notation, where $\mathbf{T}$ is a torus defined over $F$ that splits over $F'$, and where $T=T(F)$. Recall from section \ref{lram} that $v_T$ is the unique Galois-fixed point of $\apt(F')$, $p_T$ is the unique point of $\build(H)$ nearest to $\apt(F')$ and to $v_T$, and $d_T$ is the $F'$-distance from $p_T$ to $v_T$. 
There exist $g\in T$ with $\val_F\det(g)$ odd. Such a $g$ does not act type-preservingly on the set of vertices in $\build(H)$ and so cannot fix any vertices of $\build(H)$. We have already seen, however, that $T$ fixes $p_T$; thus it is the barycenter of an edge $e_T$, which may also be characterized as the unique edge of $\build(H)$ closest to $\apt(F')$. Likewise, our chosen $\gamma$ will rotate $e_T$ about its barycenter if the $F$-valuation of its determinant is odd, and then the action of $\gamma$ will look like figure \ref{ramoddfig}.
\begin{center}
\begin{tikzpicture}
[axis/.style={<->,>=stealth'},every circle node/.style={draw,circle,
			inner sep=0mm},scale=.5,grow'=up,
	level 1/.style={level distance=2.5cm,sibling angle=120},
	level 2/.style={level distance=2.5cm,sibling angle=120,sibling distance=4cm},
	level 4/.style={level distance=2.5cm,sibling angle=120},
	level 5/.style={level distance=2.5cm,sibling angle=68},
	level 3/.style={level distance=2.5cm,sibling angle=57}]
  	\draw[white,myBG] (-5.5,0) -- (5.5,0);
  	\draw[black,] (-5.5,0) -- (5.5,0);
  	\draw (-5.5,0) node[left] {$\ldots$}; 
  	\draw (5.5,0) node[right] {$\ldots$};
  	\node at (8.5,0) {$\apt(F')$};  			
	  \foreach \pos in {2.5,5,-2.5,-5}{
	    \node[fill=white] at (\pos,0) [circle, minimum size=1.8mm] {};
	    \node[fill=gray] at (\pos,0) [circle, minimum size=1.7mm] {};
	}
	    \node[fill=gray] at (0,0) [circle, minimum size=1.7mm] {}; 
	    \node[fill=gray] at (0,0) [circle, minimum size=1.7mm] {} 
	    	child {node {$\vdots$} edge from parent };
	    \node at (0,2.6) {}	
	       child[grow=up,level distance=2cm] {node [above] {{\footnotesize$e_{T}=\gamma e_T$}} node [below right] {{\tiny$p_{T}$}} node[fill=gray] [circle,inner sep=0mm, minimum size=1mm] {} 
	    	  child[grow=right,sibling distance=4cm,level distance=1.75cm] {node[fill=white] [circle,inner sep=0mm, minimum size=1.7mm] {} 
	    	  	child[level distance=2cm] {node[fill=black] [circle,inner sep=0mm, minimum size=1.7mm] {} 
	    	  	 edge from parent node [fill=gray] [circle,inner sep=0mm, minimum size=1mm] {} 
			 node [above right] {$\gamma e$} }
	    	  	child[level distance=2cm] {node[fill=black] [circle,inner sep=0mm, minimum size=1.7mm] {} 
	    	  	 edge from parent node [fill=gray] [circle,inner sep=0mm, minimum size=1mm] {} }
	    	  } 
	    	  child[grow=left,sibling distance=4cm,level distance=1.75cm] {node[fill=black] [circle,inner sep=0mm, minimum size=1.7mm] {} 
	    	  	child[level distance=2cm] {node[fill=white] [circle,inner sep=0mm, minimum size=1.7mm] {} 
	    	  	 edge from parent node [fill=gray] [circle,inner sep=0mm, minimum size=1mm] {} 
			 node [above right] {$e$} } 
	    	  	child[level distance=2cm] {node[fill=white] [circle,inner sep=0mm, minimum size=1.7mm] {} 
	    	  	 edge from parent node [fill=gray] [circle,inner sep=0mm, minimum size=1mm] {} }
	    	  }
		};
\end{tikzpicture}\captionme{The action of $\gamma$ on $\build(H)$, $F'/F$ ramified, $\val_F\det(\gamma)$ odd}\label{ramoddfig}
\end{center}

When $\val_F\det(\gamma)$ is even, $\gamma$ acts type-preservingly; but $\gamma$ must send $e_T$ to itself,  and all type-preserving actions on $\build(H)$ sending $e_T$ to itself fix $e_T$. This and lemma \ref{langlandsfixed} imply that $\Delta_{F'}(\gamma)=q^{-b}$ for some $b\geq d_T+1$. Furthermore, $b-d_T-1$ must be even, otherwise $\gamma$ would fix exactly half of some edge in $\build(H)$, which is impossible. Write $b=2a+1+d_T$. Then $\Delta_F(\gamma)=q^{-a-\frac{d_T+1}{2}}$, and $\gamma$ fixes all those edges of $\build(H)$ an $F$-distance less than or equal to $a$ from $e_T$.
\begin{center}
\begin{tikzpicture}
[axis/.style={<->,>=stealth'},every circle node/.style={draw,circle,
			inner sep=0mm},scale=.5,grow'=up,
	level 1/.style={level distance=2.5cm,sibling angle=120},
	level 2/.style={level distance=2.5cm,sibling angle=120,sibling distance=4cm},
	level 4/.style={level distance=2.5cm,sibling angle=120},
	level 5/.style={level distance=2.5cm,sibling angle=68},
	level 3/.style={level distance=2.5cm,sibling angle=57}]
  	\draw[white,myBG] (-5.5,0) -- (5.5,0);
  	\draw[black,] (-5.5,0) -- (5.5,0);
  	\draw (-5.5,0) node[left] {$\ldots$}; 
  	\draw (5.5,0) node[right] {$\ldots$};
  	\node at (8.5,0) {$\apt(F')$};  			
	  \foreach \pos in {2.5,5,-2.5,-5}{
	    \node[fill=white] at (\pos,0) [circle, minimum size=1.8mm] {};
	    \node[fill=gray] at (\pos,0) [circle, minimum size=1.7mm] {};
	}
	    \node[fill=gray] at (0,0) [circle, minimum size=1.7mm] {}; 
	    \node[fill=gray] at (0,0) [circle, minimum size=1.7mm] {} 
	    	child {node {$\vdots$} edge from parent };
	    \node at (0,2.6) {}	
	       child[grow=up,level distance=2cm] {node [above] {{\footnotesize$e_{T}=\gamma e_T$}} node [below right] {{\tiny$p_{T}$}} node[fill=gray] [circle,inner sep=0mm, minimum size=1mm] {} 
	    	  child[grow=right,sibling distance=4cm,level distance=1.75cm] {node[fill=white] [circle,inner sep=0mm, minimum size=1.7mm] {} 
		    child[grow=south east,level distance=3cm] {node[fill=black] [circle,inner sep=0mm, minimum size=1.7mm] {} 
		      child[grow=east,level distance=1.5cm] {node {$\cdots$}
	    	  	child[grow=east,level distance=1.5cm] {node[fill=black] [circle,inner sep=0mm, minimum size=1.7mm] {} 
	    	  	child[grow=east,level distance=3cm] {node[fill=white] [circle,inner sep=0mm, minimum size=1.7mm] {} 
			  child[level distance=2cm,sibling distance=4cm] {node[fill=black] [circle,inner sep=0mm,minimum size=1.7mm] {}
	    	    		edge from parent node [fill=gray] [circle,inner sep=0mm, minimum size=1mm] {} 
				node [below left] {\footnotesize$\gamma \tilde{e}$} }
			  child[level distance=2cm] {node[fill=black] [circle,inner sep=0mm,minimum size=1.7mm] {}
	    	    		edge from parent node [fill=gray] [circle,inner sep=0mm, minimum size=1mm] {} 
				node [above left] {\footnotesize$\tilde{e}$} }
			edge from parent node [fill=gray] [circle,inner sep=0mm, minimum size=1mm] {} 
			node [above] {\footnotesize$\tilde{e}_a=\gamma \tilde{e}_a$}
			}
		        }
		      }
	    	    edge from parent node [fill=gray] [circle,inner sep=0mm, minimum size=1mm] {}
		    node [above right] {\footnotesize$\tilde{e}_1=\gamma \tilde{e}_1$}
		    }
	    	  } 
	    	  child[grow=left,sibling distance=4cm,level distance=1.75cm] {node[fill=black] [circle,inner sep=0mm, minimum size=1.7mm] {} 
		    child[grow=south west,level distance=3cm] {node[fill=white] [circle,inner sep=0mm, minimum size=1.7mm] {} 
		      child[grow=west,level distance=1.5cm] {node {$\cdots$}
	    	  	child[grow=west,level distance=1.5cm] {node[fill=white] [circle,inner sep=0mm, minimum size=1.7mm] {} 
	    	  	child[grow=west,level distance=3cm] {node[fill=black] [circle,inner sep=0mm, minimum size=1.7mm] {} 
			  child[level distance=2cm,sibling distance=4cm] {node[fill=white] [circle,inner sep=0mm,minimum size=1.7mm] {}
	    	    		edge from parent node [fill=gray] [circle,inner sep=0mm, minimum size=1mm] {} 
				node [above right] {\footnotesize$e$} }
			  child[level distance=2cm] {node[fill=white] [circle,inner sep=0mm,minimum size=1.7mm] {}
	    	    		edge from parent node [fill=gray] [circle,inner sep=0mm, minimum size=1mm] {} 
				node [below right] {\footnotesize$\gamma e$} }
			edge from parent node [fill=gray] [circle,inner sep=0mm, minimum size=1mm] {} 
			node [above] {\footnotesize$e_a=\gamma e_a$}
			}
		        }
		      }
	    	    edge from parent node [fill=gray] [circle,inner sep=0mm, minimum size=1mm] {}
		    node [above left] {\footnotesize$e_1=\gamma e_1$}
		    }
	    	  }
		};
\end{tikzpicture}\captionme{The action of $\gamma$ on $\build(H)$, $F'/F$ ramified, $\val_F\det(\gamma)$ even}
\end{center}
\begin{lemma}\label{ramcount} Let $F'$ be ramified over $F$, let $d_T$ be the distance from $p_T$ to $\apt(F')$, let $s=\val_F\det(\gamma)$, and let $\Delta(\gamma)=q^{-\left(a+\frac{d_T+1}{2}\right)}$.
\begin{enumerate}
\item\label{ramcountodd} When $s$ is odd, the non-empty $X_w^0(\gamma)$ are those with $w=(\pm r,0,s)$ for some $r\geq 0$, and the cardinality of $X_{(\pm r,0,s)}$ is $q^r$.
\item\label{ramcounteven} When $s$ is even, the non-empty $\xwgz$ are those with $w=(0,0,s)$, with cardinality $\dfrac{2q^{a+1}-q-1}{q-1}$, and $w=(-r,1,s)$ or $(r-1,1,s)$, $r\geq 1$, with cardinality $q^{a+r}$.
\end{enumerate}
\end{lemma}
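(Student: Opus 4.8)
The plan is to turn both parts into an edge-counting problem on $\build(H)$ and then extract the answers from the explicit picture of the action of $\gamma$ on $\build(H)$ set up above. The first step is a reduction. By theorem \ref{distance}\,(\ref{distone}), for \emph{every} edge $e'$ of $\build(G)$ one has $\inv(e',\gamma e')=(m,b,s)$ with the same $s=\val_F\det(\gamma)$; hence $X^0_w(\gamma)=\emptyset$ unless $w$ has size $s$. For $w=(m,b,s)$ the size-$0$ extended edges are exactly the pairs $(e,0)$ with $e$ an edge of $\build(H)$, and by the auxiliary lemma in the proof of theorem \ref{distance} relating $\inv$ in $\build(G)$ to $\inv$ in $\build(H)$, the map $(e,0)\mapsto e$ identifies $X^0_{(m,b,s)}(\gamma)$ with $\{\,e\in\build(H):\inv(e,\gamma e)=(m,b)\,\}$. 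So it suffices, for each edge $e$ of $\build(H)$, to read off $(m,b)=\inv(e,\gamma e)$ using theorem \ref{distance}\,(\ref{disttwo})--(\ref{distthree}) and then to count.

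For part (\ref{ramcountodd}), $s$ is odd, so $\gamma$ fixes no vertex of $\build(H)$; it fixes the midpoint $p_T$ of the edge $e_T$ and therefore interchanges the two endpoints of $e_T$ and the two subtrees of $\build(H)\setminus\{p_T\}$. If $e=e_T$ then $\gamma e=e$ and $\inv(e,\gamma e)=(0,0)$. Otherwise $e$ sits at gallery distance $k\geq 1$ from $e_T$ inside one subtree and $\gamma e$ at gallery distance $k$ in the other, so the minimal galleries from $e$ to $e_T$ and from $e_T$ to $\gamma e$ have length $k$ each and their concatenation is again minimal; thus $\ell(e,\gamma e)=2k$, and theorem \ref{distance}\,(\ref{disttwo}),(\ref{distfour}) give $b=0$ and $m=\pm k$. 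By theorem \ref{distance}\,(\ref{distthree}) the sign of $m$ is determined by the type of the vertex of $e$ farther from $e_T$; since vertex types alternate as one moves out along a subtree and the two subtrees are rooted at vertices of opposite type, for each $k\geq 1$ the $q^k$ edges at distance $k$ from $e_T$ in one subtree all give $m=+k$ and the $q^k$ such edges in the other all give $m=-k$. Hence the non-empty $X^0_w(\gamma)$ are exactly those with $w=(\pm r,0,s)$, $r\geq 0$, and $\#X^0_{(\pm r,0,s)}=q^r$ (which is $1$ when $r=0$).

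For part (\ref{ramcounteven}), $s$ is even, so $\gamma$ acts type-preservingly and, recalling the discussion above together with lemma \ref{langlandsfixed}, fixes exactly the edges of $\build(H)$ at $F$-distance $\leq a$ from $e_T$; call this set $B$. These are precisely the edges $e$ with $\inv(e,\gamma e)=(0,0)$, and counting them gives $\#B=1+\sum_{k=1}^{a}2q^{k}=\frac{2q^{a+1}-q-1}{q-1}$, settling the case $w=(0,0,s)$. If $e$ is at $F$-distance $a+j$ from $e_T$ with $j\geq 1$, the geodesic from $e$ toward $e_T$ reaches $B$ first at a ($\gamma$-fixed) vertex $v$ after $j$ edges, and $\gamma$ sends $e$ into the branch at $v$ opposite to the one containing $e$; since $e$ and $\gamma e$ then lie in distinct components of $\build(H)\setminus\{v\}$, each at gallery distance $j-1$ from an edge meeting $v$, one gets $\ell(e,\gamma e)=(j-1)+1+(j-1)=2j-1$. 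So $b=1$ and $m=j-1$ or $m=-j$ by theorem \ref{distance}, the choice again fixed by the type of the far vertex of $e$ via theorem \ref{distance}\,(\ref{distthree}). Of the $2q^{a+j}$ edges at distance $a+j$, namely $q^{a+j}$ in each of the two branches of $\build(H)\setminus\{p_T\}$ (with far-vertex types opposite in the two branches), exactly $q^{a+j}$ have far vertex of type $0$ (giving $m=-j$) and exactly $q^{a+j}$ have far vertex of type $1$ (giving $m=j-1$). Writing $r=j$, the remaining non-empty $X^0_w(\gamma)$ are $w=(-r,1,s)$ and $w=(r-1,1,s)$ for $r\geq 1$, each with cardinality $q^{a+r}$, completing the list.

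The main obstacle is purely bookkeeping. In each part one must keep straight which subtree or branch a given edge occupies, how the two vertex types alternate along it, and how this interacts with the sign rule of theorem \ref{distance}\,(\ref{distthree}); in the even case there is in addition the mildly fiddly point of converting the $F'$-phrased fixed-point statement of lemma \ref{langlandsfixed}, via the barycentric-subdivision embedding of $\build(H)$ in $\build(H_{F'})$, into the assertion that the $\gamma$-fixed edges of $\build(H)$ form exactly the radius-$a$ ball about $e_T$. I do not expect any conceptual difficulty: once the combinatorial positions and vertex types are correctly pinned down, all the counts fall out at once.
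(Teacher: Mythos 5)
Your proof is correct and follows essentially the same route as the paper's: reduce via theorem \ref{distance} to counting edges $e$ of $\build(H)$ by the value of $\inv(e,\gamma e)\in\wa$, then use the geometry of $\gamma$'s fixed set (the singleton $\{p_T\}$ when $s$ is odd, the radius-$a$ ball about $e_T$ when $s$ is even) to read off gallery lengths, apply parts (b)--(d) of theorem \ref{distance}, and count edges at each distance. The only difference is expository: you spell out the derivation of $\ell(e,\gamma e)=2j-1$ and the sign rule in part (2), which the paper simply asserts, but there is no conceptual divergence.
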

\begin{proof}
First, let us suppose that $s$ is odd, and let $e$ be an edge in $\build(H)$ such that $\ell_F(e,e_T)$ is $r$, $r\geq 0$. Since $s$ is odd $\gamma$ fixes the barycenter $p_T$ of $e_T$ and no other point, permuting the vertices of $e_T$ that are facets over $F$. Thus $\ell_F(e,\gamma e)=2r$ is even, and so theorem \ref{distance} (\ref{disttwo}) implies that $b=0$ for all of these edges. When $r=0$ \ref{distance} (\ref{distfour}) implies $m=0$, so that $X_{(0,0,s)}(\gamma)$ is a singleton set, with $e_T$ being its only member. 

Now consider those edges with $r>0$. The action of $\gamma$ causes all such edges on one side of $e_T$ to move to edges on the other side, and vice versa, and so the farthest vertex of any such $e$ from $\gamma e$ must be the farthest vertex of $e$ from $e_T$.

Again, all those edges on one side of $e_T$ will have their farthest vertex from $e_T$ be of one type, while those vertices of the edges on the other side will have the other type. This and theorem \ref{distance} imply that those edges with $r>0$ are evenly distributed between the two sets $X_{(\pm r,0,s)}(\gamma)$, and that the cardinality of each set is the number of edges length $r$ away from $e_T$ and on one side of it. Let $v$ be a vertex of $e_T$. There are $q$ edges distance 1 from $e_T$ whose closest point of $e_T$ is $v$; there are $q^2$ edges at distance $2$ with the same property, and so on. In general, $\# X_{(\pm r, 0,s)}(\gamma)=q^r$.

Now suppose $s$ to be even. Then $\gamma$ fixes all edges of $\build(H)$ which lie at a length less than or equal to $a$ from $e_T$.

Theorem \ref{distance} implies that the fixed edges must lie in $X^0_{(0,0,s)}(\gamma)$; let us count them, by enumerating those at particular lengths from $e_T$. First is the edge $e_T$; then there are $2q$ distinct edges of $\build(H)$ attached to $e_T$, i.e., at length $1$ from $e_T$ ($2$ because we count edges radiating out of each $F$-vertex of $e_T$). At length $2$, there are $2q^2$, and, in general, at length $i\geq1$ there are $2q^i$ edges. Thus \begin{align*}
	\#X_{(0,0,s)}(\gamma) &= 1+2\sum_{i=1}^a q^i\\
	&= -1+2\sum_{i=0}^a q^i\\
	&= -1+2\frac{q^{a+1}-1}{q-1}\\
	&= \frac{2q^{a+1}-q-1}{q-1}.
\end{align*}

Consider those edges $e$ that are length $a+i$ away from $e_T$, $i\geq 1$.
 The length of the minimal gallery from $e$ to $\gamma e$ is $2i-1$, and therefore $b=1$. As in the case of odd $s$, all of the edges on one side of $e_T$ and length $a+i$ away from it have their farthest vertex from $\gamma e$ of one type, while those on the other side have farthest vertex of the other type, so that half of these edges have non-negative $m$ while half have negative $m$. Then theorem \ref{distance} tells us that the other non-empty $X^0_w(\gamma)$ are those with $w=(r-1,1,s)$ or $(-r,1,s)$, $r>0$, and the cardinality of $X_w^0(\gamma)$ for either $w$ is $q^{a+r}$.
\end{proof}

\paragraph{\textbf{$F'$ unramified over $F$}}
Suppose
 $F'/F$ is unramified and $\gamma$ (and therefore $T$) is in standard form.  Lemma
 \ref{langlandsfixed} and section \ref{slbuildsec} imply that if $\Delta(\gamma)=q^{-a}$, then $\gamma$ fixes precisely those points of $\build(H)$ a distance less than or equal to $a$ from $v_0$, the unique Galois-fixed point of $\sapt_{T}$.
\begin{center}
\begin{samepage}
\begin{tikzpicture}
[axis/.style={<->,>=stealth'},every circle node/.style={draw,circle,
			inner sep=0mm},scale=.5,grow'=up,
	level 1/.style={level distance=2.5cm,sibling angle=120},
	level 2/.style={level distance=2.5cm,sibling angle=120,sibling distance=4cm},
	level 4/.style={level distance=2.5cm,sibling angle=120},
	level 5/.style={level distance=2.5cm,sibling angle=68},
	level 3/.style={level distance=2.5cm,sibling angle=57}]
  	\draw[white,myBG] (-8,0) -- (8,0);
  	\draw[black,] (-8,0) -- (8,0);
  	\draw (-8,0) node[left] {$\ldots$}; 
  	\draw (8,0) node[right] {$\ldots$};
  	\node at (11,0) {$\apt(F')$};  			
	  \foreach \pos in {0,2.5,5,7.5,-2.5,-5,-7.5}{
	    \node[fill=white] at (\pos,0) [circle, minimum size=1.8mm] {};
	    \node[fill=gray] at (\pos,0) [circle, minimum size=1.7mm] {};
	}
	    \node[fill=black] at (0,0) [circle, minimum size=1.7mm] {}; 
	    \node at (0,-.2) [below] {{\footnotesize$v_0$}};
	    \node[fill=black] at (0,0) [circle, minimum size=1.7mm] {} 
	    	child {node[fill=white] [circle,inner sep=0mm, minimum size=1.7mm] {} 
			child {node {$\vdots$} edge from parent 
			} 
		edge from parent node [right] {{\footnotesize distance from $\apt(F')$ is $1$}}
		node [left] {{\footnotesize$e=\gamma e$}}
		};
	    \node at (0,5.1) {}
	    		child[level distance=2cm] {node[fill=white] [circle,inner sep=0mm, minimum size=1.7mm] {} 
	    		    child{node[fill=black] [circle,inner sep=0mm, minimum size=1.7mm] {} 
	    			child {node[fill=white] [circle,inner sep=0mm, minimum size=1.7mm] {}
					edge from parent node [left] {{\footnotesize$e$}}}
				child {node[fill=white] [circle,inner sep=0mm, minimum size=1.7mm] {}
					edge from parent node [right] {{\footnotesize$\gamma e$}}}
			    edge from parent node [right] {{\footnotesize distance from $\apt(F')$ is $a$}}
			    node [left] {{\footnotesize$e=\gamma e$}}
			    }
			};
\end{tikzpicture}\captionme{The action of $\gamma$ on $\build(H)$, $F'/F$ unramified}
\end{samepage}
\end{center}

\begin{lemma}\label{unramcount}
Let $F'/F$ be unramified, let $s=\val_F\det(\gamma)$, and let $\Delta(\gamma)=q^{-a}$. The cardinality of $X^0_{(0,0,s)}(\gamma)$ is $\left(\frac{q+1}{q-1}\right)(q^a-1)$. When the type of $v_T$ is zero, 
\begin{align*}
 \#X_{(0,0,s)}^0(\gamma) &= \left(\frac{q+1}{q-1}\right)(q^a-1)\\
 \#X_{(-r,1,s)}^0(\gamma) &= \begin{cases} (q+1)q^{a+r-1}, & a+r\equiv0\mod 2\\
 				 0, & a+r\equiv 1\mod 2\end{cases}\\
 \#X_{(r-1,1,s)}^0(\gamma) &= \begin{cases} (q+1)q^{a+r-1}, & a+r\equiv 1\mod 2\\
 				 0, & a+r\equiv 0\mod 2\end{cases}
\end{align*}
where $r\geq 1$ is an integer.
\end{lemma}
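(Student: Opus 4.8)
The plan is to follow the template used in the proof of Lemma~\ref{ramcount}, with the edge $e_T$ replaced by the vertex $v_T$. The geometric input I would take for granted is the statement recorded just before the lemma: in the unramified case $\gamma$ fixes precisely the closed ball $B(v_T,a)$ of radius $a$ about $v_T$ in $\build(H)$, and fixes nothing outside it. (This follows from Lemma~\ref{langlandsfixed} applied to $\build(H_{F'})$, since $\build(H)\cap\apt(F')=\{v_T\}$ and the nearest-point projection onto $\apt(F')$ is $\gal(F'/F)$-equivariant, so that on $\build(H)$ the $F$-distance to $\apt(F')$ is just the distance to $v_T$; recall also $\Delta(\gamma)=q^{-a}$.) Throughout I take $v_T$ of type zero, the type-one case being obtained by interchanging the two length-$(2r-1)$ elements of $\wa$.

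\emph{Counting $X^0_{(0,0,s)}(\gamma)$.} An edge $e$ of $\build(H)$ has $\inv(e,\gamma e)=(0,0)$ precisely when $\gamma$ fixes $e$ pointwise, i.e. precisely when the endpoint of $e$ farther from $v_T$ lies at distance $\le a$. Sorting these edges by the distance $k$ of their nearer endpoint from $v_T$, there are $q+1$ with $k=0$ and $(q+1)q^{k}$ with $1\le k\le a-1$, so $\#X^0_{(0,0,s)}(\gamma)=(q+1)\sum_{k=0}^{a-1}q^{k}=\frac{q+1}{q-1}(q^{a}-1)$. This computation is blind to the type of $v_T$, which is why the value is unconditional.

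\emph{Counting the $X^0_w(\gamma)$ with $b=1$.} Here I would analyze how $\gamma$ moves the branches hanging off the sphere of radius $a$. The key structural remark, forced by the fixed set being exactly $B(v_T,a)$: at a fixed boundary vertex $w$ (at distance $a$ from $v_T$) the element $\gamma$ permutes the $q$ edges at $w$ pointing away from $v_T$ with no fixed edge among them, since a fixed such edge would have its far endpoint, at distance $a+1$, also fixed. Consequently, if $e$ is an edge whose far endpoint $p'$ is at distance $a+r$ from $v_T$ with $r\ge1$, the geodesic from $e$ to $\gamma e$ runs inward from $e$ to the distance-$a$ vertex $w$ on the segment $[v_T,p']$ and then back out to $\gamma e$ with no backtracking at $w$, so $\ell(e,\gamma e)=2r-1$. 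By \ref{distance} (\ref{distfour}) the affine-Weyl part of $\inv(e',\gamma e')$ is then $(-r,1)$ or $(r-1,1)$, and by \ref{distance} (\ref{distthree}) which one occurs is decided by the type of the vertex of $e$ farthest from $\gamma e$, which is exactly $p'$; with $v_T$ of type zero, that type is $a+r\bmod 2$. Hence if $a+r$ is even every such edge lies in $X^0_{(-r,1,s)}(\gamma)$, and if $a+r$ is odd every such edge lies in $X^0_{(r-1,1,s)}(\gamma)$. Each of the $(q+1)q^{a+r-1}$ vertices at distance $a+r$ from $v_T$ is the far endpoint of a unique edge, giving the stated cardinalities; and since every non-fixed edge has far endpoint at some distance $a+r$ with $r\ge1$, this accounts for all nonempty $X^0_w(\gamma)$.

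The step I expect to carry the real weight is the identity $\ell(e,\gamma e)=2r-1$, i.e. the assertion that the inward geodesics $[w,e]$ and $\gamma[w,e]=[w,\gamma e]$ genuinely diverge at $w$ rather than coinciding further along some sub-branch that $\gamma$ stabilizes. That is precisely what the no-fixed-edge remark rules out, but it needs to be phrased carefully: $\gamma$ fixes $w$, hence permutes the $q$ directions at $w$ away from $v_T$, and any fixed direction would produce a fixed neighbour of $w$ at distance $a+1$, contradicting Lemma~\ref{langlandsfixed}. With that in hand the rest is bookkeeping in a $(q+1)$-regular tree together with the alternation of vertex types.
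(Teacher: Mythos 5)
Your proof is correct and follows essentially the same route as the paper's: both count the fixed edges by shells around $v_T$ to get $\frac{q+1}{q-1}(q^a-1)$, both obtain $\ell(e,\gamma e)=2r-1$ for an edge whose far vertex is at distance $a+r$, and both read off $m=-r$ versus $m=r-1$ from the type (hence parity) of that far vertex via Theorem~\ref{distance}. The one place you go beyond the paper is in explicitly justifying, via the no-fixed-outward-edge observation at a boundary vertex, why the inward geodesics from $e$ and $\gamma e$ truly diverge at the distance-$a$ vertex — a step the paper asserts without spelling out — but this is the same underlying geometric fact, not a different method.
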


\begin{proof}
 Every $\gamma$-fixed edge $e$, $\inv(e,\gamma e)=(m,b,s)$, has $m=b=0$. If $a=0$ then the only $\gamma$-fixed point of $\build(H)$ is $v_0$ and $X^0_{(0,0,s)}(\gamma)$ is empty, which agrees with our formula above. When $a>0$ there are $q+1$ edges in $\build(H)$ attached to $v_0$ (length $1$ from $\apt(F')$), and, branching outwards, $q$ edges in $\build(H)$ attached to each of those, and so on; in total there are  
$$\sum_{i=1}^a (q+1)q^{i-1} = \frac{q+1}{q-1}(q^a - 1)$$ fixed
edges in $\build(H)$, all of which lie in $X^0_{(0,0,s)}(\gamma)$.

For those edges lying at distance $k=a+r$, $r\geq 1$, from $\apt(F')$, $\ell(e, \gamma e)$ must be odd; for $e$ is length $r$ away from the nearest $\gamma$-fixed edge, as is $\gamma e$, so that $\ell(e, \gamma e)=2r-1$. Theorem \ref{distance} (\ref{disttwo}) implies that these edges, we have $b=1$. 

The vertex $v_0$ has type zero, so the type of the vertex of $e$ farthest from $v_0$ is $0$ for those with even $k$ and $1$ for those and odd $k$. The farthest vertex of $e$ from $v_0$ is also the first vertex in the minimal length gallery from $e$ to $\gamma e$, and 
since $k$ is even or odd as $a$ and $r$ share or do not share parity, theorem \ref{distance} (\ref{distthree}) implies that $m<0$ when $a$ and $r$ share parity, while $m\geq0$ when $a$ and $r$ have different parities. This and theorem \ref{distance} (\ref{distfour}), which gives $2r-1=|2m+1|$, imply that $m=-r$ when $a$ and $r$ share parity and $m=r-1$ when they do not. 

Since there are $(q+1)q^{a+r-1}$ edges in $\build(G)$ at length $k=a+r$ from $\apt(F')$, the lemma follows.
\end{proof}

\subsubsection{The action of $\delta\sigma$}\label{deltaaction}
Let $\delta\in G_E$ and let $e'=ge_0'$ be an extended edge in $\build(G_E)$. Then  we may write $\inv(e',\delta\sigma(e')) = (m,b,s)$, where $(m,b,s)$ is defined by $I_Eg^{-1}\delta\sigma(g)I_E=I_E(m,b,s)I_E$. Comparing the valuations of the determinants of representatives on either side of the equation shows us that $s=\val_E\det\delta$. The majority of the rest of this section is devoted to determining those $(m,b)$ such that $X_{(m,b,s)}^0(\delta\sigma)$ is non-empty, and the cardinality of the non-empty \xwdz. As in our analysis of the action of $\gamma$, the rest of our analysis may be performed on $\build(H_E)$.

First, however, there is a more illuminating description of the twisted torus $G_\delta$ in the scenario of greatest interest to us, as well as an important lemma about the action of $\delta$ on $\build(H_E)$.
\begin{lemma}\label{toruslemma}
Let $\delta\in G_E$ be such that $\gamma=N(\delta)$ is semi-simple regular. Then 
\begin{align*}
G_\delta(F) &= T.
\end{align*}
\end{lemma}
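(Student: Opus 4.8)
\textbf{Proof proposal for Lemma \ref{toruslemma}.}

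The plan is to show the two inclusions $T \subseteq G_\delta(F)$ and $G_\delta(F) \subseteq T$ separately, after first reducing to a convenient normal form for $\delta$. By Lemma \ref{associated}(\ref{assocone}) I may twisted-conjugate $\delta$ so that $N(\delta) = \gamma$ on the nose (twisted conjugation carries $G_\delta$ isomorphically to the twisted centralizer of the conjugate, so this does not change the statement), and I may further assume $\gamma$ is in standard form. For the inclusion $T \subseteq G_\delta(F)$: since $\gamma$ is semi-simple regular, $T = G_\gamma$ is a maximal torus, hence commutative; using $\sigma(N(\delta)) = \delta^{-1} N(\delta)\delta$ from the ``Conjugacy'' subsection, one sees $\gamma$ and $\sigma(\gamma)$ are conjugate by $\delta$, but $\gamma \in G$ is $\sigma$-fixed, so in fact $\delta$ centralizes $\gamma$, i.e. $\delta \in T_E$. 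Since $T_E$ is abelian, every $t \in T = T_E^\sigma$ satisfies $t^{-1}\delta\sigma(t) = \delta \sigma(t) t^{-1} \cdot$ (rearranging in the abelian group) $= \delta$, wait — more carefully, in the abelian group $T_E$ we have $t^{-1}\delta\sigma(t) = t^{-1}\sigma(t)\delta = \delta$ because $\sigma(t) = t$ for $t \in T$. Hence $T \subseteq G_\delta(F)$.

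For the reverse inclusion $G_\delta(F) \subseteq T$, suppose $g \in G_E$ satisfies $g^{-1}\delta\sigma(g) = \delta$. Applying the norm map and using the relation $N(g^{-1}\delta\sigma(g)) = g^{-1}N(\delta) g$ from the ``Conjugacy'' subsection, we get $g^{-1}\gamma g = g^{-1} N(\delta) g = N(\delta) = \gamma$, so $g \in T_E = (G_\gamma)_E$, the $E$-points of the centralizer torus. It remains to show $g \in T = T_F$, i.e. that $g$ is $\sigma$-fixed. From $g^{-1}\delta\sigma(g) = \delta$ and $\delta \in T_E$ abelian, we get $\sigma(g) = \delta^{-1} g \delta = g$ since $T_E$ is commutative and $g, \delta \in T_E$. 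Therefore $g \in T_E^\sigma = T$, and $G_\delta(F) \subseteq T$.

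I expect the main subtlety — more bookkeeping than genuine obstacle — to be justifying that $\delta$ itself lies in $T_E$ (equivalently that $\delta$ centralizes $\gamma$), which is where semi-simple regularity of $\gamma$ is essential: a priori $\delta$ only conjugates $\gamma$ to $\sigma(\gamma) = \gamma$, giving $\delta \in N_{G_E}(T_E)$, and one must rule out that $\delta$ realizes the non-trivial Weyl element. Since $T$ is elliptic (non-split over $F$) but split over the quadratic extension $F'$, the Weyl group of $T_E$ over $E$ could be non-trivial when $F' \subseteq E$; the point is that $\sigma(\gamma) = \gamma$ forces $\delta \in Z_{G_E}(\gamma) = T_E$ directly, with no room for a Weyl translate, precisely because $\gamma$ (not merely $T$) is fixed. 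Once $\delta \in T_E$ is established, everything else is a short computation in the abelian group $T_E$ together with taking $\sigma$-fixed points, and the two inclusions combine to give $G_\delta(F) = T$.
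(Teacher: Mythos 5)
Your proof is correct and follows essentially the same path as the paper's: both arguments hinge on (i) applying the norm map to reduce membership in $G_\delta(F)$ to membership in $T(E)$, (ii) observing $\sigma(\gamma)=\gamma=\delta^{-1}\gamma\delta$ forces $\delta\in T(E)$ since $\gamma$ is regular semi-simple, and (iii) using commutativity of $T(E)$ to extract $\sigma$-fixedness. The paper packages this through the intermediate set $T_\delta(F)=T(E)\cap G_\delta(F)$ while you prove the two inclusions directly, but the ideas are identical; your opening reduction to $N(\delta)=\gamma$ and to standard form is already part of the hypothesis and is never used, so it can simply be dropped.
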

 \begin{proof}
 We show this result by showing that both $T$ and $G_\delta(F)$ are equal to $T_\delta(F)$, where $T_\delta(F)=T(E)\cap G_\delta(F)$. We first show that $G_\delta(F)=T_\delta(F)$. The definition of $T_\delta(F)$ says that $T_\delta(F)\subseteq G_\delta(F)$. Let $g\in G_\delta(F)$. By definition $g^{-1}\delta=\delta\sigma(g^{-1})$, which implies that for any $i\in \Z$, $\sigma^i(g^{-1}\delta)=\sigma^i(\delta)\sigma^{i+1}(g^{-1})$. Then
\begin{align*}
g^{-1}\gamma g &= g^{-1}N(\delta)g\\
 &= N(g^{-1}\delta\sigma(g))\\
 &= N(\delta)\\
 &= \gamma.
\end{align*}
This shows that $G_\delta(F)\subseteq T$, hence $G_\delta(F)\subseteq T_\delta(F)$.

 Now we must show that  $T\sigd(F)=T(F)$. We know that $T_\delta(F)\subseteq T(E)$ by construction.
 Let $t\in T_\delta(F)$, so $\delta=t^{-1}\delta\sigma(t)$. Then $\delta\in T(E)$, since $\gamma=\sigma(\gamma)=\sigma(N(\delta)=\delta^{-1}N(\delta)\delta = \delta^{-1}\gamma\delta$. Because $\mathbf{T}$ is abelian we must have $1=t^{-1}\sigma(t)$, that is, $t\in T(F)$. 

Finally, let $t\in T(F)$. Then 
 \begin{align*}
 t^{-1}\delta\sigma(t)&= t^{-1}\sigma(t)\delta\\
 &= \delta.
 \end{align*}
 \end{proof}

\begin{lemma}\label{langlemmatwo}
Suppose that the eigenvalues of $\gamma$ have identical valuations and let $\Delta(\gamma)=q^{-d}$, so that $\gamma$ fixes precisely those points of $\build(H_{EF'})$ an $EF'$-distance less than or equal to $d$ from $\apt(EF')$. Let $\delta$ be chosen such that $N\delta=\gamma$. Then 
\begin{enumerate}
\item\label{deltafixedone} $\delsig$ fixes a vertex $v$ in $\build(H_E)$ only if $\gamma$ does.
\item\label{deltafixedtwo} If $\delsig$ fixes a vertex $v$ of $\build(H_E)$ an $EF'$-distance less than $d$ from $\apt(EF')$ in $\build(H_{EF'})$, then the number of edges of $\build(H_{E})$ of which $v$ is a facet and which are also fixed by $\delsig$ is $q+1$ (the same as the number of edges which can be joined to any vertex in $\build(G)$) 
\end{enumerate}
\end{lemma}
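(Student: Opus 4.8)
The plan is to reduce both statements to the single algebraic fact that, as operators on $\build(H_E)$, $(\delsig)^f$ agrees with $\gamma$. For part (\ref{deltafixedone}) this is essentially the whole argument: reading $\delsig$ as ``multiply by $\delta$, then apply the Galois action $\sigma$'' and using the identity $\sigma\circ(\text{mult.\ by }\beta)=(\text{mult.\ by }\sigma(\beta))\circ\sigma$, an induction gives $(\delsig)^k=(\text{mult.\ by }\delta\,\sigma(\delta)\cdots\sigma^{k-1}(\delta))\circ\sigma^k$; with $k=f$ and $\sigma^f=1$ in $\gal(E,F)$ this reads $(\delsig)^f=\text{mult.\ by }N(\delta)=\gamma$ on $\build(H_E)$. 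Hence $\delsig(v)=v$ forces $\gamma v=(\delsig)^f(v)=v$, which is part (\ref{deltafixedone}).

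For part (\ref{deltafixedtwo}) I would first record a parity point: if $\delsig$ fixes $v=[\Lambda]$ then $\delta\sigma(\Lambda)=c\Lambda$ for some $c\in E\cross$, and comparing valuations of covolumes (using that $\sigma$ preserves covolumes of $\ints_E$-lattices) gives $\val_E\det(\delta)=2\val_E(c)$, so in the only nontrivial case $\val_E\det(\delta)$ is even. Then I would conjugate to the base vertex: pick $g\in G_E$ with $gv_0=v$, so $g^{-1}\delsig g=\delta''\sigma$ with $\delta''=g^{-1}\delta\sigma(g)$, $N(\delta'')=g^{-1}\gamma g$, and $\delta''\sigma$ fixing $v_0$; since $\sigma$ fixes $\Lambda_0$, $\delta''$ itself stabilizes $v_0$, say $\delta''=c\,\delta_0$ with $c$ central and $\delta_0\in G_{\ints_E}$. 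The edges of $\build(H_E)$ incident to $v_0$ form the link $\mathbf{P}^1(\ints_E/\primes_E)$, of cardinality $q^f+1$, on which $\delta_0$ acts through $\overline{\delta_0}\in PGL_2(\ints_E/\primes_E)$ and $\sigma$ acts as a generator $\phi$ of $\gal\bigl((\ints_E/\primes_E)/(\ints/\primes)\bigr)$, i.e.\ a Frobenius; so the $\delsig$-fixed edges at $v$ are exactly the fixed points of $\overline{\delta_0}\circ\phi$ on $\mathbf{P}^1(\ints_E/\primes_E)$.

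The count is then finite-field geometry. By Lang's theorem for $PGL_2$ over $\overline{\ints/\primes}$ with Frobenius $\phi$, write $\overline{\delta_0}=h^{-1}\phi(h)$; then $\overline{\delta_0}\circ\phi=h^{-1}\circ\phi\circ h$ as operators on $\mathbf{P}^1(\overline{\ints/\primes})$, so its fixed-point set is $h^{-1}\bigl(\mathbf{P}^1(\ints/\primes)\bigr)$, of cardinality $q+1$. It remains to check these $q+1$ points are rational over $\ints_E/\primes_E$, and this is where the hypothesis ``$v$ at $EF'$-distance \emph{less than} $d$ from $\apt(EF')$'' enters: that strict bound forces every edge of $\build(H_E)$ incident to $v$ to lie within $EF'$-distance $d$ of $\apt(EF')$, hence (lemma \ref{langlandsfixed}, applied over $EF'$ to $\gamma$, equivalently after conjugation to $N(\delta'')$) to be $\gamma$-fixed; so $\gamma=(\delsig)^f$ acts trivially on the link, which after reduction says $\overline{N(\delta_0)}$ is trivial in $PGL_2$. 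Therefore $(\overline{\delta_0}\circ\phi)^f=\overline{N(\delta_0)}\circ\phi^f$ acts on $\mathbf{P}^1$ simply as $\phi^f$, the Frobenius of $\ints_E/\primes_E$, so any $\overline{\delta_0}\circ\phi$-fixed point is $\phi^f$-fixed and hence lies in $\mathbf{P}^1(\ints_E/\primes_E)$. This yields exactly $q+1$ fixed edges.

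The step I expect to be the main obstacle is precisely this rationality check: Lang's theorem alone only produces fixed points over the algebraic closure of the residue field, and one must see that the strict distance hypothesis makes $\gamma$ act trivially on the \emph{entire} link of $v$. In the ramified case there is the additional chore of relating $\build(H_E)$ and $\build(H_{EF'})$ through barycentric subdivision, using the distance bookkeeping of section \ref{ramsubsec}, but the shape of the argument is unchanged.
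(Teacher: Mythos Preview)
Your argument for part (\ref{deltafixedone}) is exactly the paper's: both observe that $(\delsig)^f$ acts as $N(\delta)=\gamma$ on $\build(H_E)$, and the fixed-point implication is immediate.

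For part (\ref{deltafixedtwo}) the paper gives no argument at all; it simply cites \cite[pp.~66--67]{langlands}. Your Lang's-theorem computation on the link is therefore genuinely additional content rather than a different route to the same place. The argument is sound: after conjugating to $v_0$ and reducing mod $\primes_E$, the problem becomes counting fixed points of $\overline{\delta_0}\circ\phi$ on $\mathbf{P}^1(\ints_E/\primes_E)$; Lang's theorem over $\overline{\ints/\primes}$ produces $q+1$ fixed points in $\mathbf{P}^1(\overline{\ints/\primes})$, and your rationality step---using the strict distance bound to make $\gamma$ act trivially on the whole link, hence $\overline{N(\delta_0)}=1$ in $PGL_2$, hence $(\overline{\delta_0}\circ\phi)^f=\phi^f$---is precisely what forces those $q+1$ points down into $\mathbf{P}^1(\ints_E/\primes_E)$. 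This is the crux, and you have identified it correctly; without the strict inequality one really could lose rationality. What your write-up buys over the bare citation is that it makes visible \emph{why} the hypothesis ``distance $<d$'' rather than ``$\le d$'' is needed. The ramified-case distance bookkeeping you flag is indeed only a matter of translating $E$-edges into pairs of $EF'$-edges via barycentric subdivision, and does not change the shape of the argument.
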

\begin{center}
\begin{tikzpicture}
[axis/.style={<->,>=stealth'},every circle node/.style={draw,circle,
			inner sep=0mm},scale=.5,grow cyclic,
	level 1/.style={level distance=6cm,sibling angle=72},
	level 2/.style={level distance=4cm,sibling angle=72},
	level 3/.style={level distance=2cm,sibling angle=72}]
	\node at (0,0) [above right] {\scriptsize$v=\delta\sigma v$};    
	\node[fill=black] at (0,0) [circle, minimum size=1.7mm] {}
    	  child {node[fill=white] [circle,inner sep=0mm, minimum size=1.7mm] {} 
	  edge from parent node [above left] {\footnotesize$e_4=\delta\sigma e_4$}
	  }
    	  child {node[fill=white] [circle,inner sep=0mm, minimum size=1.7mm] {} 
	  edge from parent node [right] {\footnotesize$\delta\sigma e_3$}
	  }
    	  child {node[fill=white] [circle,inner sep=0mm, minimum size=1.7mm] {} 
    	    child {node[fill=black] [circle,inner sep=0mm, minimum size=1.7mm] {} 
	    edge from parent node [below right] {\scriptsize$e_8=\delta\sigma e_8$}
	    }
    	    child {node[fill=black] [circle,inner sep=1mm, minimum size=1.7mm] {} 
	    edge from parent node [right] {\scriptsize$\delta\sigma e_7$}
	    }
    	    child[level distance=6cm] {node[fill=black] [circle,inner sep=0mm, minimum size=1.7mm] {} 
    	  	child {node[fill=white] [circle,inner sep=0mm, minimum size=1.7mm] {} 
	    	edge from parent node [below left] {\tiny$\delta\sigma e_9$}
	  	}
    	  	child {node[fill=white] [circle,inner sep=0mm, minimum size=1.7mm] {} 
	    	edge from parent node [below] {\tiny$\delta\sigma e_{10}$}
	  	}
    	  	child {node[fill=white] [circle,inner sep=0mm, minimum size=1.7mm] {} 
	    	edge from parent node [right] {\tiny$e_{10}$}
	  	}
    	  	child {node[fill=white] [circle,inner sep=0mm, minimum size=1.7mm] {} 
	    	edge from parent node [below left] {\tiny$e_9$}
		}
	    edge from parent node [below right] {\scriptsize$e_6=\delta\sigma e_6$}
	    }
    	    child {node[fill=black] [circle,inner sep=0mm, minimum size=1.7mm] {} 
	    edge from parent node [above right] {\scriptsize$e_7$}
	    }
	  edge from parent node [below] {\footnotesize$e_1=\delta\sigma e_1$}
	  }
    	  child {node[fill=white] [circle,inner sep=0mm, minimum size=1.7mm] {} 
	  edge from parent node [right] {\footnotesize$e_2=\delta\sigma e_2$}
	  }
    	  child {node[fill=white] [circle,inner sep=0mm, minimum size=1.7mm] {} 
	  edge from parent node [above right] {\footnotesize$e_3$}
	  };
\end{tikzpicture}
\captionme{The action of $\delsig$ on edges attached to a $\delsig$-fixed vertex $v\in\apt(EF')$, $q=f=a=2$}
\end{center}
\begin{proof}
Let $\alpha$ act on $\build(H_L)$ by $\alpha e = \delta\sigma(e)$. Since $\sigma^f$ acts as the identity on $\build(H_E)$, the action of $\alpha^f$ on $\build(H_L)$ is clearly the same as the action of $N(\delta)=\gamma$, and part (\ref{deltafixedone}) follows. Part (\ref{deltafixedtwo}) is proven in \cite[pp. $66$--$67$]{langlands}.
\end{proof}

 Our next task is to find the non-empty $\xwdz$ and compute their cardinalities.  Recall that the residue field of $E$ has cardinality $q^f$.
 
\paragraph{\textbf{$F'$ ramified over $F$}}
When $F'$ is ramified over $F$, $T$ never splits in $E$, but does in $EF'$. The relationship between $\build(H_E)$ and $\build(H_{EF'})$ is exactly analogous to that between $\build(H)$ and $\build(H_{F'})$. There is a unique point $p_T$ in $\build(H_E)$, the barycenter of an edge $e_T$ in $\build(H_E)$. This $p_T$ is a vertex of $\apt(EF')$.  Compare the following to lemma \ref{ramcount}.
 
\begin{lemma}\label{twramcount}Let $F'/F$ be ramified, let $s=\val_E(\det(\delta))$, let $\delta\in G_E$ be such that $N(\delta)=\gamma$ is semi-simple regular elliptic, and let $\Delta(\gamma)=q^{-a-\frac{d_T+1}{2}}$.
\begin{enumerate}
\item\label{twramcountone} If $s$ is odd, then the non-empty $\xwdz$ are those with $w=(\pm r,0,s)$, $r\geq 0$, with cardinality $q^{fr}$.
\item\label{twramcounttwo} If $s$ is even, then the non-empty $\xwdz$ are those with $w=(0,0,s)$, with cardinality $\frac{2q^{a+1}-q-1}{q-1}$, and $w=(r-1,1,s)$, or $(-r,1,s)$, $r\geq 1$,with cardinality $q^{fr}\left[q^a+\frac{q^{a}-1}{q-1}(1-q^{1-f})\right]$.
\end{enumerate}
\end{lemma}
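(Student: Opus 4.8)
The plan is to follow the template of Lemma~\ref{ramcount}, replacing $\gamma$ by the operator $\delta\sigma$ and Lemma~\ref{langlandsfixed} by Lemma~\ref{langlemmatwo}. First apply Theorem~\ref{distance} to $\alpha=(\delta,\sigma)\in G_E\times\gal(E,F)$ acting on $\build(G_E)$: part~(\ref{distone}) gives at once $s=\val_E\det(\delta)$, and parts~(\ref{disttwo})--(\ref{distthree}) reduce the computation of $X^0_w(\delta\sigma)$ to counting the edges $e$ of $\build(H_E)$ with $\inv(e,\delta\sigma(e))$ equal to the $\wa$-part of $w$. Since $N(\delta)=\gamma$, the argument in the proof of Lemma~\ref{toruslemma} shows $\delta\in T(E)\subseteq T(EF')$, so $\delta$ stabilizes $\apt(EF')$; as $\mathbf T$ is defined over $F$, $\sigma$ also stabilizes $\apt(EF')$, hence so does $\delta\sigma$. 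Therefore $\delta\sigma$ fixes the unique point $p_T$ of $\build(H_E)$ nearest $\apt(EF')$ and stabilizes the edge $e_T$, exactly as in section~\ref{ramsubsec}.

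Now split on the parity of $s$. When $s$ is odd, $\delta\sigma$ is not type-preserving on the vertices of $\build(H_E)$, so it fixes no vertex and must rotate $e_T$ about $p_T$, interchanging its two $E$-vertices; in particular it carries each side of $e_T$ to the other. For an edge $e$ with $\ell_E(e,e_T)=r\ge 0$ the minimal gallery from $e$ to $\delta\sigma(e)$ runs into $e_T$ and back out, so $\ell_E(e,\delta\sigma(e))=2r$ and Theorem~\ref{distance}(\ref{disttwo}),(\ref{distfour}) give $b=0$ and $|m|=r$. For $r=0$ only $e_T$ occurs, so $X^0_{(0,0,s)}(\delta\sigma)$ is a singleton; for $r\ge 1$ the far vertex of $e$ from $\delta\sigma(e)$ is its far vertex from $e_T$, whose type distinguishes the two sides of $e_T$, so Theorem~\ref{distance}(\ref{distthree}) places the $r$-sphere edges on one side into $X^0_{(-r,0,s)}(\delta\sigma)$ and those on the other into $X^0_{(r,0,s)}(\delta\sigma)$. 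As $\build(H_E)$ is $(q^f+1)$-regular, each side of $e_T$ carries $q^{fr}$ edges at gallery-distance $r$, which is part~(\ref{twramcountone}).

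When $s$ is even, $\delta\sigma$ is type-preserving, hence fixes $e_T$, and I analyse its fixed edges by the method of section~\ref{ramsubsec} with Lemma~\ref{langlemmatwo}(\ref{deltafixedone})--(\ref{deltafixedtwo}) in place of Lemma~\ref{langlandsfixed}. Comparing $\Delta(\gamma)=q^{-(a+(d_T+1)/2)}$ with the geometry of $\build(H_{EF'})$ and the barycentric embedding $\build(H_E)\hookrightarrow\build(H_{EF'})$ shows that the $\delta\sigma$-fixed edges of $\build(H_E)$ fill the $E$-gallery-ball of radius $a$ about $e_T$, and Lemma~\ref{langlemmatwo}(\ref{deltafixedtwo}) forces the branching of this fixed subtree $\mathcal F$ to be $q$ at each interior fixed vertex. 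Thus $\mathcal F$ consists of $e_T$ together with $q^i$ edges per side at depth $i$ for $1\le i\le a$, all of which lie in $X^0_{(0,0,s)}(\delta\sigma)$ by Theorem~\ref{distance}; hence $\#X^0_{(0,0,s)}(\delta\sigma)=1+2\sum_{i=1}^a q^i=\frac{2q^{a+1}-q-1}{q-1}$. For $e$ at gallery-distance $r\ge 1$ from $\mathcal F$, the nearest edge of $\mathcal F$ is the common projection of $e$ and $\delta\sigma(e)$ onto $\mathcal F$, so $\ell_E(e,\delta\sigma(e))=2r-1$ and Theorem~\ref{distance} gives $b=1$, $m\in\{r-1,-r\}$, with the two values again separated by the two sides of $e_T$. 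To count such $e$ on one side, follow the geodesic from $e$ back towards $e_T$: it exits $\mathcal F$ through a non-fixed edge at a fixed vertex, and by Lemma~\ref{langlemmatwo}(\ref{deltafixedtwo}) there are $q^f-q$ such edges at the $E$-vertex of $e_T$ on that side and at each far vertex of an $\mathcal F$-edge of depth $1,\dots,a-1$, while there are $q^f$ at each of the $q^a$ depth-$a$ leaves of $\mathcal F$ (which carry no outward fixed edge); beyond the exit the tree branches by $q^f$. Summing the contributions gives
\begin{align*}
\#X^0_{(r-1,1,s)}(\delta\sigma)=\#X^0_{(-r,1,s)}(\delta\sigma)
&=q^{f(r-1)}\left[(q^f-q)\sum_{i=0}^{a-1}q^{i}+q^{f+a}\right]\\
&=q^{fr}\left[q^a+\frac{q^a-1}{q-1}(1-q^{1-f})\right],
\end{align*}
which is part~(\ref{twramcounttwo}).

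I expect the main obstacle to be the count in the even-$s$, $r\ge 1$ case. Unlike the $\gamma$-fixed set in section~\ref{ramsubsec}, which is a full ball in $\build(H)$, the set $\mathcal F$ is a ``thin'' subtree of branching $q$ sitting inside the ``fat'' tree $\build(H_E)$ of branching $q^f$; consequently an edge at relative distance $r$ from $\mathcal F$ may leave $\mathcal F$ at any depth $0\le i\le a$, and the sum must be organized so that the depth-$a$ leaves of $\mathcal F$ are treated separately from the interior fixed vertices. A subsidiary point that also needs care is the exact radius of $\mathcal F$ in $\build(H_E)$: this requires tracking $p_T$ through the barycentric embedding $\build(H_E)\hookrightarrow\build(H_{EF'})$ and using the strict inequality in Lemma~\ref{langlemmatwo}(\ref{deltafixedtwo}), and it is here that the parameter $d_T$ enters. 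Finally, the even split between the two $\wa$-elements of a given length follows, as in Lemma~\ref{ramcount}, from the isometry of $\build(H_E)$ interchanging the two sides of $e_T$ together with Theorem~\ref{distance}(\ref{distthree}).
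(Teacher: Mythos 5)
Your proof is correct and follows essentially the same route as the paper: reduce to edge-counting in $\build(H_E)$ via Theorem~\ref{distance}, use Lemma~\ref{langlemmatwo} in place of Lemma~\ref{langlandsfixed}, handle odd $s$ by flipping $e_T$ as in Lemma~\ref{ramcount}(\ref{ramcountodd}), and for even $s$ sum over exits from the thin $q$-branching fixed subtree sitting in the $q^f$-branching tree. The only cosmetic difference is that you organize the even-$s$, $r\ge1$ count by fixed vertices (tracking exits at depths $0,\dots,a-1$ and separately at the depth-$a$ leaves) whereas the paper organizes it by fixed edges; both bookkeepings produce the identical sum $q^{fr}\bigl[q^a+(1-q^{1-f})\tfrac{q^a-1}{q-1}\bigr]$.
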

\begin{proof}
We have already shown above that the size of any $w$ such that $\xwdz$ is non-empty must be $s$. Suppose first that $s$ is odd. The apartment $\apt(F')$ is the same as the apartment $\apt(EF')$ because $EF'$ is unramified over $F'$, and the building $\build(H_{F'})$ lies within $\build(H_{EF'})$ as the fixed point set of $\gal(EF',F')$. 
In addition to $e_T$ there is its $\build(H)$ analogue, the edge $\tilde{e}_T$ in $\build(H)$ closest to $\apt(F')$ within $\build(H_{F'})$. We must have $e_T=\tilde{e}_T$, as $\tilde{e}_T$ is also in $\build(H_E)$ and they share their barycenter.

Since $e_T$ lies in $\build(H)$, $\sigma$ fixes $e_T$ point-wise. We know that $\delta$ stabilizes $e_T$ for the same reasons that $\gamma$ stabilized the edge $e_T$ in section \ref{ramsubsec} . When $s$ is odd, $\delta$ has no fixed point in $\build(H_{E})$, so it must flip $e_T$ end to end, and we find ourselves yet again in the same situation as in the proofs of lemmas \ref{ramcount} (\ref{ramcountodd}),
 and (\ref{twramcountone}) is proven.

When $s$ is even, $\delta$ moves vertices an even distance, and so it and $\delsig$ must fix $e_T$. Lemma \ref{langlemmatwo} (\ref{deltafixedtwo}) and our analysis from section \ref{ramsubsec} tell us that attached to each vertex of $e_T$ are $q$ $\delsig$-fixed edges at distance $1$ from $e_T$, $q^{2}$ such edges at distance $2$, and, in general, $q^{i}$ edges at distance $i$, for all $1\leq i\leq a$. Added all together, this yields a total of 
\begin{align*}
1+2\sum_{i=1}^{a}q^{i} &= 2\sum_{i=0}^a q^{i} -1\\
&= \frac{2(q^{a+1}-1)}{q-1}-1\\
&= \frac{2q^{a+1}-q-1}{q-1}
\end{align*}
fixed edges. We know that all fixed edges have relative distance $(0,0,s)$, yielding the first half of (\ref{twramcounttwo}).

Now fix $r\geq 1$; we wish to find all possible relative distances $\inv(e,\delsig(e))$  of $e$ distance $r$ away from $\build(H_{E})^{\delsig}$. This situation is similar to that in the proof of lemma \ref{ramcount} (\ref{ramcounteven}). We see immediately that: $\ell(e,\delsig e)=2r-1$, so that $b=1$; the minimal gallery from $e$ to $\delsig e$ begins with the farthest vertex of $e$ from $\build(H_{E})^{\delsig}$; exactly half such edges have their farthest vertex from $\build(H_{E})^{\delsig}$ of type one, while the other half has type zero; and therefore $w=\inv(e,\delsig(e))=(-r,1,s)$ or $(r-1,1,s)$, with half of the edges at distance $r$ from $\build(H_{E})^{\delsig}$ of one relative distance and half of the other. Thus, to determine the cardinality of $X^0_w(\delsig)$ for either $w$, we need only count all the edges on one side of $e_T$ that are $r$ away from $\build(H_{E})^{\delsig}$.

Our main tool is lemma \ref{langlemmatwo} (\ref{deltafixedtwo}). On one side of $e_T$, there are $(q^f-q)$ edges attached to it that are not $\delsig$-fixed, and attached to each of those there are $q^{f(r-1)}$ edges distance $r$ from $e_T$, for a total of $q^{fr}(1-q^{1-f})$ edges whose nearest $\delsig$-fixed point is in $e_T$. In the same manner there are $q^i$ $\delsig$-fixed edges distance $0\leq i<a$ from $e_T$, and for each such edge $e'$ there are $q^{fr}(1-q^{1-f})$ edges whose nearest $\delsig$ fixed point is in $e'$ and which are distance $r$ from $e'$, for a total of $q^iq^{fr}(1-q^{1-f})$ edges a distance $r$ from $\build(H_{E})^{\delsig}$ and whose nearest point in $\build(H_{E})^{\delsig}$ is distance $i$ from $e_T$. Since there are no $\delsig$-fixed edges at a distance greater than $a$ from $e_T$, there are $q^aq^{fr}$ edges distance $r$ from $\build(H_{E})^{\delsig}$ whose nearest $\delsig$-fixed edge is distance $a$ from $e_T$. Added all together, this gives us, for $w=(-r,1,s)$ or $(r-1,1,\vd(\delsig))$,
\begin{align*}
\#X^0_w(\delsig)&= q^aq^{fr}+\sum_{i=0}^{a-1}q^i(1-q^{1-f})q^{fr}\\
&= q^{fr}\left[q^a+(1-q^{1-f})\frac{q^a-1}{q-1}\right].
\end{align*}
\end{proof} 
 
\paragraph{\textbf{$F'$ unramified over $F$}}
Compare the following to lemma \ref{unramcount}.
\begin{lemma}\label{twunramcount}Let $F'/F$ be unramified, let $\gamma$ be in standard form, let $s=\val_F\det(\gamma)$, and let $\Delta_F(\gamma)=q^{-a}$.
\begin{enumerate}
\item\label{twunramcountone} Suppose that $\gamma$ splits in $E$ and has $s$ odd. Then the non-empty $\xwdz$ are those with $w=(\pm r,0,s)$, $r\geq 0$, with cardinality $q^{fr}$.
\item\label{twunramcounttwo} Suppose that $\gamma$ either splits in $E$ with $s$ even or $\gamma$ does not split in $E$. Then the non-empty $\xwdz$ are those with $w=(0,0,s)$, $(r-1,1,s)$, or $(-r,1,s)$, $r\geq 1$. In either case the cardinality of $X_{(0,0,s)}^0(\delsig)$ is $\left(\frac{q+1}{q-1}\right)(q^a-1)$. When $\gamma$ does not split in $E$, we have
\begin{align*}
\#X_{(-r,1,s)}^0(\delta\sigma) &= \begin{cases} q^{fr}\left[(1-q^{1-f})\frac{q^{a-1}-1}{q-1}+(q+1)q^{a-1}\right], & a+r\equiv0\mod 2\\
 				 q^{fr}(1-q^{1-f})\frac{q^a-1}{q-1}, & a+r\equiv 1\mod 2\end{cases}\\
 \#X_{(r-1,1,s)}^0(\delta\sigma) &= \begin{cases}
  q^{fr}\left[(1-q^{1-f})\frac{q^{a-1}-1}{q-1}+(q+1)q^{a-1}\right], & a+r\equiv 1\mod 2\\
 				 q^{fr}(1-q^{1-f})\frac{q^a-1}{q-1}, & a+r\equiv 0\mod 2\end{cases}
\end{align*}
When $\gamma$ splits in $E$ and $s$ is even, then the cardinalities of $X_{(r-1,1,s)}^0(\delsig)$ and $X_{(-r,1,s)}^0(\delsig)$ are the same as the above if the difference of the valuations of the eigenvalues of $\delta$ is divisible by $4$; otherwise we have
\begin{align*}
\#X_{(r-1,1,s)}^0(\delta\sigma) &= \begin{cases} q^{fr}\left[(1-q^{1-f})\frac{q^{a-1}-1}{q-1}+(q+1)q^{a-1}\right], & a+r\equiv0\mod 2\\
 				 q^{fr}(1-q^{1-f})\frac{q^a-1}{q-1}, & a+r\equiv 1\mod 2\end{cases}\\
 \#X_{(-r,1,s)}^0(\delta\sigma) &= \begin{cases}
  q^{fr}\left[(1-q^{1-f})\frac{q^{a-1}-1}{q-1}+(q+1)q^{a-1}\right], & a+r\equiv 1\mod 2\\
 				 q^{fr}(1-q^{1-f})\frac{q^a-1}{q-1}, & a+r\equiv 0\mod 2\end{cases}
\end{align*}
\end{enumerate}
\end{lemma}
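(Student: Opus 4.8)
The plan is to mimic the proofs of Lemmas \ref{ramcount}, \ref{unramcount} and \ref{twramcount}: identify the fixed–edge set $\build(H_E)^{\delta\sigma}$, use Theorem \ref{distance} to read off $\inv(e,\delta\sigma e)=(m,b,s)$ for an edge $e$ at $E$–distance $r$ from that set, and then count by radial enumeration in the $(q^f+1)$–valent tree $\build(H_E)$. As noted before the lemma, the size of any $w$ with $\xwdz\neq\emptyset$ is $\val_E\det\delta$; twisted conjugation acts on $\delta\sigma$ as a building isometry and so leaves every cardinality $\#X^0_w(\delta\sigma)$ unchanged, so by Lemma \ref{associated} (\ref{assocone}) we may assume $N(\delta)=\gamma$, and then $\delta\in Z_{G_E}(\gamma)=T(E)$ since $\gamma$ is regular; hence $\delta\sigma$ stabilizes $\apt(EF')$, and $\apt(E)$ as well when $T$ splits over $E$.

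To locate the fixed set: by Lemma \ref{langlemmatwo} (\ref{deltafixedone}) one has $\build(H_E)^{\delta\sigma}\subseteq\build(H_E)^\gamma$, and since $EF'/E$ is unramified the edges of $\build(H_E)$ are edges of $\build(H_{EF'})$, so Lemma \ref{langlandsfixed} together with the valuation bookkeeping for $\Delta$ shows $\build(H_E)^\gamma$ is the set of edges at $E$–distance $\le a$ from a ``core'' — the standard Galois–fixed vertex $v_0$ if $T$ does not split over $E$, and $\apt(E)$ if it does. If $T$ does not split over $E$, then $\delta$ has eigenvalues of equal valuation over $EF'$ and therefore fixes $\apt(EF')$ pointwise, so $\delta\sigma|_{\apt(EF')}=\sigma|_{\apt(EF')}$ fixes precisely $v_0$; thus $v_0\in\build(H_E)^{\delta\sigma}$ and, by Lemma \ref{langlemmatwo} (\ref{deltafixedtwo}), $\build(H_E)^{\delta\sigma}$ is a $(q+1)$–regular radius–$a$ subtree about $v_0$ with $\sum_{i=1}^a(q+1)q^{i-1}=\frac{q+1}{q-1}(q^a-1)$ edges, which is $\#X^0_{(0,0,s)}(\delta\sigma)$. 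If $T$ splits over $E$, then $\delta\sigma|_{\apt(E)}$ is the composite of the reflection of the line induced by $\sigma$ about $v_0$ with the translation by $\delta$; it is an edge–flip or a reflection about a vertex $v'$ according as $\val_E\det\delta$ is odd or even, with $v'$ at distance $\frac12|\val_E\mu_1-\val_E\mu_2|$ from $v_0$ ($\mu_1,\mu_2$ the eigenvalues of $\delta$), so $v'$ has type $\equiv\frac12(\val_E\mu_1-\val_E\mu_2)\pmod{2}$, and in this case $\build(H_E)^{\delta\sigma}$ is again a $(q+1)$–regular radius–$a$ ball, now about $v'$.

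The count then follows one of two patterns. When $\delta\sigma$ flips an edge $e_T$ (part (\ref{twunramcountone}), and the odd–$\val_E\det\delta$ subcase of a split torus), the argument of Lemma \ref{twramcount} (\ref{twramcountone}) transfers verbatim: $\ell(e,\delta\sigma e)=2r$ forces $b=0$ by Theorem \ref{distance} (\ref{disttwo}), the two sides of $e_T$ split each distance–$r$ shell evenly between $X^0_{(r,0,s)}$ and $X^0_{(-r,0,s)}$ by Theorem \ref{distance} (\ref{distthree}), and radial counting gives $q^{fr}$ per side. When $\build(H_E)^{\delta\sigma}$ is a $(q+1)$–regular radius–$a$ ball $Y$ about a vertex $v_\ast$, then for $e$ at distance $r\ge1$ from $Y$ one has $\ell(e,\delta\sigma e)=2r-1$, so $b=1$, and the minimal gallery starts at the vertex of $e$ farthest from $Y$, whose type is $\equiv i+r+(\text{type of }v_\ast)\pmod{2}$ with $i$ the core–distance of the nearest vertex of $Y$; Theorem \ref{distance} (\ref{distthree}),(\ref{distfour}) then give $m=-r$ or $m=r-1$ accordingly. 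Summing over the vertices of $Y$ at each core–distance $i<a$ (each with $q^f-q$ non–fixed directions, by Lemma \ref{langlemmatwo} (\ref{deltafixedtwo})) and over the leaves of $Y$ (each with $q^f$), with branching factor $q^f$ beyond, produces the two displayed values $q^{fr}\big[(1-q^{1-f})\frac{q^{a-1}-1}{q-1}+(q+1)q^{a-1}\big]$ and $q^{fr}(1-q^{1-f})\frac{q^a-1}{q-1}$, the leaf term $(q+1)q^{a-1}$ going to whichever of $X^0_{(-r,1,s)}$, $X^0_{(r-1,1,s)}$ collects the leaves of $Y$. For $v_\ast=v_0$ (type $0$, the non–split case) this is decided by the parity of $a+r$, giving the first pair of displays; for $v_\ast=v'$ the type of $v'$ reverses the dichotomy exactly when $\val_E\mu_1-\val_E\mu_2\equiv2\pmod{4}$, which is the swapped last display.

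The hard part will be the split–torus clause of the second paragraph. Everything else is careful but routine assembly of Lemmas \ref{langlandsfixed}, \ref{langlemmatwo}, \ref{toruslemma}, \ref{twramcount} and Theorem \ref{distance}. The genuinely delicate point is pinning down the restriction of $\delta\sigma$ to the line $\apt(E)$ when $T$ splits over $E$: that it is a reflection, whether its center is a vertex or an edge–midpoint, where that center sits, and — crucially — that the \emph{type} of the fixed vertex, hence the interchange of two of the final counts, is controlled by $\val_E\mu_1-\val_E\mu_2$ modulo $4$ rather than modulo $2$. This requires an explicit unwinding of the actions of $\delta\in T(E)$ and of $\sigma$ on $\apt(E)$, e.g.\ using the standard form of $\gamma$ and the conjugator exhibited in section \ref{lunram}.
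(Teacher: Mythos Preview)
Your proposal is essentially the paper's own proof: identify the $\delta\sigma$--fixed ``core'' in $\apt$ (edge midpoint when $T$ splits over $E$ and $s$ is odd, a vertex $v_\ast$ otherwise), invoke Lemma~\ref{langlemmatwo} to grow it into a $(q+1)$--regular radius--$a$ ball, then radially enumerate edges at distance $r$ and sort them into $X^0_{(-r,1,s)}$ versus $X^0_{(r-1,1,s)}$ by the type of the vertex of $e$ farthest from the ball, with the final mod--$4$ dichotomy coming from solving $\delta\sigma(x)=-x+(m-n)$ on $\apt(E)$.

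One small wrinkle in your non--split paragraph: the sentence ``$\delta\sigma|_{\apt(EF')}=\sigma|_{\apt(EF')}$ fixes precisely $v_0$'' is not quite right as stated. Since $EF'/F'$ is unramified, $\apt(EF')=\apt(F')$, and $\sigma\in\gal(E/F)$ fixes $F'$ and hence fixes all of $\apt(F')$ pointwise; so $\delta\sigma$ fixes the whole apartment, not just $v_0$. What you actually need (and what the paper uses) is that $v_0$ is the unique point of $\apt(EF')$ lying in $\build(H_E)$, so it is the unique $\delta\sigma$--fixed point of $\build(H_E)$ on that apartment. Replace ``fixes precisely $v_0$'' by this observation and the argument goes through unchanged.
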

\begin{proof}
 Let us handle (\ref{twunramcountone}) first. 
The apartments $\apt(F')$ and $\apt(E)$ are the same (since $E$ is unramified over $F'$). Then $\sigma$ acts non-trivially on $\apt(F')=\apt(E)$ since $\sigma^{f/2}$ generates $\gal(E,F)$. The only non-trivial Galois action on $\apt(F')$ is flipping it end to end, and so $\sigma$ does the same to $\apt(E)$. In the case under consideration $s$ is odd, so that the valuation over $E$ of the eigenvalues of $\delta$ cannot be equal. We know from above that $\delta\in T(E)$, and so $\delta$ acts on $\apt(E)$ by shifting it an odd distance left or right. Putting these actions together, it is easy to see that $\delsig$ fixes a single point $p$ of $\apt(E)$, and that $p$ is not a vertex in the building $\build(H_{E})$. Let $e_T$ be the edge to which $p$ belongs. Then $e_T$ is flipped end to end, and our result follows from the proof of lemma \ref{ramcount} (\ref{ramcountodd}).

The formulae in (\ref{twunramcounttwo}) are more complicated, owing entirely to the structures of the buildings involved and lemma \ref{langlemmatwo}.
We have two cases to consider; the hard work in both cases is identical, and stems from showing that the action of $\delsig$ fixes precisely one point in $\build(H_{E})$ that also lies in $\apt$ (over an appropriate field), and that this point is a vertex. Let us postpone the proof that both cases possess this property, and simply assume that we know that this is the case. Let us call this sole fixed point $v$. Then lemma \ref{langlemmatwo} tells us that there are exactly $q+1$ edges in $\build(H_{E})$ with $v$ as a vertex that are also fixed by $\delsig$. For the same reason each of those edges is adjacent to $q$ different edges (at a distance $2$ from $\apt$) which are $\delsig$-fixed; and so on. Comparison to our earlier work in the proof of \ref{unramcount} shows us that all of these fixed edges lie in $X_{(0,0,s)}^0(\delsig)$, and that there are exactly $\left(\frac{q+1}{q-1}\right)(q^a-1)$ of them. 

Now we must count the non-fixed edges. Let $e$ be such an edge, lying at distance $r\geq 1$ from the nearest $\delsig$-fixed edge. Our earlier work in the proof of lemma \ref{unramcount} on the possible $w=\inv(e,\gamma e)$ when $e$ is distance $r$ from the nearest $\gamma$-fixed edge applies identically in this case, showing us that $w=(r-1,1,s)$ or $(-r,1,s)$, depending on whether the farthest vertex of $e$ from $\build(H_{E})^{\delsig}$ is type one or type zero, respectively. Call this vertex $v_e$, and denote by $k_e$ the distance from $v_e$ to $v$.

Let $w_1=(r-1,1,s)$ and $w_2=(-r,1,s)$. If $v$ has type zero, then when $k_e$ is odd, $v_e$ has type one, and so $e$ lies in $X^0_{w_1}(\delsig)$; when $e_k$ is even, $v_e$ has type zero and $e$ lies in $X^0_{w_2}(\delsig)$. When $v$ has type $1$, then the opposite occurs; that is, if $\# X^0_{w_1}(\delsig)=x$ and $\# X^0_{w_2}(\delsig)=y$ when $v$ has type zero, then $\# X^0_{w_1}(\delsig)=y$ and $\# X^0_{w_2}=x$ when $v$ has type one. Thus proving the lemma for $v$ type $0$ and showing that the type of $v$ is one if and only if $\gamma$ splits in $E$ and $4$ does not divide the difference of the valuations of the eigenvalues of $\delta$ will suffice to finish the lemma.

Consider first those edges who nearest $\delsig$-fixed point is $v$.
  There are $q^f+1$ edges in $\build(H_{E})$ with $v$ as a facet; lemma \ref{langlemmatwo} tells us that precisely $q+1$ of them are fixed, leaving exactly $q^f-q$ not-fixed edges at distance one from $v$. Being distance one from $v$, which has type zero, all of these edges must lie in $X^0_{w_1}(\delsig)$. 

Attached to the $q^f-q$ not-$\delsig$-fixed edges at distance one from $v$ are a total of  $(q^f-q)q^f$ edges, all of which must lie in $X^0_{w_2}(\delsig)$. Continuing with this line of reasoning, we see that, altogether, there are $q^{fr}(1-q^{1-f})$ edges in the tree at a distance $r$ from $v$ and whose nearest $\delsig$-fixed point is $v$, and these edges lie $X^0_{w_1}(\delsig)$ if $r$ odd and $X^0_{w_2}(\delsig)$ if $r$ is even.
	
\begin{center}
\begin{tikzpicture}
[	sibling angle=36,
	axis/.style={<->,>=stealth'},every circle node/.style={draw,circle,
			inner sep=0mm},scale=.5,grow'=up,
	level 1/.style={level distance=2.5cm,sibling distance=2cm},
	level 2/.style={level distance=2.5cm,grow cyclic},
	level 4/.style={level distance=2.5cm},
	level 5/.style={level distance=2.5cm},
	level 3/.style={level distance=2.5cm}]
  	\draw[white,myBG] (-8,0) -- (8,0);
  	\draw[black,] (-8,0) -- (8,0);
  	\draw (-8,0) node[left] {$\ldots$}; 
  	\draw (8,0) node[right] {$\ldots$};
  	\node at (11,0) {$\apt$};  			
	  \foreach \pos in {0,2.5,5,7.5,-2.5,-5,-7.5}{
	    \node[fill=white] at (\pos,0) [circle, minimum size=1.8mm] {};
	    \node[fill=gray] at (\pos,0) [circle, minimum size=1.7mm] {}; 
	}
	    \node[fill=black] at (0,0) [circle, minimum size=1.7mm] {}; 
	    \node at (0,-.2) [below] {{\footnotesize$v$}};
	    \node[fill=black] at (0,0) [circle, minimum size=1.7mm] {} 
		child {node [fill=white] [circle,inner sep=0mm, minimum size=1.7mm] {} 
		   child[rotate=180]{node [thin,fill=black] [circle,inner sep=0mm, minimum size=1.7mm] {}
		   edge from parent[thin] node [above] {{\footnotesize$\delsig e_1$}}
		   }
		   child[rotate=180]{node [thin,fill=black] [circle,inner sep=0mm, minimum size=1.7mm] {}
		   edge from parent[thin] node [below] {{\footnotesize$e_1$}}
		   }
		edge from parent[line width=1mm]
		}
		child {node [thin,fill=white] [circle,inner sep=0mm, minimum size=1.7mm] {}
		   child[rotate=120]{node [thin,fill=black] [circle,inner sep=0mm, minimum size=1.7mm] {} 
			child {node [thin,fill=white] [circle,inner sep=0mm, minimum size=1.7mm] {} 
		   	edge from parent[thin] node [right] {{\footnotesize$\delsig e_3$}}
			}
			child {node [thin,fill=white] [circle,inner sep=0mm, minimum size=1.7mm] {} 
		   	edge from parent[thin] node [left] {{\footnotesize$e_3$}}
			}
		   }
		   child[rotate=120]{node [thin,fill=black] [circle,inner sep=0mm, minimum size=1.7mm] {}
			child {node [thin,fill=white] [circle,inner sep=0mm, minimum size=1.7mm] {} 
		   	edge from parent[thin] node [right] {{\footnotesize$\delsig e_2$}}
			}
			child {node [thin,fill=white] [circle,inner sep=0mm, minimum size=1.7mm] {} 
		   	edge from parent[thin] node [below] {{\footnotesize$e_2$}}
			}
		   }
		edge from parent[line width=1mm]
		}
	    	child{node[thin,fill=white] [circle,inner sep=0mm, minimum size=1.7mm] {} edge from parent 
		edge from parent[thick]
		}
		child {node [thin,fill=white] [circle,inner sep=0mm, minimum size=1.7mm] {} 
		   child[rotate=60]{node [thin,fill=black] [circle,inner sep=0mm, minimum size=1.7mm] {}
			child {node [thin,fill=white] [circle,inner sep=0mm, minimum size=1.7mm] {} 
		   	edge from parent[thin] node [below right] {{\footnotesize$\delsig e_6$}}
			}
			child {node [thin,fill=white] [circle,inner sep=0mm, minimum size=1.7mm] {} 
		   	edge from parent[thin] node [left] {{\footnotesize$e_6$}}
			}
		   }
		   child[rotate=60]{node [thin,fill=black] [circle,inner sep=0mm, minimum size=1.7mm] {}
			child {node [thin,fill=white] [circle,inner sep=0mm, minimum size=1.7mm] {} 
		   	edge from parent[thin] node [right] {{\footnotesize$\delsig e_5$}}
			}
			child {node [thin,fill=white] [circle,inner sep=0mm, minimum size=1.7mm] {} 
		   	edge from parent[thin] node [left] {{\footnotesize$e_5$}}
			}
		   }
		edge from parent[line width=1mm]
		}
		child {node [thin,fill=white] [circle,inner sep=0mm, minimum size=1.7mm] {} 
		   child{node [thin,fill=black] [circle,inner sep=0mm, minimum size=1.7mm] {}
		   edge from parent[thin] node [below] {{\footnotesize$\delsig e_7$}}
		   }
		   child{node [thin,fill=black] [circle,inner sep=0mm, minimum size=1.7mm] {}
		   edge from parent[thin] node [above] {{\footnotesize$e_7$}}
		   }
		edge from parent[line width=1mm]
		};
	    \node[grow=up] at (0,5) [thin,fill=black] [circle,inner sep=0mm, minimum size=1.7mm] {} 
	    		    child{node[thin,fill=white] [circle,inner sep=0mm, minimum size=1.7mm] {} 
	    			child[rotate=90] {node[thin,fill=black] [circle,inner sep=0mm, minimum size=1.7mm] {}
					edge from parent [thin] node [right] {{\footnotesize$\delsig e_4$}}}
				child[rotate=90] {node[thin,fill=black] [circle,inner sep=0mm, minimum size=1.7mm] {}
					edge from parent [thin] node [left] {{\footnotesize$e_4$}}}
			    edge from parent[line width=1mm] {}
			};
	\node at (0,4.0) {$\vdots$};
\end{tikzpicture}
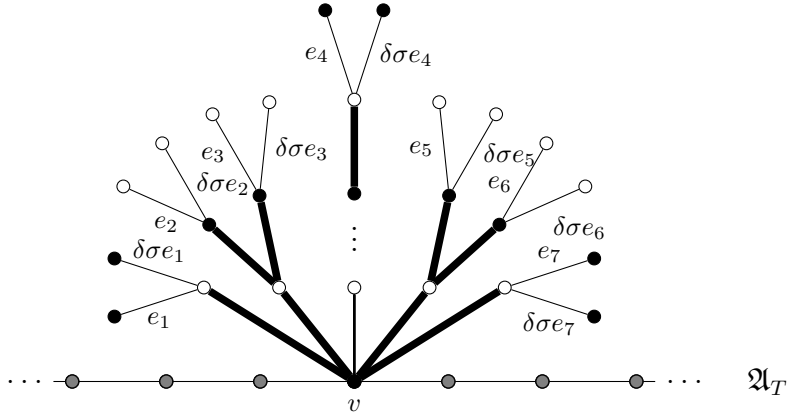
\captionof{figure}[The action of $\delsig$ on $\build(H_E)$, $F'/F$ unramified]{Edges distance one from $\build(H_E)^{\delsig}$. Here $\build(H_E)^{\delsig}$ is in bold. Note that the type of the farthest vertex of an edge depends on the distance from the $\delsig$-fixed point nearest that edge to $v$.}
\end{center}

Now consider those edges whose nearest $\delsig$-fixed point is distance $0<d<a$ from $v$. 
We know from the second paragraph of this proof that there are exactly $(q+1)q^{d-1}$ vertices of $\build(H_E)^{\delsig}$ distance $d$ from $v$, and, again, lemma \ref{langlemmatwo} shows that for each such vertex $v'$ there are $q^{fr}(1-q^{1-f})$ edges at distance $r$ from $\build(H_{E})^{\delsig}$ and whose nearest $\delsig$-fixed point is $v'$. This makes for a total of $(q+1)(1-q^{1-f})q^{fr}q^{d-1}$ edges distance $r$ from $\build(H_E)^{\delsig}$ whose nearest $\delsig$-fixed point is distance $d$ from $v$. By construction  $k_e=r+d$ for each such edge $e$, and so all of such edges lie in $X^0_{w_1}(\delsig)$ if $r+d$ is odd and $X^0_{w_2}(\delsig)$ if $r+d$ is even. 

There are no $\delsig$-fixed edges distance farther than $a$, so for any $\delsig$-fixed vertex $v'$ distance $a$ from $v$, the number of edges distance $r$ from $\build(H_E)^{\delsig}$ whose nearest $\delsig$-fixed point is $v'$ is $q^{fr}$. Lemma \ref{langlemmatwo} tell us that there are $(q+1)q^a$ such vertices $v'$, and so there are a total of  $(q+1)q^{a-1}q^{fr}$ edges distance $r$ from $\build(H_E)^{\delsig}$ whose nearest $\delsig$-fixed point is distance $a$ from $v$. As before, all such edges lie in $X_{w_1}^0(\delsig)$ when $a+r$ is odd and in $X^0_{w_2}(\delsig)$ when $a+r$ is even.

In order to actually determine the cardinalities of $X^0_{w_1}(\delsig)$ and $X_{w_2}^0(\delsig)$, we will need to break into cases. Specifically, for a fixed $r\geq 1$, we must determine the cardinalities of these sets in the case when both $r$ and $a$ are odd; $r$ is odd and $a$ is even; $r$ is even and $a$ is odd; and both $r$ and $a$ are even. We will be assuming that $v$ has type zero until explicitly stated otherwise.

\subparagraph*{\textbf{$r$ and $a$ odd.}}
When $r$ is odd, $X_{w_1}^0(\delsig)$ is comprised of those edges whose nearest $\delsig$-fixed point is an even distance from $v$, while $X_{w_2}^0(\delsig)$ is comprised of those edges whose nearest $\delsig$-fixed point is an  odd distance from $v$. Suppose that $a$ is also odd. Then for $X^0_{w_1}(\delsig)$ we add in all the edges corresponding to $d=0$, $2$, $\cdots$, $a-1$, so that
\begin{align*}
\# X^0_{w_1} &= q^{fr}(1-q^{1-f})\left[1+(q+1)\sum_{i=1}^{\frac{a-1}{2}} q^{2i-1}\right]\\
	&= q^{fr}(1-q^{1-f})\left[1+(q+1)q\sum_{i=1}^{\frac{a-1}{2}} q^{2i-2}\right]\\
	&= q^{fr}(1-q^{1-f})\left[1+(q+1)q\sum_{i=0}^{\frac{a-1}{2}-1} q^{2i}\right]\\
	&= q^{fr}(1-q^{1-f})\left[1+(q+1)q\frac{q^{a-1}-1}{q^2-1}\right]\\
	&= q^{fr}(1-q^{1-f})\left[1+q\frac{q^{a-1}-1}{q-1}\right]\\
	&= q^{fr}(1-q^{1-f})\frac{q^a-1}{q-1},
\end{align*}
and for $X^0_{w_2}(\delsig)$ we add in all those edges corresponding to $d=1$, $3$, $\cdots$, $a-2$, and $a$, giving
\begin{align*}
\# X^0_{w_2} &= q^{fr}(1-q^{1-f})\left[(q+1)\sum_{i=1}^{\frac{a-1}{2}} q^{(2i-1)-1}\right]+q^{fr}(q+1)q^{a-1}\\
	&= q^{fr}(1-q^{1-f})\left[(q+1)\sum_{i=0}^{\frac{a-1}{2}-1} q^{2i}\right]+q^{fr}(q+1)q^{a-1}\\
	&= q^{fr}(1-q^{1-f})\left[(q+1)\frac{q^{a-1}-1}{q^2-1}\right]+q^{fr}(q+1)q^{a-1}\\
	&= q^{fr}\left[(1-q^{1-f})\frac{q^{a-1}-1}{q-1}+(q+1)q^{a-1}\right].
\end{align*}

\subparagraph*{\textbf{$r$ odd, $a$ even.}}Now suppose that $a$ is even. Then for $X^0_{w_1}(\delsig)$ we add in all the edges corresponding to $d=0$, $2$, $\cdots$, $a-2$, and $a$, which yields
\begin{align*}
\# X^0_{w_1} &= q^{fr}(1-q^{1-f})\left[1+(q+1)\sum_{i=1}^{\frac{a-2}{2}} q^{2i-1}\right]+q^{fr}(q+1)q^{a-1}\\
	&= q^{fr}(1-q^{1-f})\left[1+(q+1)q\sum_{i=1}^{\frac{a-2}{2}} q^{2i-2}\right]+q^{fr}(q+1)q^{a-1}\\
	&= q^{fr}(1-q^{1-f})\left[1+(q+1)q\sum_{i=0}^{\frac{a-2}{2}-1} q^{2i}\right]+q^{fr}(q+1)q^{a-1}\\
	&= q^{fr}(1-q^{1-f})\left[1+(q+1)q\frac{q^{a-2}-1}{q^2-1}\right]+q^{fr}(q+1)q^{a-1}\\
	&= q^{fr}(1-q^{1-f})\left[1+q\frac{q^{a-2}-1}{q-1}\right]+q^{fr}(q+1)q^{a-1}\\
	&= q^{fr}\left[(1-q^{1-f})\left(\frac{q^{a-1}-1}{q-1}\right)+(q+1)q^{a-1}\right],
\end{align*}
and for $X^0_{w_2}(\delsig)$ we add in all those edges corresponding to $d=1$, $3$, $\cdots$, $a-1$, resulting in
\begin{align*}
\# X^0_{w_2} &= q^{fr}(1-q^{1-f})\left[(q+1)\sum_{i=1}^{\frac{a}{2}} q^{(2i-1)-1}\right]\\
	&= q^{fr}(1-q^{1-f})\left[(q+1)\sum_{i=0}^{\frac{a}{2}-1} q^{2i}\right]\\
	&= q^{fr}(1-q^{1-f})\left[(q+1)\frac{q^{a}-1}{q^2-1}\right]\\
	&= q^{fr}(1-q^{1-f})\frac{q^{a}-1}{q-1}.
\end{align*}

\subparagraph*{\textbf{$r$ even, $a$ even or odd.}}When $r$ is even, we get almost exactly the same as the above; the only difference is that the edges in $X^0_{w_1}(\delsig)$ are those with $d$ odd while those in $X^0_{w_2}(\delsig)$ are those with $d$ even; that is, we get the same formulae as for the case of odd $r$, with the roles of $X^0_{w_1}(\delsig)$ and $X^0_{w_2}(\delsig)$ switched, as desired.

Two things remain to be proven: first, that the action of $\delsig$ fixes precisely one point, $v$, of $\build(H_{E})$ that also lies in $\apt$, and that $v$ is a vertex; and second, that this vertex is type zero except when $\gamma$ splits in $E$ and the difference of the $E$-valuations of the eigenvalues of $\delta$ is not divisible by $4$. The second fact will be proven in the course of investigating the first. Note that we are no longer assuming $v$ is type zero; instead we are determining what types it can have, and when.

 Consider first the case where $\gamma$ does not split in $E$. Then $\delta$ also does not split in $E$, since it is an element of the same elliptic torus as $\gamma$. We analyze the action of $\delsig$ by studying $\build(H_{E})$ as a subtree of $\build(H_{EF'})$. 
Section \ref{lunram} shows that there is exactly one $\gal(EF',E)$-fixed point in $\apt(EF')$ and that it is $v_0$. Since $\delta\in T_E$, it sends $\apt(EF')$ to itself and $E$-points to themselves, $\delta$ must also fix $v_0$, which has type zero.

Finally, consider the case where $\gamma$ does split in $E$ and $s$ is even. If the eigenvalues of $\delta$ have valuations $m$ and $n$, then $s=m+n=2k$, while $\delta$ shifts $\apt(E)$ by $m-n=2k-2n$. We know that $\sigma$ acts on $\apt(E)$ by the action $x\rightarrow -x$. Thus $\delsig(x)=-x+m-n$ for all points $x\in\apt(E)$. Solving for the fixed points of this map yields $x=\frac{m-n}{2}=k-n$, an integer, and therefore a vertex of $\apt(E)$. Since the vertex corresponding to zero, $v_0$, is type zero, the vertex at $x$ is type zero when $x$ is even and type $1$ when $x$ is odd. But $x$ is even if and only if $m-n$ is divisible by 4.
\end{proof}



\section{The Iwahori-Hecke Algebra}\label{heckechapt}

Recall that $\mathcal{H}_L$ is the Iwahori-Hecke algebra, the set of compactly-supported $I$-bi-invariant functions from $GL_2(L)$ to $\C$. The integrands in our main theorem lie in $\zhi$ and $\zhie$, the centers with respect to convolution of $\mathcal{H}$ and $\mathcal{H}_E$.  We need a description of these algebras and of the base change homomorphism between them.
\subsection{The $\mu$-Admissible Set}\label{admiss}
In order to describe the basis of \zhie\ that we will use, we must address the notion of $\mu$-admissibility of elements of the extended affine Weyl group. Let $\mu\in X_*(S_L)\isom\Z^2$. The finite Weyl group $W$ acts on $X_*(S_L)$ as permutations on $\Z^2$.  An element $w\in\wae$ is \emph{$ \mu $-admissible} if there exists a $w_0\in W$ such that  $w\leq t_{w_0\mu}$ in the Bruhat order on $\wae$. 

Unraveling this definition, we see that $w=(m,b,s)$ is $\mu=(i,j)$-admissible if $s=-i-j$ and $\ell(m,b)\leq |i-j|$. We call $-i-j$ and $|i-j|$ the \emph{size} and \emph{length} of $\mu$, respectively; this is consistent with our use of the same words for $t_\mu$. Using our identification of $\wae$ with the extended edges of $\sapt'(L)$, this means that the  $e'\in\sapt'(L)$ corresponding to $w$ is $\mu$-admissible if and only if $\size(e)=\size(\mu)$ and the length of the minimal gallery between $e_0$ and $e$ is less than or equal to $\ell(\mu)$, where $e$ is the image of $e'$ in $\sapt(L)$. Below are pictures of the $\mu$-admissible sets for $\mu=(i,i)$ and $\mu=(0,-1)$; in each case the admissible edges are bold.
\begin{center}
\begin{tikzpicture}
[axis/.style={<->,>=stealth'},every circle node/.style={draw,circle,
			inner sep=0mm},scale=.6,grow cyclic,
	level 1/.style={level distance=8mm,sibling angle=180},
	level 2/.style={level distance=6mm,sibling angle=68},
	level 3/.style={level distance=6mm,sibling angle=57}]
   	\draw[white,myBG] (-9.25,0) -- (9.25,0);
	\draw[black,line width=1mm] (-1.25,0)--(1.25,0);
   	\draw[black] (-9.25,0) -- (9.25,0);
   	\draw (-9.25,0) node[left] {$\ldots$}; 
   	\draw (9.25,0) node[right] {$\ldots$};
   	\node at (12,0) {$\sapt(L')\tau^{-2i}$};  			
    	\foreach \colorA/\colorB/\shift in {black/white/0,white/black/2.5}
   	  \foreach \pos in {-6.25-\shift,-1.25-\shift,3.75-\shift,8.75-\shift}{
 	    \node[fill=white] at (\pos,0) [circle, minimum size=1.8mm] {};
 	    \node[fill=\colorA] at (\pos,0) [circle, minimum size=1.7mm] {}; 
 	}
 	\node[fill=white] at (-8.75,0) [circle, minimum size=1.8mm] {};
 	    \node[fill=white] at (-8.75,0) [circle, minimum size=1.7mm] {};
 	    
 	  \foreach \pos/\postext in {-7.5/{(1,1,-2i)},-5/{(1,0,-2i)},-2.5/{(0,1,-2i)},-0/{(0,0,-2i)},2.5/{(-1,1,-2i)},5/{(-1,0,-2i)},7.5/{(-2,1,-2i)}} 
 	  \node at (\pos,0) [below] {\tiny$\postext$};

  \begin{scope}[yshift=-3cm]
   	\draw[white,myBG] (-9.25,0) -- (9.25,0);
	\draw[black,line width=1mm] (-3.75,0)--(3.75,0);
   	\draw[black] (-9.25,0) -- (9.25,0);
   	\draw (-9.25,0) node[left] {$\ldots$}; 
   	\draw (9.25,0) node[right] {$\ldots$};
   	\node at (12,0) {$\sapt(L')\tau$};  			
    	\foreach \colorA/\colorB/\shift in {black/white/0,white/black/2.5}
   	  \foreach \pos in {-6.25-\shift,-1.25-\shift,3.75-\shift,8.75-\shift}{
 	    \node[fill=white] at (\pos,0) [circle, minimum size=1.8mm] {};
 	    \node[fill=\colorA] at (\pos,0) [circle, minimum size=1.7mm] {}; 
 	}
 	\node[fill=white] at (-8.75,0) [circle, minimum size=1.8mm] {};
 	    \node[fill=white] at (-8.75,0) [circle, minimum size=1.7mm] {};
 	    
 	  \foreach \pos/\postext in {-7.5/{(1,1,1)},-5/{(1,0,1)},-2.5/{(0,1,1)},-0/{(0,0,1)},2.5/{(-1,1,1)},5/{(-1,0,1)},7.5/{(-2,1,1)}} 
 	  \node at (\pos,0) [below] {\tiny$\postext$};

  \end{scope}
\end{tikzpicture}
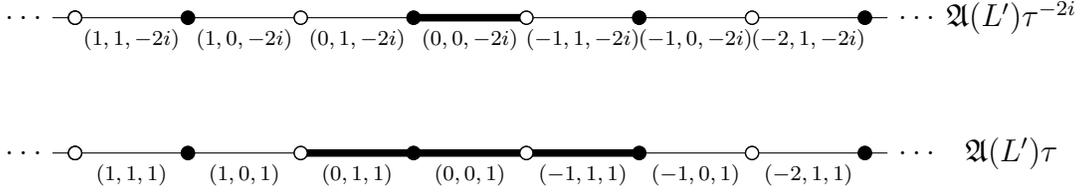
\captionof{figure}[The $\mu$-admissible sets, $\mu=(i,i)$ and $\mu=(0,-1)$.]{\emph{Top}: The $(i,i)$-admissible set. \emph{Bottom}: The $(0,-1)$-admissible set.}
\end{center}

\subsection{Bernstein Functions and Base Change}\label{bernsec}
The Bernstein isomorphism is an isomorphism $B_L:\C[X_*(S_L)]^{W}\isomto \zhil$. Denote by $g_\lambda$ the element $\lambda\in X_*(S_L)$ viewed as an element of $\C[X_*(S_L)]$. Then, following \cite{lusztig}, we define the \emph{Bernstein function} $z_\mu\in \zhil$ by
$$z_\mu=B_L\left(\sum_{\lambda\in W\mu}g_\lambda\right).$$ 
\begin{theorem}\label{berncoefs}
Let $\mu\in X_*(S_L)$ be dominant. Then $z_\mu=\sum_{w\in\wae} z_\mu(w)1_{I_LwI_L}$, where
$$z_\mu(x)=\begin{cases} 0, & x\text{ not }\mu\text{-admissible}\\
					q^\frac{-\ell(\mu)}{2}, & \ell(\mu)-\ell(x)=0\\
					q^\frac{-\ell(\mu)}{2}((-q)^r-1)\left(\frac{q-1}{q+1}\right), & \ell(\mu)-\ell(x)=r\geq1,\end{cases}$$
where $q^n$ is the cardinality of the residue field of $L$. The $z_\mu$ form a $\C$-basis of $\zhil$.
\end{theorem}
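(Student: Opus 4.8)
The plan splits into two independent parts. The basis assertion is the easy one: the orbit sums $m_\mu:=\sum_{\lambda\in W\mu}g_\lambda$, as $\mu$ runs over the dominant elements of $X_*(S_L)\isom\Z^2$, form a $\C$-basis of $\C[X_*(S_L)]^W$, because $W$ acts on the basis $\{g_\lambda\}$ of $\C[X_*(S_L)]$ by permutation and the invariants of a permutation representation are spanned by the orbit sums, one per $W$-orbit. Since $B_L$ is a $\C$-algebra isomorphism and $z_\mu=B_L(m_\mu)$, the $z_\mu$ form a $\C$-basis of $Z(\mathcal{H}_L)$.

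For the coefficient formula I would first reduce to a one-parameter family. Write a dominant $\mu=(a,b)$, $a\geq b$, as $(a-b,0)+(b,b)$; since $g_{(b,b)}$ is $W$-fixed, $m_\mu=g_{(b,b)}\,m_{(a-b,0)}$, hence $z_\mu=z_{(b,b)}\ast z_{(a-b,0)}$ by multiplicativity of $B_L$. The cocharacter $(b,b)$ is central, $t_{(b,b)}=\tau^{-2b}$ has length zero, and $z_{(b,b)}=1_{I_L\tau^{-2b}I_L}$, so convolving with it merely translates the support through $\tau^{-2b}$, shifting $\size$ by $-2b$ and preserving all lengths and the involution $w\mapsto\overline{w}$ of \ref{lengthnote}. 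It therefore suffices to establish the claimed formula for $\mu=(r,0)$, $r\geq0$, with $\ell(\mu)=r$ and $\size(\mu)=-r$.

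Second, I would induct on $r$ using the Newton recursion for power sums. Put $p_r:=g_{(r,0)}+g_{(0,r)}$, so that $p_r=m_{(r,0)}$ for $r\geq1$, $p_1=m_{(1,0)}$, $p_0=2g_{(0,0)}$, and $p_{r+1}=m_{(1,0)}\,p_r-g_{(1,1)}\,p_{r-1}$ for $r\geq1$. Applying $B_L$ and setting $Z_0:=2\cdot 1_{I_L}$, $Z_1:=z_{(1,0)}$, this gives $Z_{r+1}=z_{(1,0)}\ast Z_r-z_{(1,1)}\ast Z_{r-1}$ with $Z_r=z_{(r,0)}$ for $r\geq1$, where $z_{(1,1)}=1_{I_L\tau^{-2}I_L}$ convolves as translation by the central element $\tau^{-2}$. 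The anchor $z_{(1,0)}$ is computed directly in the rank-one Bernstein presentation: $t_{(1,0)}=s_0\tau^{-1}$ and $t_{(0,1)}=s_1\tau^{-1}$ are the two length-$1$ elements of size $-1$, $\tau^{-1}$ is the length-$0$ one, and writing $T_w:=1_{I_LwI_L}$ one finds $B_L(g_{(1,0)})=q^{-1/2}T_{s_0\tau^{-1}}$ and $B_L(g_{(0,1)})=q^{-1/2}\bigl(T_{s_1\tau^{-1}}-(q-1)T_{\tau^{-1}}\bigr)$, so $z_{(1,0)}=q^{-1/2}\bigl(T_{s_0\tau^{-1}}+T_{s_1\tau^{-1}}-(q-1)T_{\tau^{-1}}\bigr)$, which is exactly the asserted formula at $r=1$. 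For the inductive step one expands $z_{(1,0)}\ast Z_r-z_{(1,1)}\ast Z_{r-1}$ with the rank-one Iwahori--Hecke rules ($T_\tau T_w=T_{\tau w}$, and $T_{s_i}T_w=T_{s_iw}$ or $qT_{s_iw}+(q-1)T_w$ according as $\ell(s_iw)$ exceeds or is less than $\ell(w)$) and reads off the coefficient at each $w$ of size $-r$. By Lemma \ref{lengthlemma} the size-$(-r)$ elements fall into pairs $\{w,\overline{w}\}$ of equal length (with one singleton at length $0$), the target formula is constant on each pair, and the recursion respects this, so one only needs to follow a single coefficient for each length $n\in\{0,1,\dots,r\}$.

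The step I expect to carry the weight of the proof is precisely this inductive verification: showing that the length-raising contribution, the length-lowering contribution (with its coefficient $q$ and its $(q-1)$-tail), and the subtracted term $z_{(1,1)}\ast Z_{r-1}$ combine to convert $q^{-(r-1)/2}((-q)^{k}-1)\tfrac{q-1}{q+1}$ into $q^{-r/2}((-q)^{k'}-1)\tfrac{q-1}{q+1}$ — that is, that the alternating sign $(-q)^k$ and the rational factor $\tfrac{q-1}{q+1}$ propagate correctly under the recursion — and that the boundary cases $\ell(\mu)=\ell(x)$ (top of the admissible range) and $\ell(x)=0$ (bottom), where the generic pattern degenerates, come out right; the subtracted term is exactly what cancels the otherwise spurious pieces, and this cancellation has to be checked length by length. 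If the bookkeeping proves unwieldy, a workable alternative is to quote Lusztig's rank-one computation of the Bernstein functions and merely re-derive its translation into the $(m,b,s)$-coordinates used here.
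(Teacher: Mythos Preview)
Your approach is correct in outline but differs substantially from the paper's. The paper does not compute the coefficients from scratch: it cites \cite{lusztig} for the basis assertion and \cite[Corollary~10.4]{hainesbern} for the coefficients in the form
\[
z_\mu(x)=q^{-\ell(\mu)/2}\bigl(1-2q+2q^2-\cdots+2(-q)^{r-1}+(-q)^r\bigr)
\]
for $r=\ell(\mu)-\ell(x)\geq2$ (with the obvious variants for $r=0,1$), and then performs the elementary rewriting
\[
1-2q+2q^2-\cdots+2(-q)^{r-1}+(-q)^r=\bigl((-q)^r-1\bigr)\frac{q-1}{q+1}
\]
by factoring out $q-1$ and summing the resulting geometric series. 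That is the entire argument.

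Your route---reduce to $\mu=(r,0)$ via the central shift, compute $z_{(1,0)}$ directly in the Bernstein presentation (your identification $t_{(1,0)}=s_0\tau^{-1}$, $t_{(0,1)}=s_1\tau^{-1}$ and the resulting expression for $z_{(1,0)}$ are correct), and propagate via the Newton-type recursion $Z_{r+1}=z_{(1,0)}\ast Z_r-z_{(1,1)}\ast Z_{r-1}$---would also reach the answer and has the virtue of being self-contained. The cost is precisely the inductive bookkeeping you flag as the load-bearing step: tracking the length-raising, length-lowering, and $(q-1)$-tail contributions of $T_{s_i}T_w$ at every length $0\le n\le r+1$, plus the boundary cases at the top and bottom of the admissible range. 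This is essentially what \cite{hainesbern} already carries out in the same rank-one setting, so your proposed fallback of quoting an existing rank-one computation is, in effect, what the paper does.
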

\begin{proof}
That the Bernstein functions form a $\C$-basis of \zhil\ is proven in \cite{lusztig}. The coefficients of the functions were worked out in \cite{hainesbern}, corollary $10.4$ to be 
$$z_\mu(x)=\left\{\begin{array}{l l} 0, & x\text{ not }\mu\text{-admissible}\\
					q^\frac{-\ell(\mu)}{2}, & \ell(x)=\ell(\mu)\\
					q^\frac{-\ell(\mu)}{2}(1-q), & \ell(x)=\ell(\mu)-1\\
					q^\frac{-\ell(\mu)}{2}(1-2q+2q^2-\cdots+2(-q)^{r-1}+(-q)^r), & r=\ell(\mu)-\ell(x)\geq2,
		\end{array}\right.$$
Since $$1-q=((-q)^1-1)\frac{q-1}{q+1}$$ is obvious and $$(1-2q+2q^2-\cdots+2(-q)^{r-1}+(-q)^r)=((-q)^r-1)\frac{q-1}{q+1}$$ follows from factoring out $q-1$ and then simplifying the geometric series, the theorem follows.
\end{proof}
Since the $z_\mu(x)$ depend only upon $s=\ell(\mu)-\ell(x)$, it will be convenient to write this coefficient as $z_\mu^s$.

Let $L$ be unramified over $F$. The \emph{base change homomorphism for Iwahori-Hecke algebras} is defined in \cite[3.2]{haineslemma} as the function $b$ which makes the following diagram commute:
\begin{center}
\begin{tikzpicture}[description/.style={fill=white,inner sep=2pt},
bij/.style={below,sloped,inner sep=2.5pt}]
\matrix (m) [matrix of math nodes, row sep=3em,
column sep=2.5em, text height=1.5ex, text depth=0.25ex]
{ \C[X_*(S_L)]^W & \zhil \\
 \C[X_*(S)]^W &  \zhi\\ };
\path[->,font=\scriptsize]
(m-1-1) edge node[bij] {$\sim$} node[above] {$ B_L $} (m-1-2)
edge node[left] {$ N $} (m-2-1)
(m-2-1) edge node[bij] {$\sim$}  node[above] {$ B $} (m-2-2)
(m-1-2) edge node[auto] {$ b $} (m-2-2);
\end{tikzpicture}\captionme{The diagram defining the base change homomorphism $b$}\label{basechangedef}
\end{center}
Here $N$ is the norm map defined by $N(g_\mu)=\sum_{i=0}^{[L:F]-1} \sigma_L^ig_\mu$ for any $\mu\in X_*(S_L)$. 

\begin{theorem}\label{basechangeforbernstein}
Let $[L:F]=r$. Then $b(z_{\mu})=z_{r\mu}$.
\end{theorem}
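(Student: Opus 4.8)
The plan is to unwind the two isomorphisms appearing in the diagram defining $b$ (see \ref{basechangedef}) and to reduce the statement to a transparent identity of $W$-orbit sums inside the group algebra $\C[X_*(S_L)]^W$. Commutativity of that diagram says $b\circ B_L = B\circ N$, and since $B_L$ is an isomorphism we have $b = B\circ N\circ B_L^{-1}$. Because, by definition, $z_\mu = B_L\!\left(\sum_{\lambda\in W\mu} g_\lambda\right)$, this gives
$$b(z_\mu) = B\!\left(N\!\left(\sum_{\lambda\in W\mu} g_\lambda\right)\right),$$
so the whole problem reduces to computing the effect of the norm homomorphism $N$ on the element $\sum_{\lambda\in W\mu} g_\lambda$.

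The key point I would establish next is that $N$ acts on the group-algebra generators by $g_\nu\mapsto g_{r\nu}$, where $r=[L:F]$. This is where the hypotheses are used: $\mathbf S$ is split over $F$, so every cocharacter of $S_L$ is already defined over $F$; hence $\gal(L,F)$ acts trivially on $X_*(S_L)$, and under the canonical identification $X_*(S_L)\isom X_*(S)\isom\Z^2$ the norm map $N$ of \ref{basechangedef} degenerates to ``multiply the cocharacter by the degree'', i.e. $N(g_\nu) = g_{r\nu}$. (Here $L/F$ unramified is a standing hypothesis needed for $b$ to be defined in the first place \cite{haineslemma}.) Therefore
$$N\!\left(\sum_{\lambda\in W\mu} g_\lambda\right) = \sum_{\lambda\in W\mu} g_{r\lambda}.$$

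To finish, I would observe that $W$ acts on $X_*(S)$ by $\Z$-linear automorphisms (permutations of $\Z^2$), so multiplication by $r$ commutes with the $W$-action; hence $r\cdot(W\mu) = W(r\mu)$ as subsets of $X_*(S)$, and
$$\sum_{\lambda\in W\mu} g_{r\lambda} = \sum_{\nu\in W(r\mu)} g_\nu.$$
Since $\mu$ dominant forces $r\mu$ dominant ($r\geq 1$), the right-hand side is precisely the element whose image under the Bernstein isomorphism $B$ for $GL_2(F)$ is $z_{r\mu}$, by definition of the Bernstein function. Applying $B$ yields $b(z_\mu) = z_{r\mu}$.

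The only genuinely delicate point is the middle paragraph: pinning down that the norm homomorphism $N$ of \ref{basechangedef} really is $g_\nu\mapsto g_{r\nu}$. This requires being careful with the conventions under which Haines sets up that diagram — in particular the identification of $X_*(S_L)^{\gal(L,F)}$ with $X_*(S)$ and the chosen normalization of the norm on $\C[X_*(S_L)]^W$. Once one checks that $\gal(L,F)$ acts trivially on $X_*(S_L)$ because $\mathbf S$ is $F$-split, the formula for $N$ follows, and the remaining steps are purely formal manipulations of group-algebra orbit sums.
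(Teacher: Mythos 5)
Your proof is correct and follows essentially the same route as the paper's: unwind the defining diagram, use that $\mathbf{S}$ is $F$-split so $N$ sends $g_\nu$ to $g_{r\nu}$, and then re-index the $W$-orbit sum. You are somewhat more explicit than the paper about why $N$ degenerates to multiplication by $r$ and why $r\cdot(W\mu)=W(r\mu)$, but the argument is the same.
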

This is not a new result, but we provide a proof here for completeness.
\begin{proof}
Since $\mathbf{G}$ is split, the norm map is simply $N(g_\mu)=rg_\mu=g_{r\mu}$, where $c\mu=(cm,cn)$ if $\mu=(m,n)$. Then 
\begin{align*}b(z_\mu) &= B\left(N\left(\sum_{\lambda\in W\mu} g_\lambda\right)\right)\\
&= B\left(\sum_{\lambda\in W\mu} g_{r\lambda}\right)\\
&= B\left(\sum_{\lambda\in W(r\mu)} g_{\lambda}\right)\\
&= z_{r\mu}
\end{align*}
\end{proof}

In light of the results in subsection \ref{admiss}, we may give another description of $z_\mu$ by breaking up the sum into the parts involving $w\in\wae$ of length zero, odd length, and non-zero even length. This simple rearrangement will be very useful later.
\begin{corollary}\label{berncor}
Let $\siz(\mu)=s$. We have 
\begin{align*}z_\mu &= z_\mu^{\ell(\mu)}1_{(0,0,s)}\\
	&\qquad +\sum_{1\leq i\leq \frac{\ell(\mu)+1}{2}} z_\mu^{\ell(\mu)-2i+1}\left[1_{(-i,1,s)}+1_{(i-1,1,s)}\right]\text{ (odd length edges)}\\
	&\qquad +\sum_{1\leq j\leq\frac{\ell(\mu)}{2}} z_\mu^{\ell(\mu)-2j}\left[1_{(-j,0,s)}+1_{(j,0,s)}\right]\text{ (even length edges)}.
\end{align*}
\end{corollary}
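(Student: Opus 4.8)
The plan is to start from the expansion $z_\mu = \sum_{w\in\wae} z_\mu(w)\,1_{I_LwI_L}$ provided by Theorem \ref{berncoefs} and simply regroup the sum according to the length of $w$ and the parity of that length. First I would record two facts about the support. By Theorem \ref{berncoefs}, $z_\mu(w)=0$ unless $w$ is $\mu$-admissible; and by the discussion in subsection \ref{admiss}, together with Lemma \ref{lengthlemma}, the element $w=(m,b,s')$ is $\mu$-admissible if and only if $s'=\siz(\mu)=s$ and $\ell(w)=\ell(m,b)=|2m+b|\le \ell(\mu)$. Thus the support of $z_\mu$ is exactly the set of $(m,b,s)$ with $0\le \ell(m,b)\le \ell(\mu)$, and on this set the coefficient is $z_\mu(w)=z_\mu^{\ell(\mu)-\ell(w)}$, depending only on $\ell(\mu)-\ell(w)$.

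Next I would invoke the explicit enumeration from subsection \ref{lengthnote} of the elements of $\wae$ of given size and length: for size $s$ there is a unique element of length $0$, namely $(0,0,s)$; for odd length $2r+1$ there are exactly the two elements $(r,1,s)$ and $(-r-1,1,s)$; and for nonzero even length $2r$ there are exactly the two elements $(r,0,s)$ and $(-r,0,s)$. Running through the admissible lengths: length $0$ contributes the single term with coefficient $z_\mu(0,0,s)=z_\mu^{\ell(\mu)}$; the admissible odd lengths are $2i-1$ for $1\le i\le \frac{\ell(\mu)+1}{2}$, and setting $r=i-1$ gives the pair $(i-1,1,s)$, $(-i,1,s)$ with common coefficient $z_\mu^{\ell(\mu)-(2i-1)}=z_\mu^{\ell(\mu)-2i+1}$; the admissible nonzero even lengths are $2j$ for $1\le j\le \frac{\ell(\mu)}{2}$, giving the pair $(j,0,s)$, $(-j,0,s)$ with common coefficient $z_\mu^{\ell(\mu)-2j}$. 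Collecting these three families of terms reproduces exactly the displayed formula, since the fact that $z_\mu(x)$ depends only on $\ell(\mu)-\ell(x)$ is precisely what lets each length-$n$ pair be written with a single shared coefficient.

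There is no genuine obstacle here; the content is bookkeeping. The only points needing a moment of care are (i) checking that the ranges $1\le i\le \frac{\ell(\mu)+1}{2}$ and $1\le j\le \frac{\ell(\mu)}{2}$ (integers) pick out exactly the admissible odd and nonzero even lengths, so that the value $\ell(\mu)$ is counted once and only through the correct family when it is attained, and (ii) matching the two length-$n$ elements to the indices in the statement, i.e. that the element $(r,1,s)$ with $r=i-1$ is $(i-1,1,s)$ while $(-r-1,1,s)$ is $(-i,1,s)$, and similarly for the even case. Once these indexing checks are made, the corollary is immediate from Theorem \ref{berncoefs}.
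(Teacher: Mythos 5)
Your proof is correct and takes essentially the same approach as the paper's (which is just a three-sentence appeal to the admissibility discussion and the enumeration of elements of a given size and length); you have simply spelled out the bookkeeping in more detail.
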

\begin{proof}
The sizes are as determined in subsection \ref{admiss}. The first term in the summation belongs to the unique length zero edge, the first summation covers the $\mu$-admissible edges of odd length, while the second covers those of non-zero even length.
\end{proof}




\section{Computation of the Orbital Integrals}\label{orbitalchapt}
From now until section \ref{fltchapt} we choose a $\delta\in G_E$ such that $N(\delta)=\gamma$ is semi-simple regular elliptic and we normalize $\meas_{dg_E}(I_E)=1$.
\subsection{Langlands' Results on the Spherical Hecke Algebra}\label{oldlanglands}
The reader may find it interesting to compare the orbital and twisted orbital integrals of characteristic functions in the Iwahori-Hecke algebra obtained here with those obtained by Langlands for characteristic functions in the spherical Hecke algebra. Since the results are spread out across the whole of section 5 of \cite{langlands}, I collect them here. 

Recall that $K_L$ is $G_{\ints_L}$ and $T_{\ints}=T\cap K=T\cap K_E$.
\begin{theorem}
Let $[E:F]=f$ be prime, let $\delta\in G_E$ have norm conjugate to a semi-simple regular elliptic $\gamma\in G$ with $\Delta(\gamma)=q^{-r}$, 
let $M_L=\frac{\meas_{dg_T}(T_{\ints})}{\meas_{dg}(K_L)}$, 
 let $\mu=(i,j)$ be a dominant cocharacter, and let 
$$a_L=\begin{pmatrix} \pi_L^i & 0 \\0 & \pi_L^j\end{pmatrix},$$
$L=F$ or $E$.
 Then the orbital and twisted orbital integrals of characteristic functions $1_{Ka_FK}$ and $1_{(K_Ea_EK_E)}$, respectively, are all zero except the following cases:
\begin{enumerate}
\item When $F'$ is unramified over $F$ and $n$ is the common valuation over $F'$ of the eigenvalues of $\gamma$, then
\begin{enumerate}
\item the orbital integral is
$$
M_F\orb(1_{Ka_FK})=\begin{cases}
			\frac{q^{r}(q+1)-2}{q-1}, & (i,j)=(n,n)\\
			q^{r+m}(q+1), & (i,j)=(n+m,n-m), m> 0
			\end{cases}
$$
\item if $T$ splits over $E$ (i.e. if $f=2$) and $\val_E\det(\delta)=2n+1$ is odd, then
$$
M_E\orb(1_{Ka_EK})= 2q^{fr}, (i,j)=(n+1+m,n-m),m\geq 0
$$
\item if $T$ splits over $E$ and $\val_E\det(\delta)=2n$ is even, or if $T$ does not split over $E$, then
$$
M_E\orb(1_{Ka_EK})=\begin{cases}
			\frac{q^{fm}}{q-1}\left[q^{a}(q+1)(1-q^{-f})-2(1-q^{1-f})\right], & ~\\
				\qquad (i,j)=(n+m,n-m),m>0\text{ if }T\text{ splits}& ~\\
				\qquad\quad \text{over  }E\\
				\qquad\qquad\qquad\qquad \text{or}\\
				\qquad (i,j)=(\frac{v}{f}+m,\frac{v}{f}-m), m>0\text{ if }T\text{ does not}\\
				\qquad\quad \text{split over  }E\\
			\frac{q^{a}(q+1)-2}{q-1},  (i,j)=(n,n)\text{ if }T\text{ splits over }E & ~\\
			\qquad \text{or }(i,j)=(\frac{v}{f},\frac{v}{f})\text{ if }T\text{ does not split over }E
			\end{cases}
$$
\end{enumerate}
\item When $F'$ ramified over $F$, then 
\begin{enumerate}
\item if $\vd(\gamma)=2n+1$ is odd, then 
$$
M_F\orb(1_{Ka_FK})= q^m, (i,j)=(n+1+m,n-m), m\geq0
$$
\item if $\vd(\gamma)=2n$ is even, then, letting $r=a+\frac{d_T+1}{2}$, we get
$$
M\orb(1_{Ka_FK})=\begin{cases}
			\frac{q^a-1}{q-1}, & (i,j)=(n,n)\\
			q^{a+m}, & (i,j)=(n+m,n-m), m>0\end{cases}
$$
%
\item if $\vd(\delta)=2n+1$ is odd, then
$$M_E\twisted(1_{Ka_EK})=q^{fm}, (i,j)=(n+1+m,n-m), m\geq 0$$
\item if $\vd(\delta)=2n$ is even, then, letting $r=a+\frac{d_T+1}{2}$, we get
$$M_E\twisted(1_{Ka_EK})=\begin{cases}
			q^{fm}\left[(1-q^{1-f})\frac{q^a-1}{q-1}+q^a\right], & \quad\\
				\qquad (i,j)=(n+m,n-m),m>0\\
			\frac{q^{a+1}-1}{q-1},  (i,j)=(n,n)
			\end{cases}
$$
\end{enumerate}
\end{enumerate}\qed
\end{theorem}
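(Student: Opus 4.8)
The statement collects, in the notation of this paper, the orbital and twisted orbital integral computations that Langlands carries out piecemeal in Section~5 of \cite{langlands}, so the quickest rigorous route is to fix the dictionary between the two papers (our $\Delta(\gamma)=q^{-r}$, $a_L$, $M_L$, $d_T$, and the split/non-split and $\val\det$-parity conditions match Langlands' quantities) and then cite \cite{langlands} case by case. It is nonetheless worth noting that every formula also falls out of the building-theoretic machinery already assembled in Section~\ref{buildappsec}, which is modelled on exactly these arguments of Langlands; the plan below is to re-run that machinery with the maximal compact $K$ (resp.\ $K_E$) in place of the Iwahori subgroup.

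First I would give $1_{Ka_FK}$ its geometric meaning. Writing $a_F=\diag(\pi^i,\pi^j)$ with $i\ge j$, an element $g\in G$ lies in $Ka_FK$ precisely when the lattice $g\Lambda_0$ has elementary divisors $(\pi^i,\pi^j)$ relative to $\Lambda_0$; equivalently, the vertex $g\Lambda_0$ of $\build(G)$ satisfies $d_{\build(H)}([\Lambda_0],[g\Lambda_0])=i-j$ and $\val_F\det g=i+j$. Hence $1_{Ka_FK}(g^{-1}\gamma g)=1$ iff the vertex $v=g\Lambda_0$ has $d_{\build(H)}([v],\gamma[v])=i-j$ and $\val_F\det\gamma=i+j$, a condition depending only on $v\in G/K$ and invariant under $T=G_\gamma(F)$ acting on the left. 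Dividing by $T$ and using the normalization, $M_F\orb(1_{Ka_FK})$ becomes the number of $T$-orbits of such vertices, and likewise --- using Lemma~\ref{toruslemma} to identify $G_\delta(F)=T$ --- $M_E\twisted(1_{K_Ea_EK_E})$ counts $T$-orbits of vertices $v$ of $\build(G_E)$ with $d_{\build(H_E)}([v],\delsig[v])=i-j$ and $\val_E\det\delta=i+j$. In particular, when $\val_F\det\gamma\neq i+j$ (resp.\ $\val_E\det\delta\neq i+j$) there are no such vertices and the integral vanishes, which accounts for all the ``all zero except'' clauses.

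It then remains to perform the vertex count, which is precisely what Lemmas \ref{langlandsfixed}, \ref{ramcount}, \ref{twramcount}, and \ref{langlemmatwo} control: the set of $\gamma$-fixed (resp.\ $\delsig$-fixed) edges is a ball of radius governed by $\Delta(\gamma)$ about $\apt(F')$ (resp.\ about the appropriate apartment), and a vertex $v$ whose image in $\build(H)$ lies at tree-distance $k$ from this fixed ball has $d([v],\gamma[v])$ equal to $2k$ or $2k\pm1$ according to whether the nearest point of the fixed set is a vertex or the barycentre of an edge. One enumerates the vertices by this distance, using that each vertex has $q+1$ (resp.\ $q^f+1$) incident edges, of which $q+1$ are $\delsig$-fixed when the vertex lies in the interior of the fixed set (Lemma~\ref{langlemmatwo}), and sums the resulting geometric series; the odd/even $\val\det$ dichotomy, the split/non-split over $E$ dichotomy, the ramified/unramified dichotomy, and the ``$4\mid(m-n)$'' condition organize the cases exactly as in the diagram preceding Section~\ref{gammaaction}, and produce the listed closed forms (for instance $1+(q+1)\sum_{k=1}^{r}q^{k-1}=\tfrac{q^{r}(q+1)-2}{q-1}$ for the $\mu=(n,n)$, unramified case).

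I expect the main obstacle to be bookkeeping rather than ideas. Two points are delicate: (i) verifying that, after quotienting by $T$, the stabilizers of the relevant vertices are the maximal compact of $T$ (and determining whether $\val\det|_T$ surjects onto $\Z$ or lands in $2\Z$) --- without this the orbit count is not literally the normalized integral, and it is exactly here that the normalizing factor $M_L$ and the ``$-2$ versus $-1$'' discrepancies between the unramified and ramified constants originate; this is the step Langlands settles by analysing the $T$-action on the fixed set; and (ii) keeping the four-way case split straight while summing the geometric series. Since both are carried out in detail in \cite[Section 5]{langlands}, the honest proof simply invokes that reference after the notational translation above.
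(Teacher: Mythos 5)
Your proposal is correct and matches the paper's own (non-)proof: the theorem is stated precisely as a compilation of results from Section~5 of \cite{langlands} and carries a bare \verb|\qed| with no argument, so ``cite Langlands after translating notation'' is exactly what the paper does. Your vertex-counting sketch (double cosets $Ka_FK$ as relative lattice position, $T$-orbit counting, geometric series over the fixed ball) is an accurate description of the geometry underlying Langlands' proof and parallels the edge-counting machinery of Section~\ref{buildappsec}, but it is supplementary detail rather than a departure in method.
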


\subsection{Changing from Integration to Counting: Common Preliminaries}\label{sumsec}
Two lemmas important for changing the orbital and twisted orbital integrals into counting problems in the building for $GL_2$ do not depend upon whether or not $F'$ is ramified over $F$. These results are stated for the field $E$, which includes the trivial extension $E=F$ where $\sigma$ is the identity map.
\begin{lemma}\label{reductionone}The twisted orbital integral of a characteristic function in the Iwahori-Hecke algebra is given by
\begin{align*}
\twisted(1_{I_EwI_E}) &= \sum_{[e']\in T\backslash X_{w}(\delta\sigma)}\left(\frac{1}{\meas_{dg_{T}}(T\cap g_{e'}I_Eg_{e'}^{-1})}\right)
\end{align*}
for any $w\in\wae$, where $[e']$ denotes the class $Te'$ and $g_{e'}\in G_E$ is defined by $e'=g_{e'}e_0'$. 
\end{lemma}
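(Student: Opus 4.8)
The plan is to unwind the definitions, use Lemma~\ref{toruslemma} to replace the twisted centralizer $G_\delta(F)$ by $T$, translate the condition $g^{-1}\delta\sigma(g)\in I_EwI_E$ into the statement that the extended edge $ge_0'$ lies in $X_w(\delta\sigma)$, and then break the integral over $T\backslash G_E$ into a sum over the $T$-orbits on $X_w(\delta\sigma)$. Concretely I would start from
\[
\twisted(1_{I_EwI_E})=\int_{T\backslash G_E}1_{I_EwI_E}\bigl(g^{-1}\delta\sigma(g)\bigr)\,d\dot g,
\]
which is legitimate because $G_\delta(F)=T$. For $g\in G_E$ set $e'=ge_0'$. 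Since $e_0'$ is defined over $F$ it is $\sigma$-fixed, so $\delta\sigma(e')=\delta\sigma(g)e_0'$, and hence $\inv\bigl(e',\delta\sigma(e')\bigr)$ is exactly the element of $\wae$ cut out by the double coset $I_Eg^{-1}\delta\sigma(g)I_E$. Consequently $1_{I_EwI_E}\bigl(g^{-1}\delta\sigma(g)\bigr)=1$ if and only if $e'=ge_0'\in X_w(\delta\sigma)$; in other words, the integrand is the pull-back along $g\mapsto ge_0'$ of the indicator function of $X_w(\delta\sigma)\subseteq X_E'$.

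Next I would observe that $X_w(\delta\sigma)$ is stable under the left $T$-action on $X_E'$: if $t\in T=G_\delta(F)$ then $t^{-1}\delta\sigma(t)=\delta$, so $(tg)^{-1}\delta\sigma(tg)=g^{-1}\bigl(t^{-1}\delta\sigma(t)\bigr)\sigma(g)=g^{-1}\delta\sigma(g)$, whence $\inv\bigl(tge_0',\delta\sigma(tge_0')\bigr)=\inv\bigl(ge_0',\delta\sigma(ge_0')\bigr)$. Choosing representatives $e_i'=g_ie_0'$ for the orbits in $T\backslash X_w(\delta\sigma)$, the preimage of $X_w(\delta\sigma)$ under $g\mapsto ge_0'$ is the disjoint union $\coprod_i Tg_iI_E$, since the fibre of $e_0'$ is $I_E$ and $ge_0'\in Te_i'$ exactly when $g\in Tg_iI_E$. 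Therefore
\[
\twisted(1_{I_EwI_E})=\sum_i\meas_{d\dot g}\bigl(\overline{Tg_iI_E}\bigr),
\]
where $\overline{Tg_iI_E}$ denotes the image of $Tg_iI_E$ in $T\backslash G_E$.

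Finally I would compute each term from the defining property of the quotient measure, $\int_{G_E}\phi\,dg_E=\int_{T\backslash G_E}\bigl(\int_T\phi(tg)\,dg_T\bigr)\,d\dot g$, applied to $\phi=1_{g_iI_E}\in C_c(G_E)$. The left-hand side equals $\meas_{dg_E}(g_iI_E)=\meas_{dg_E}(I_E)=1$. On the right, $\int_T 1_{g_iI_E}(tg)\,dg_T=0$ unless $g\in Tg_iI_E$, while for $g=t_0g_iu_0$ with $t_0\in T$, $u_0\in I_E$ it equals $\meas_{dg_T}\bigl((T\cap g_iI_Eg_i^{-1})t_0^{-1}\bigr)=\meas_{dg_T}(T\cap g_iI_Eg_i^{-1})$, using that $T$ is unimodular. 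Hence $1=\meas_{dg_T}(T\cap g_iI_Eg_i^{-1})\cdot\meas_{d\dot g}\bigl(\overline{Tg_iI_E}\bigr)$, so $\meas_{d\dot g}(\overline{Tg_iI_E})=1/\meas_{dg_T}(T\cap g_iI_Eg_i^{-1})$. Since replacing $g_i$ by $g_iu$ with $u\in I_E$ leaves $g_iI_Eg_i^{-1}$ unchanged, and replacing $g_i$ by $tg_i$ with $t\in T$ conjugates $T\cap g_iI_Eg_i^{-1}$ by $t$ (harmless, $T$ being abelian), the summand depends only on the class $[e']=Te'$, and writing $g_{e'}$ for $g_i$ gives the asserted formula.

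The argument carries no genuine difficulty beyond careful bookkeeping; the step demanding the most attention is the last paragraph, where one must pin down the normalization of the quotient measure and the unimodularity argument precisely, and verify that the summand really is well-defined on $T$-orbits. One should also record that the sum is finite: because $\gamma$ is elliptic regular semisimple, $T\backslash X_w(\delta\sigma)$ is a finite set and the twisted orbital integral converges.
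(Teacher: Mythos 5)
Your proof is correct, and it is essentially a complete write-out of the argument the paper only references: the paper's own proof is a one-line pointer to Kottwitz (3.4.1) together with Lemma~\ref{toruslemma}, and what you have done is precisely that Kottwitz-style reduction (identify the integrand with the pull-back of the indicator of $X_w(\delta\sigma)$, decompose the domain into $T$-orbits, compute the mass of each orbit from the quotient measure and the normalization $\meas_{dg_E}(I_E)=1$, and check well-definedness of the summand on $T$-orbits). No gap; the bookkeeping on the quotient measure and the $\sigma$-fixedness of $e_0'$ (which holds because $E/F$ is unramified, so $\pi$ is also a uniformizer of $E$ and the standard lattices $\Lambda_0,\Lambda_1$ are $\sigma$-stable) are both handled correctly.
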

\begin{proof}
%
The equality follows immediately upon applying the same reasoning used in the proof of \cite{kottwitz} $(3.4.1)$ to the twisted orbital integral, using lemma \ref{toruslemma} to replace $G_\delta$ with $T$ throughout.
\end{proof}

\begin{lemma}\label{oneofflemma}
Let $i\in\Z$. Then $$\sum_{[e']\in T_{\ints}\backslash X_w^i(\delta\sigma)} \frac{1}{\meas_{dg_T}(T_{\ints}\cap g_{e'}I_Eg_{e'}^{-1})}=\frac{\# X_w^i(\delta\sigma)}{\meas_{dg_T}(T_{\ints})}.$$
\end{lemma}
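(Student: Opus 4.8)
The plan is to interpret the left-hand side as a sum of reciprocal measures of stabilizers for a compact-group action and then apply the measure-theoretic orbit--stabilizer relation. First I would observe that $T_{\ints}$, being a subgroup of $T=T(F)\subseteq G_E$, acts on the set of extended edges $X'_E=G_E/I_E$ by left translation, and I claim that it preserves the subset $X_w^i(\delta\sigma)$. Granting this, $X_w^i(\delta\sigma)$ is partitioned into $T_{\ints}$-orbits, and $T_{\ints}\backslash X_w^i(\delta\sigma)$ is exactly the index set of the sum.

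The one step needing genuine justification is that $T_{\ints}$ stabilizes $X_w^i(\delta\sigma)$. That size is preserved is immediate: any $t\in T_{\ints}\subseteq G_{\ints}$ has $\det(t)\in\ints_E^\times$, so $\siz(te')=\val_E\det(t)+\siz(e')=\siz(e')$. For the relative distance I would use Lemma \ref{toruslemma}: since $G_\delta(F)=T$, every $t\in T$ satisfies $t^{-1}\delta\sigma(t)=\delta$, and since $\mathbf{T}$ is defined over $F$ we have $\sigma(t)=t$, whence $\delta t=t\delta$. Therefore $\delta\sigma(te')=\delta t\,\sigma(e')=t\,\delta\sigma(e')$, so $(te',\delta\sigma(te'))=t\cdot(e',\delta\sigma(e'))$, and the $G_E$-invariance of $\inv$ built into its definition gives $\inv(te',\delta\sigma(te'))=\inv(e',\delta\sigma(e'))=w$. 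Next, for a representative $e'=g_{e'}e_0'$, the stabilizer of $e_0'$ in $G_E$ is $I_E$, so $t\in T_{\ints}$ fixes $e'$ precisely when $g_{e'}^{-1}tg_{e'}\in I_E$, i.e. when $t\in T_{\ints}\cap g_{e'}I_Eg_{e'}^{-1}$; this is a compact open subgroup of $T$, so the orbit $T_{\ints}e'$ is finite with cardinality $[T_{\ints}:T_{\ints}\cap g_{e'}I_Eg_{e'}^{-1}]$. Decomposing $T_{\ints}$ into left cosets of this subgroup and using left-invariance of $dg_T$ yields
$$\meas_{dg_T}(T_{\ints})=\#\big(T_{\ints}e'\big)\cdot\meas_{dg_T}\big(T_{\ints}\cap g_{e'}I_Eg_{e'}^{-1}\big).$$
I would also note that the summand is well-defined on orbits: any two choices of $g_{e'}$ with $e'=g_{e'}e_0'$ differ by right multiplication by $I_E$, so $g_{e'}I_Eg_{e'}^{-1}$ is independent of the choice, and replacing $e'$ by $te'$ conjugates $T_{\ints}\cap g_{e'}I_Eg_{e'}^{-1}$ by $t\in T_{\ints}$, preserving its $dg_T$-measure.

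Finally I would sum over orbits,
$$\sum_{[e']\in T_{\ints}\backslash X_w^i(\delta\sigma)}\frac{1}{\meas_{dg_T}\big(T_{\ints}\cap g_{e'}I_Eg_{e'}^{-1}\big)}=\frac{1}{\meas_{dg_T}(T_{\ints})}\sum_{[e']\in T_{\ints}\backslash X_w^i(\delta\sigma)}\#\big(T_{\ints}e'\big)=\frac{\#X_w^i(\delta\sigma)}{\meas_{dg_T}(T_{\ints})},$$
the last equality because the $T_{\ints}$-orbits partition $X_w^i(\delta\sigma)$. I do not expect a serious obstacle here: the only substantive point is the verification that $T_{\ints}$ preserves $X_w^i(\delta\sigma)$, which rests on Lemma \ref{toruslemma} together with the defining equivariance of $\inv$, and everything else is the standard measure-theoretic orbit--stabilizer bookkeeping, in the same spirit as the reduction used to establish Lemma \ref{reductionone}.
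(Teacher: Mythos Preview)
Your proof is correct and follows essentially the same orbit--stabilizer approach as the paper. You are in fact more careful than the paper in explicitly verifying that $T_{\ints}$ preserves $X_w^i(\delta\sigma)$ and that the summand is well-defined on orbits; the paper takes these for granted and proceeds directly to the same measure-theoretic bookkeeping.
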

\begin{proof}
Since $g_{e'}Ig_{e'}^{-1}$ is the fixer in $G_E$ of $e'$, the size of the fiber of $[e']$ under the morphism  $X_w^i(\delta\sigma)\rightarrow T_{\ints}\backslash X_w^I(\delsig)$ is equal to the index in $T_{\ints}$ of $T_{\ints} \cap g_{e'}Ig_{e'}^{-1}$. The group $g_{e'}Ig_{e'}^{-1}$ is a compact open subgroup of the compact group $T_{\ints}$, and so this index is finite. We have 
$$\sum_{e'\in[e']}\frac{1}{\meas_{dg_T}(T_{\ints}\cap g_{e'}I_Eg_{e'}^{-1})}=\frac{[T_{\ints}:T_{\ints}\cap g_{e'}Ig_{e'}^{-1}]}{\meas_{dg_T}(T_{\ints}\cap g_{e'}I_Eg_{e'}^{-1})};$$ rearranging the terms yields
$$\sum_{e'\in [e']} \frac{1}{\meas_{dg_T}(T_{\ints}\cap g_{e'}I_Eg_{e'}^{-1})}\frac{1}{[T_{\ints}:T_{\ints}\cap g_{e'}Ig_{e'}^{-1}]}=\frac{1}{\meas_{dg_T}(T_{\ints}\cap g_{e'}I_Eg_{e'}^{-1})}.$$
The product of the denominators on the left hand side is exactly $\meas_{dg_T}(T_{\ints})$. Upon substituting we get
\begin{align*}
\sum_{[e']\in T_{\ints}\backslash X_w^i(\delta\sigma)} \frac{1}{\meas_{dg_T}(T_{\ints}\cap g_{e'}I_Eg_{e'}^{-1})} &= \sum_{[e']\in T_{\ints}\backslash X_w^i(\delta\sigma)}\sum_{e'\in [e']} \frac{1}{\meas_{dg_T}(T_{\ints})}\\
&= \frac{1}{\meas_{dg_T}(T_{\ints})}\sum_{e'\in X_w^i(\delta\sigma)} 1\\
&= \frac{\# X_w^i(\delta\sigma)}{\meas_{dg_T}(T_{\ints})}.
\end{align*}
\end{proof}

\subsection{Computing the Orbital Integrals}\label{lemmasec}
Here we compute the orbital integrals. The last steps in the transition from integration to counting depend upon the structure of $T$, and therefore ultimately depend on the ramification or not of $F'$ over $F$.
\subsubsection{$F'$ unramified over $F$}
\begin{lemma}\label{unramdecomp}Assume $T$ is in standard form. Then the group $T$ admits the decomposition $T=\left\langle t_{(1,1)}\right\rangle\times T_{\ints}$.
\end{lemma}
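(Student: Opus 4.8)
The plan is to identify $T$ with the unit group of the quadratic field $F'=\fprime$ and then transport the splitting $(F')^\times=\langle\pi\rangle\times\ints_{F'}^\times$ — which is available precisely because $F'/F$ is unramified — through that identification.

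First I would introduce the $F$-algebra embedding $\phi\colon F'\hookrightarrow M_2(F)$ coming from the regular representation of $F'$ on itself in the basis $\{1,\sqrt D\}$, namely $\phi(\alpha+\beta\sqrt D)=\begin{pmatrix}\alpha & \beta D\\ \beta & \alpha\end{pmatrix}$. Since $T$ is in standard form, every element of $T$ has exactly this shape, and $\det\phi(\alpha+\beta\sqrt D)=\alpha^2-\beta^2D=N_{F'/F}(\alpha+\beta\sqrt D)$ is nonzero whenever $\alpha+\beta\sqrt D\neq 0$; hence $\phi$ restricts to a group isomorphism $(F')^\times\isom T$.

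Next I would pin down the image of $T_{\ints}=T\cap G_{\ints}$ under this isomorphism. Because $\val(D)=0$, $\sqrt D\notin F$, and $\care(F)\neq 2$, the monic polynomial $X^2-D$ is irreducible modulo $\primes$ (if its reduction had a root it would be a simple root, and Hensel's lemma would put $\sqrt D$ in $F$), so $F'/F$ is unramified with $\ints_{F'}=\ints[\sqrt D]$. Consequently $\phi(\alpha+\beta\sqrt D)$ has all entries in $\ints$ iff $\alpha,\beta\in\ints$, i.e. iff $\alpha+\beta\sqrt D\in\ints_{F'}$, and such a matrix has determinant in $\intsunits$ iff $N_{F'/F}(\alpha+\beta\sqrt D)\in\intsunits$ iff $\alpha+\beta\sqrt D\in\ints_{F'}^\times$. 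Thus $\phi$ carries $\ints_{F'}^\times$ isomorphically onto $T_{\ints}$.

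Finally, since $F'/F$ is unramified, $\pi$ is a uniformizer of $F'$, so $(F')^\times=\pi^{\Z}\times\ints_{F'}^\times$; applying $\phi$ and noting $\phi(\pi)=\pi\cdot\mathrm{Id}_2=t_{(-1,-1)}=t_{(1,1)}^{-1}$ gives $T=\langle t_{(1,1)}\rangle\times T_{\ints}$. The only routine checks are that this is an \emph{internal} direct product: the factors commute since $T$ is abelian, and $\langle t_{(1,1)}\rangle\cap T_{\ints}=\{1\}$ because $t_{(1,1)}^n=\pi^{-n}\mathrm{Id}_2\in G_{\ints}$ forces $n=0$. I do not anticipate a real obstacle; the one place the hypothesis genuinely enters is the identification $\ints_{F'}=\ints[\sqrt D]$ (equivalently, that $\pi$ remains a uniformizer upstairs), which fails in the ramified case and is exactly why a separate argument is needed there.
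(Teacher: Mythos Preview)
Your proof is correct and takes a genuinely different route from the paper's. The paper argues geometrically: since $T$ acts on $\build(G_{F'})$ preserving both the extended apartment $\apt'(F')$ and the sub-building $\build(G)$, and since the only extended vertices of $\apt'(F')$ lying in $\build(G)$ are those above $v_0$, one obtains $T\subset Z(G)K$; the decomposition then follows from $Z(G)=\langle t_{(1,1)}\rangle\times(Z(G)\cap K)$ together with $\langle t_{(1,1)}\rangle\subset T$. Your argument instead identifies $T$ explicitly with $(F')^\times$ via the regular representation and transports the standard splitting $(F')^\times=\pi^{\Z}\times\ints_{F'}^\times$, available because $F'/F$ is unramified. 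Your approach is more elementary and self-contained, requiring only basic local field theory rather than the building machinery, and it makes transparent exactly where the unramified hypothesis enters---namely in $\ints_{F'}=\ints[\sqrt D]$ and in $\pi$ remaining a uniformizer upstairs. The paper's approach, by contrast, stays within the building-theoretic framework that drives the rest of the argument and avoids any explicit description of $\ints_{F'}$. One small remark: your Hensel step uses that the derivative $2X$ is nonzero modulo $\primes$, which needs the \emph{residue} characteristic to differ from $2$, not merely $\care(F)\neq 2$; this is in line with the paper's implicit conventions but is worth being precise about.
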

\begin{proof}
The action of $T$ on $\build(G_{F'})$ preserves $\apt'(F')$ and $\build(G)$. The only $F$-points of $\apt'(F')$ are those lying above $v_0$. Since $T\subset G$, its action must preserve the set of points lying above $v_0$. 
The stabilizer of this set is  $Z(G_{F'})K_{F'}$ by definition, so we obtain $T\subset Z(G)K$. But $Z(G)$ decomposes as $\left\langle t_{(1,1)}\right\rangle\times\left(Z(G)\cap K\right)$, and so $T=\left\langle t_{(1,1)}\right\rangle\times T_{\ints}$.
%
%
\end{proof}

\begin{corollary}\label{unramreduction}
If $T$ is in standard form, the twisted orbital integral of a characteristic function is given by
$$\twisted(1_{I_EwI_W})=\frac{1}{\meas_{dg_T}(T_{\ints})}\left[\# X_w^0(\delta\sigma) + \# X_{\overline{w}}^0(\delta\sigma)\right].$$
\end{corollary}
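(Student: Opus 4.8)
The plan is to reduce the twisted orbital integral to its size-$0$ and size-$1$ pieces using the structure of $T$. I would start from Lemma~\ref{reductionone}, which gives
$$\twisted(1_{I_EwI_E}) = \sum_{[e']\in T\backslash X_{w}(\delta\sigma)}\frac{1}{\meas_{dg_{T}}(T\cap g_{e'}I_Eg_{e'}^{-1})}.$$
(Here $T=G_\delta(F)$ genuinely acts on $X_w(\delta\sigma)$: for $t\in T$ one has $\delta\sigma(t)=t\delta$, so $\delta\sigma(te')=\delta\sigma(t)\sigma(e')=t\,\delta\sigma(e')$ and hence $\inv(te',\delta\sigma(te'))=\inv(e',\delta\sigma(e'))$ by $G_E$-equivariance of $\inv$.) Next I would invoke Lemma~\ref{unramdecomp}, $T=\langle t_{(1,1)}\rangle\times T_{\ints}$, and record how the two factors act on the size grading of extended edges: every element of $T_{\ints}\subset K_E$ has determinant of $E$-valuation $0$ and so preserves size, whereas $\val_E\det(t_{(1,1)})=\val_E(\pi^{-2})=-2$ (recall $\pi=\pi_F$ is also a uniformizer of the unramified extension $E$), so $t_{(1,1)}$ shifts size by $-2$ and in particular acts freely on the set of extended edges.

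Since $T$ is abelian, the $\langle t_{(1,1)}\rangle$- and $T_{\ints}$-actions commute, so the $\langle t_{(1,1)}\rangle$-orbits on the $\Z$-grading by size are exactly the cosets of $2\Z$. Hence each $T$-orbit in $X_w(\delta\sigma)$ meets $X_w^0(\delta\sigma)\cup X_w^1(\delta\sigma)$, in fact meeting exactly one of these in exactly one $T_{\ints}$-orbit; this gives a bijection $T\backslash X_w(\delta\sigma)\cong\bigl(T_{\ints}\backslash X_w^0(\delta\sigma)\bigr)\sqcup\bigl(T_{\ints}\backslash X_w^1(\delta\sigma)\bigr)$. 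Moreover, for $e'$ of size $0$ or $1$ the stabilizer of $e'$ in $T$ equals its stabilizer in $T_{\ints}$ (an element $t_{(1,1)}^k u$ with $u\in T_{\ints}$ fixing $e'$ forces $k=0$ by comparing sizes), so $T\cap g_{e'}I_Eg_{e'}^{-1}=T_{\ints}\cap g_{e'}I_Eg_{e'}^{-1}$ on those representatives. Substituting into the displayed formula splits the sum into the two pieces $\sum_{[e']\in T_{\ints}\backslash X_w^i(\delta\sigma)}\meas_{dg_T}(T_{\ints}\cap g_{e'}I_Eg_{e'}^{-1})^{-1}$ for $i=0,1$, each of which Lemma~\ref{oneofflemma} evaluates as $\#X_w^i(\delta\sigma)/\meas_{dg_T}(T_{\ints})$. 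Finally, Lemma~\ref{sizelemma} applied with $n=0$ and $w$ replaced by $\overline w$ (using $\overline{\overline w}=w$) gives $\#X_w^1(\delta\sigma)=\#X_{\overline w}^0(\delta\sigma)$, which is the asserted identity.

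The only step requiring care is the orbit bookkeeping in the second paragraph: verifying that each $T$-orbit lands in exactly one of the size-$0$ or size-$1$ buckets, that it does so as a single $T_{\ints}$-orbit, and that the point-stabilizers match up under the bijection. This is elementary once the freeness of the $\langle t_{(1,1)}\rangle$-action and the size-invariance of $T_{\ints}$ are in hand; everything else is a direct chain of the preceding lemmas.
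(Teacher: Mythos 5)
Your proposal is correct and follows essentially the same route as the paper: start from Lemma~\ref{reductionone}, decompose $T=\langle t_{(1,1)}\rangle\times T_{\ints}$ via Lemma~\ref{unramdecomp}, observe that $t_{(1,1)}$ shifts size by $-2$ so that $T\backslash X_w(\delta\sigma)$ is represented by $T_{\ints}\backslash(X_w^0(\delta\sigma)\sqcup X_w^1(\delta\sigma))$, apply Lemma~\ref{oneofflemma} to each piece, and convert $\#X_w^1(\delta\sigma)$ to $\#X_{\overline{w}}^0(\delta\sigma)$ via Lemma~\ref{sizelemma}. The only difference is that you spell out two points the paper leaves implicit (that the $T$-action on $X_w(\delta\sigma)$ is well defined, and that the point-stabilizer in $T$ of a size-$0$ or size-$1$ edge agrees with its stabilizer in $T_{\ints}$, which is needed to pass from the integrand of Lemma~\ref{reductionone} to that of Lemma~\ref{oneofflemma}), which is a useful sanity check but not a departure in method.
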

\begin{proof}
Lemma \ref{unramdecomp} tells us that $T\backslash X_w(\delta\sigma)=\left\langle t_{(1,1)}\right\rangle \times T_{\ints}\backslash X_w(\delta\sigma)$. The action of $t_{(1,1)}^n$ on $X_w(\delta\sigma)$ restricts to a bijection of $X_w^i(\delta\sigma)$ onto $X_w^{i-2n}(\delta\sigma)$, and therefore $T\backslash X_w(\delta\sigma)=T_{\ints}\backslash (X_w^0(\delta\sigma)\coprod X_w^1(\delta\sigma))$. This and lemmas \ref{reductionone} and \ref{oneofflemma} imply that 
$$\twisted(1_{I_EwI_W})=\frac{1}{\meas_{dg_T}(T_{\ints})}\left[\# X_w^0(\delta\sigma) + \# X_{w}^1(\delta\sigma)\right].$$ The result follows since $\#X_{w}^1(\delta\sigma)=\#X_{\overline{w}}^0(\delta\sigma)$ (recall \ref{sizelemma}).
\end{proof}
\begin{theorem}\label{flunramorbit}
Normalize $dg_T$ so that $\meas_{dg_T}(T_{\ints})=1$. Fix a semi-simple regular elliptic $\gamma\in G$ such that $F'/F$ is unramified, let $\Delta(\gamma)=q^{-a}$, and let $\mu \in X_*(S)$. Then 
$$\orb(z_{\mu	})=\begin{cases} 
			0, & \val\det(\gamma)\neq \size(\mu)\\
			2(q^a-1)\left(\frac{q+1}{q-1}\right), & \ell(\mu)=0\\
			2q^{-\ell(\mu)/2}(1-(-q)^{\ell(\mu)}), & \ell(\mu)\neq 0\end{cases}
$$
\end{theorem}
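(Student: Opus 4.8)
We first turn $\orb(z_\mu)$ into a weighted count of edges. As both $\orb(z_\mu)$ and $\Delta(\gamma)$ are invariant under $G$-conjugacy, we may assume $\gamma$, and hence $T$, is in standard form, so that $v_0$ is the unique vertex of $\apt(F')$ fixed by $\gal(F',F)$ and has type zero (section~\ref{lunram}). Expanding $z_\mu$ by Corollary~\ref{berncor}, using linearity of $\orb$, and applying the $E=F$ case of Corollary~\ref{unramreduction} (in which $\delta=\gamma$, $\delta\sigma=\gamma$, and $\meas_{dg_T}(T_{\ints})=1$), one gets $\orb(1_{IwI})=\#X^0_w(\gamma)+\#X^0_{\overline w}(\gamma)$, hence
$$\orb(z_\mu)=\sum_w z_\mu(w)\left(\#X^0_w(\gamma)+\#X^0_{\overline w}(\gamma)\right),$$
the sum over $\mu$-admissible $w$, with each $\#X^0_w(\gamma)$ given by Lemma~\ref{unramcount}.

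Two cases are immediate. Every $\mu$-admissible $w$ has size $\size(\mu)$, so if $\vdg\neq\size(\mu)$ then $X^0_w(\gamma)=\emptyset$ for every term (Theorem~\ref{distance}) and $\orb(z_\mu)=0$. If $\ell(\mu)=0$ the only admissible $w$ is $(0,0,s)$, with $\overline w=w$ and $z_\mu(w)=q^{-\ell(\mu)/2}=1$, so $\orb(z_\mu)=2\#X^0_{(0,0,s)}(\gamma)=2(q^a-1)\left(\frac{q+1}{q-1}\right)$ by Lemma~\ref{unramcount}.

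Now assume $\ell:=\ell(\mu)\ge1$ and write $z_\mu^r=q^{-\ell/2}c_r$, where $c_0=1$ and $c_r=((-q)^r-1)\frac{q-1}{q+1}$ for $r\ge1$. I would treat the three families of $\mu$-admissible $w$ in Corollary~\ref{berncor} in turn. For $w=(\pm j,0,s)$ of even nonzero length, $X^0_w(\gamma)=\emptyset$: a non-$\gamma$-fixed edge $e$ has $\ell(e,\gamma e)$ odd (namely $2r-1$ for $e$ at distance $a+r$ from $\apt(F')$, as in the proof of Lemma~\ref{unramcount}), so its relative distance has $b=1$, whereas $\gamma$-fixed edges have relative distance $(0,0,s)$; hence the even-length part of Corollary~\ref{berncor} contributes nothing. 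Since $\overline{(-i,1,s)}=(i-1,1,s)$, Lemma~\ref{unramcount} gives $\#X^0_{(-i,1,s)}(\gamma)+\#X^0_{(i-1,1,s)}(\gamma)=(q+1)q^{a+i-1}$ for $i\ge1$ (depending on the parity of $a+i$, one summand is $0$ and the other $(q+1)q^{a+i-1}$), so $\orb(1_{(-i,1,s)})+\orb(1_{(i-1,1,s)})=2(q+1)q^{a+i-1}$. And since $\overline{(0,0,s)}=(0,0,s)$, the length-zero term of Corollary~\ref{berncor} contributes $z_\mu^\ell\cdot2\#X^0_{(0,0,s)}(\gamma)=2q^{-\ell/2}(q^a-1)\left((-q)^\ell-1\right)$, using $c_\ell\cdot\frac{q+1}{q-1}=(-q)^\ell-1$. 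Summing,
$$\orb(z_\mu)=2q^{-\ell/2}\left[(q^a-1)\left((-q)^\ell-1\right)+(q+1)\sum_{1\le i\le\frac{\ell+1}{2}}q^{a+i-1}c_{\ell-2i+1}\right].$$

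It remains to show the bracket equals $1-(-q)^\ell$. Rewriting $(q^a-1)\left((-q)^\ell-1\right)=q^a\left((-q)^\ell-1\right)+\left(1-(-q)^\ell\right)$, this is equivalent to the elementary identity
$$(q+1)\sum_{1\le i\le\frac{\ell+1}{2}}q^{i-1}c_{\ell-2i+1}=1-(-q)^\ell,$$
which follows by summing the geometric series: since $(-q)^{\ell-2i+1}=(-1)^{\ell+1}q^{\ell-2i+1}$, splitting on the parity of $\ell$ (when $\ell$ is odd, the top index $i=\frac{\ell+1}{2}$ supplies the $c_0=1$ summand) collapses the left-hand side. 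This gives $\orb(z_\mu)=2q^{-\ell/2}(1-(-q)^\ell)$. I expect the one genuine difficulty to be the bookkeeping here --- aligning the parity dichotomy of Lemma~\ref{unramcount} with the indexing of the sum, and performing the geometric collapse so that all dependence on $a$, and therefore on the $G$-conjugacy class of $\gamma$, cancels (the surprising feature advertised in the introduction). The rest is a routine application of the building computations already established.
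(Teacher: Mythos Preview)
Your proposal is correct and follows essentially the same route as the paper: reduce to standard form, apply Corollary~\ref{unramreduction} and Lemma~\ref{unramcount}, discard the even-length contributions, use $\overline{(-i,1,s)}=(i-1,1,s)$ to get the sum $(q+1)q^{a+i-1}$, and then collapse the resulting geometric series. The only difference is organizational---you isolate the $a$-independent identity $(q+1)\sum_{i}q^{i-1}c_{\ell-2i+1}=1-(-q)^\ell$ before splitting on the parity of $\ell$, whereas the paper splits into the odd and even cases first and carries out the algebra explicitly in each; both amount to the same computation.
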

\begin{proof}
Since $\orb$ depends only on the conjugacy class of $\gamma$, we may assume without loss of generality that $\gamma$ is in standard form.
The proof rests on lemma \ref{unramcount} and corollary \ref{berncor}; the reader may find it useful to refer back them at this point. Together they imply that the integral will be zero for all $\gamma$ with size not equal to $\size(\mu)$, so suppose that $s=\size(\gamma)=\size(\mu)$.

When $\ell(\mu)=0$, \ref{unramcount} and \ref{berncor} tell us immediately that 
$$\orb(z_\mu)=2(q^a-1)\left(\frac{q+1}{q-1}\right).$$ 

Suppose that $\ell(\mu)\neq 0$.
Comparing the expansion of $z_{\mu}$ in \ref{berncor} with the list of non-empty $\xwgz$ in \ref{unramcount} we see that, other than the length zero term, the even length $\mu$-admissible $w$ contribute nothing to the sum (these are the $1_{IwI}$ with $b=0$, and all such \xwgz\ except the zero-length term are empty). By corollary \ref{unramreduction} the integral is 
\begin{align*}\orb(z_{\mu}) &= z_{\mu}^{\ell(\mu)}\left[\#X^0_{(0,0,s)}(\gamma)+\#X^0_{(0,0,s)}(\gamma)\right]\\
  		&\qquad + \sum_{1\leq i\leq \frac{\ell(\mu)+1}{2}} 
 			z_{\mu}^{\ell(\mu)-2i+1} \left[\left(
 			\#X^0_{(-i,1,s)}(\gamma)+\#X^0_{\overline{(-i,1,s)}}(\gamma)
 			\right)\right.\\
		&\qquad\qquad\qquad\qquad\qquad\qquad \left.+ \left(\#X^0_{(i-1,1,s)}(\gamma)+
 			\#X^0_{\overline{(i-1,1,s)}}(\gamma)
 			\right)\right].\\
\intertext{Since $\overline{(-i,1,s)}=(i-1,1,s)$ and vice-versa for all $i\geq 1$, we get}
 \orb(z_{\mu})	&= 2\left[z_{\mu}^{\ell(\mu)}\left(\#X^0_{(0,0,s)}(\gamma)\right)\right.\\
		&\qquad+\sum_{1\leq i\leq \frac{\ell(\mu)+1}{2}} 
			\left.z_{\mu}^{\ell(\mu)-2i+1} \left(
			\#X^0_{(-i,1,s)}(\gamma) + \#X^0_{(i-1,1,s)}(\gamma)\right)\right].
\end{align*}

  A further simplification of the sum occurs when we pay attention to the issues of parity or disparity between $a$ and $i$, which varies as we go through the sum of the odd length edges.
 Indeed, between the pair $\#X^0_{(-i,1,s)}(\gamma)$ and $\#X^0_{(i-1,1,s)}(\gamma)$ which appear in each iteration of the summation, only one will be non-zero; either $i$ and $a$ share parity, in which case $X^0_{(i-1,1,s)}(\gamma)$ is empty, or $i$ and $a$ do not share parity, in which case $X^0_{(-i,1,s)}(\gamma)$ is empty. Thus $(\#X^0_{(-i,1,s)}(\gamma)+\#X^0_{(i-1,1,s)}(\gamma))=(q+1)q^{a+i-1}$ for all $i=1,\ldots,\frac{\ell(\mu)+1}{2}$.

We must, because of the peculiarities of the coefficients and sums involved, handle the case of $\ell(\mu)$ even separately from the case of $\ell(\mu)$ odd. We pursue the odd case first; let $\ell(\mu)=2m-1$. The above and substituting for the coefficients in theorem \ref{berncoefs} result in  
\begin{align*}
 \orb(z_{\mu}) &= 2q^{-\frac{\ell(\mu)}{2}}\left[((-q)^{2m-1}-1)\frac{q-1}{q+1} (q^a-1)\frac{q+1}{q-1}\right.\\
		&\qquad\qquad+\left.\sum_{1\leq i\leq m-1} ((-q)^{2m-2i}-1)\frac{q-1}{q+1} \bigg((q+1)q^{a+i-1}\bigg)+(q+1)q^{a+m-1}\right]\\
		&= 2q^{-\frac{\ell(\mu)}{2}}\left[-(q^{2m-1}+1)(q^a-1)\right.\\
		&\qquad\qquad\left.+q^{a-1}(q-1)\sum_{1\leq i\leq m} (q^{2m-i}-q^i)+q^a(q^m+q^{m-1})\right]\\
		&= 2q^{-\frac{\ell(\mu)}{2}}\left[-(q^{2m-1}+1)(q^a-1)\right.\\
		&\qquad\qquad\left.+q^{a-1}(q-1)(q^m-q)\left(\frac{q^m-1}{q-1}\right)+q^a(q^m+q^{m-1})\right]\\
		&= 2q^{-\frac{\ell(\mu)}{2}}\left[-(q^{2m-1}+1)(q^a-1)
		+q^{a}(q^{m-1}-1)(q^m-1)+q^a(q^m+q^{m-1})\right]\\
		&= 2q^{-\frac{\ell(\mu)}{2}}\left[q^{2m-1}+1\right.\\
		&\qquad\qquad\left.+ q^a(-q^{2m-1}-1 +q^{2m-1}-q^m-q^{m-1}+1+q^m+q^{m-1})\right]\\
		&= 2q^{-\frac{\ell(\mu)}{2}}\left[q^{2m-1}+1\right].
\end{align*}
Next we consider the case where $\ell(\mu)=2m$ is even. As with the odd case, we may rewrite our sum as
\begin{align*}
 \orb(z_{\mu})) &=  2q^{-\frac{\ell(\mu)}{2}}\left[((-q)^{2m}-1)\frac{q-1}{q+1} (q^a-1)\frac{q+1}{q-1}\right.\\
		&\qquad\qquad+\left.\sum_{1\leq i\leq m} ((-q)^{2m-2i+1}-1)\frac{q-1}{q+1} \left((q+1)q^{a+i-1}\right)\right]\\
	&= 2q^\frac{-2m}{2}\left[(q-1)q^{a-1}\left[-\sum_{i=1}^{2m}(q^i)\right] + q^a(q^{2m}-1) +(1-(-q)^{2m})\right]\\
	&= 2q^\frac{-2m}{2}\left[(q-1)q^{a-1}(-q)\left(\frac{q^{2m}-1}{q-1}\right) + q^a(q^{2m}-1) +(1-(-q)^{2m})\right]\\
	&= 2q^\frac{-2m}{2}\left[-q^{a}(q^{2m}-1)+ q^a(q^{2m}-1) +(1-(-q)^{2m})\right]\\
	&= 2q^\frac{-2m}{2}\left[1-(-q)^{2m}\right].
\end{align*}
\end{proof}

\subsubsection{$F'$ ramified over $F$}
The structure of $T$ in this case is different from before in an important way; namely, it possesses an element $g$ with $\val_F\det(g)=1$.
\begin{lemma}\label{unramtorus}
Let $F'/F$ be ramified. Then
$$T= \left\langle\left(\begin{smallmatrix}0 & D\\ 1 & 0 \end{smallmatrix}\right)\right\rangle\times T_{\ints},$$ where $T_{\ints}$ is as above equal to $T\cap K$.
\end{lemma}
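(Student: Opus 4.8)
The plan is to produce the claimed extra generator explicitly and then reduce the statement to a one-line valuation computation. Set $w_0 = \left(\begin{smallmatrix}0 & D \\ 1 & 0\end{smallmatrix}\right)$. Taking $\alpha = 0$ and $\beta = 1$ in the standard-form description of $T$ from subsection~\ref{oldellipticsec} shows $w_0 \in T$, and $\det(w_0) = -D$ has $\val_F = 1$ because $F'/F$ is ramified; this is the only place the ramification hypothesis is used. A quick matrix multiplication gives $w_0^2 = D\cdot I$, so $w_0^{2k} = D^k I$ and $w_0^{2k+1} = D^k w_0$ for all $k\in\Z$; since $\val_F(D) = 1$, no nonzero power of $w_0$ is the identity, hence $\langle w_0\rangle \cong \Z$, and $\langle w_0\rangle$ commutes with $T_\ints$ because $T$, being a maximal torus, is abelian.

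Next I would verify the two conditions that make this an internal direct product. First, $\langle w_0\rangle \cap T_\ints = \{1\}$: every element of $\langle w_0\rangle$ has determinant whose $\val_F$ equals its exponent, while every element of $T_\ints = T\cap K = T\cap G_\ints$ has determinant of $\val_F$ zero, so only $w_0^0 = I$ lies in the intersection. Second, and this is the substantive point, $T = \langle w_0\rangle\cdot T_\ints$.

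For the generation statement, given $g\in T$ put $n = \val_F\det(g)$ and $g' = w_0^{-n}g \in T$, so that $\val_F\det(g') = 0$; it suffices to show $g'\in G_\ints$. Write $g' = \left(\begin{smallmatrix}\alpha & \beta D \\ \beta & \alpha\end{smallmatrix}\right)$, so $\det(g') = \alpha^2 - \beta^2 D$. The key observation is a parity mismatch: $\val_F(\alpha^2) = 2\val_F(\alpha)$ is even, while $\val_F(\beta^2 D) = 2\val_F(\beta)+1$ is odd, so these two valuations are distinct and $\val_F(\det g') = \min\bigl(2\val_F(\alpha),\,2\val_F(\beta)+1\bigr)$. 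Setting this equal to $0$ and using that an odd integer is never $0$ forces $\val_F(\alpha) = 0$ and $2\val_F(\beta)+1 \ge 1$, i.e.\ $\val_F(\beta)\ge 0$, whence also $\val_F(\beta D) = \val_F(\beta)+1 \ge 0$. Thus every entry of $g'$ lies in $\ints$ and $\det(g')\in\intsunits$, so $g' \in T\cap G_\ints = T_\ints$ and $g = w_0^n g'$, as required.

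I do not anticipate a genuine obstacle; the one thing to be careful about is that the ramification hypothesis is precisely what makes the argument run. It enters only through $\val_F(D) = 1$ being odd: in the unramified case $\val_F(D) = 0$, the parity argument collapses, and indeed the correct decomposition is the different one of Lemma~\ref{unramdecomp}, with the extra generator being the central element $t_{(1,1)}$ of even determinant valuation in place of $w_0$ of odd determinant valuation.
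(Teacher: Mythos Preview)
Your proof is correct and follows essentially the same approach as the paper's: both hinge on the parity mismatch $\val_F(\alpha^2)$ even versus $\val_F(\beta^2 D)$ odd to pin down the valuations of the entries. Your version is slightly more streamlined in that you first normalize to $\val_F\det(g')=0$ and then run the parity argument once, whereas the paper carries the determinant valuation $m$ along and splits into the cases $m$ even and $m$ odd before multiplying out $g'g^{-m}$.
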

\begin{proof}
Let $g=\left(\begin{smallmatrix}0 & D\\ 1 & 0 \end{smallmatrix}\right)$, let $g'$ be an arbitrary element of $T$, and suppose that $\val_F\det(g')=m$. We will show that $g'$ may be written as $kg^m$, with $k\in T_{\ints}$. Since it is clear (from the valuations of the individual components) that $T_{\ints}\cap\left\langle g \right\rangle = \left(\begin{smallmatrix}1 & 0\\ 0 & 1 \end{smallmatrix}\right)$, we will then have the desired equality.

By assumption, $g'$ is of the form $\left(\begin{smallmatrix} \alpha & \beta D\\ \beta & \alpha\end{smallmatrix}\right)$, so that $m=\val_F(\alpha^2 - \beta^2 D)=\min(2\val_F(\alpha),2\val_F(\beta)+1)$. 
If $m$ is even, then we must have $m=2\val_F(\alpha)<2\val_F(\beta)+1$, which may be rewritten $\frac{m}{2}=\val_F(\alpha)\leq \val_F(\beta)$. It is then an easy exercise to show that $g'g^{-m}$ lies in $T_{\ints}$, and we have $g'=(g'g^{-m})g^{m}$.
If, on the other hand, $m$ is odd, then it must be that $m=2\val_F(\beta)+1<2\val_F(\alpha)$, i.e. $\frac{m-1}{2}=\val_F(\beta)< \val_F(\alpha)$. Another easy exercise gives us $g'=(g'g^{-m})g^m$, with $g'g^{-m}\in T_{\ints}$.
\end{proof}
Compare the following with corollary \ref{unramreduction}; as with that corollary, in this section the following corollary will be used in the degenerate case $E=F$; later we shall have need of it in full generality.
\begin{corollary}\label{ramreduction}
Let $F'/F$ be ramified. Then
$$\twisted(1_{IwI}) = \frac{1}{\meas_{T}(T_{\ints})}\# X_w^0(\delsig).$$
\end{corollary}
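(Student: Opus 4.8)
The plan is to run the argument of Corollary~\ref{unramreduction}, but with the decomposition $T=\langle g_0\rangle\times T_\ints$ of Lemma~\ref{unramtorus}, where $g_0=\left(\begin{smallmatrix}0 & D\\ 1 & 0\end{smallmatrix}\right)$, in place of the decomposition $T=\langle t_{(1,1)}\rangle\times T_\ints$ used there. The one numerical input that changes is the size shift: since $F'/F$ is ramified we have $\val_F(D)=1$, and since $E/F$ is unramified $\val_E$ extends $\val_F$, so $\val_E\det(g_0)=1$. Thus $g_0$ (and more generally $g_0^n$) shifts the size of an extended edge of $\build(G_E)$ by exactly $1$ (resp.\ $n$), rather than by $-2$ as $t_{(1,1)}$ did.

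First I would check that $\langle g_0\rangle$ acts on $X_w(\delta\sigma)$ for each $w\in\wae$, carrying $X_w^i(\delta\sigma)$ bijectively onto $X_w^{i+1}(\delta\sigma)$. By Lemma~\ref{toruslemma}, $g_0\in T=G_\delta(F)$, so $g_0^{-1}\delta\sigma(g_0)=\delta$; since the entries of $g_0$ lie in $F$ we have $\sigma(g_0)=g_0$, hence $g_0\delta=\delta g_0$, and because $e_0'$ is $\gal(E/F)$-fixed the action of $g_0$ on $\build(G_E)$ commutes with that of $\delta\sigma$. Invariance of $\inv$ under the diagonal $G_E$-action then gives $\inv(g_0e',\delta\sigma(g_0e'))=\inv(g_0e',g_0\,\delta\sigma(e'))=\inv(e',\delta\sigma(e'))$, so $g_0e'\in X_w(\delta\sigma)$ whenever $e'\in X_w(\delta\sigma)$, and the size shift computed above makes $g_0\colon X_w^i(\delta\sigma)\to X_w^{i+1}(\delta\sigma)$ a bijection.

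It follows that each $\langle g_0\rangle$-orbit in $X_w(\delta\sigma)$ meets $X_w^0(\delta\sigma)$ in exactly one edge, so (using that $T_\ints$ is abelian, commutes with $g_0$, and preserves sizes) the inclusion $X_w^0(\delta\sigma)\hookrightarrow X_w(\delta\sigma)$ induces a bijection $T_\ints\backslash X_w^0(\delta\sigma)\isom T\backslash X_w(\delta\sigma)$. Moreover, for $e'\in X_w^0(\delta\sigma)$ the fixer $g_{e'}I_Eg_{e'}^{-1}$ of $e'$ consists of elements $h$ with $\val_E\det(h)=0$ (the fixer of $e_0'$ is $I_E$); writing an element of $T\cap g_{e'}I_Eg_{e'}^{-1}$ as $g_0^n t_0$ with $t_0\in T_\ints$, we get $n=\val_E\det(g_0^n t_0)=0$, so $T\cap g_{e'}I_Eg_{e'}^{-1}=T_\ints\cap g_{e'}I_Eg_{e'}^{-1}$. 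Substituting both facts into Lemma~\ref{reductionone} and applying Lemma~\ref{oneofflemma} with $i=0$ gives
\[
\twisted(1_{IwI})=\sum_{[e']\in T_\ints\backslash X_w^0(\delta\sigma)}\frac{1}{\meas_{dg_T}(T_\ints\cap g_{e'}I_Eg_{e'}^{-1})}=\frac{\#X_w^0(\delta\sigma)}{\meas_{dg_T}(T_\ints)},
\]
as claimed.

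The only delicate point---really a verification rather than an obstacle---is the commutation of $\langle g_0\rangle$ with the $\delta\sigma$-action together with the fact that $g_0$ shifts size by exactly $1$; this last fact is precisely where the ramification of $F'/F$ enters, and it is what replaces the two-term conclusion $\#X_w^0+\#X_w^1$ of the unramified Corollary~\ref{unramreduction} by the single term $\#X_w^0$ here.
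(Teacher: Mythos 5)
Your argument is correct and follows the same route as the paper: decompose $T$ via Lemma~\ref{unramtorus}, note that $g_0=\left(\begin{smallmatrix}0 & D\\ 1 & 0\end{smallmatrix}\right)$ shifts size by $1$ so that $T\backslash X_w(\delta\sigma)$ is identified with $T_\ints\backslash X_w^0(\delta\sigma)$, and then apply Lemmas~\ref{reductionone} and~\ref{oneofflemma}. The paper's proof is terse and omits the verifications you carry out explicitly (that $g_0$ commutes with $\delta\sigma$ on $\build(G_E)$, the exact size shift, and that $T\cap g_{e'}I_Eg_{e'}^{-1}=T_\ints\cap g_{e'}I_Eg_{e'}^{-1}$ for size-zero edges), but these are exactly the details needed to justify the one-line argument.
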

\begin{proof} 
From lemma \ref{unramtorus} we know that $T\backslash X_w(\delsig)=\left(\langle\left(\begin{smallmatrix} 0 & D\\ 1 & 0\end{smallmatrix}\right)\rangle\times T_{\ints}\right)\backslash X_w^0(\delsig)$. For every $i\in\Z$, the action of $\langle\left(\begin{smallmatrix} 0 & D\\ 1 & 0\end{smallmatrix}\right)\rangle$ on $X_w(\delsig)$ restricts to a bijection from $X_w^i(\delsig)$ to $X^0_w(\delsig)$. The result follows immediately from this and lemmas \ref{reductionone} and \ref{oneofflemma}.
\end{proof}

\begin{theorem}\label{flramorbit}
Let $F'/F$ be ramified, normalize $dg_T$ such that $\meas_{dg_T}(T_{\ints})=1$, fix a semi-simple regular elliptic $\gamma\in G$, let $\Delta(\gamma)=q^{-a-\frac{d_T+1}{2}}$, and let $\mu\in X_*(S)$. Then 
$$\orb(z_{\mu})=\begin{cases} 
			0, & \val_F\det(\gamma)\neq \size(\mu)\\
			\frac{2q^{a+1}-q-1}{q-1}, & \ell(\mu)=0\\
			q^{-\ell(\mu)/2}(1-q^{\ell(\mu)}), & \ell(\mu)\neq 0\end{cases}
$$
\end{theorem}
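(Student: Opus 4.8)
The argument follows the template of the proof of Theorem~\ref{flunramorbit}, with Lemma~\ref{ramcount} and Corollary~\ref{ramreduction} replacing their unramified analogues. Since $\orb$ depends only on the conjugacy class of $\gamma$, I would first replace $\gamma$ by a conjugate in standard form, so that $T$ has the shape of Lemma~\ref{unramtorus} and $e_T$, $p_T$, $d_T$ are as in subsection~\ref{lram}, with $\Delta(\gamma)=q^{-a-(d_T+1)/2}$. Taking $E=F$, $\delta=\gamma$, $\sigma=\mathrm{id}$ in Corollary~\ref{ramreduction}, together with the normalization $\meas_{dg_T}(T_{\ints})=1$, gives $\orb(1_{I w I})=\#X_w^0(\gamma)$ for every $w\in\wae$, so that $\orb(z_\mu)=\sum_{w\in\wae} z_\mu(w)\,\#X_w^0(\gamma)$.

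For the vanishing clause, Theorem~\ref{distance}(\ref{distone}) forces $X_w^0(\gamma)$ to be empty unless $\size(w)=\val_F\det(\gamma)$, while $z_\mu(w)=0$ unless $w$ is $\mu$-admissible, i.e.\ $\size(w)=\size(\mu)$ (subsection~\ref{admiss}); hence every term vanishes when $\val_F\det(\gamma)\neq\size(\mu)$. From now on set $s:=\val_F\det(\gamma)=\size(\mu)$, and note that $s$ and $\ell(\mu)$ have the same parity. If $\ell(\mu)=0$, the only $\mu$-admissible element is $(0,0,s)$, $s$ is even, $z_\mu$ attaches the coefficient $1$ to it, and Lemma~\ref{ramcount}(\ref{ramcounteven}) gives $\#X_{(0,0,s)}^0(\gamma)=(2q^{a+1}-q-1)/(q-1)$, which is the asserted value.

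For $\ell(\mu)\neq 0$, I would substitute the expansion of $z_\mu$ from Corollary~\ref{berncor} into the sum above and retain only the surviving terms. When $\ell(\mu)$ is odd, $s$ is odd and Lemma~\ref{ramcount}(\ref{ramcountodd}) says that only the even-length admissible elements $w=(\pm r,0,s)$, $0\le r\le\tfrac12(\ell(\mu)-1)$, contribute, each with $\#X^0_{(\pm r,0,s)}(\gamma)=q^{r}$; when $\ell(\mu)$ is even and nonzero, $s$ is even and Lemma~\ref{ramcount}(\ref{ramcounteven}) says that only $(0,0,s)$ and the odd-length pairs $(-i,1,s),(i-1,1,s)$ survive, with cardinalities $(2q^{a+1}-q-1)/(q-1)$ and $q^{a+i}$ respectively. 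Plugging in the Bernstein coefficients from Theorem~\ref{berncoefs} and evaluating the resulting finite geometric sums should give $q^{-\ell(\mu)/2}(1-q^{\ell(\mu)})$ in both parities.

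This last bookkeeping step is the only place where real care is needed, and it is where I expect the main obstacle to lie. Three points: (i) the coefficient attached to the length-zero edge $(0,0,s)$ is $z_\mu^{\ell(\mu)}=q^{-\ell(\mu)/2}((-q)^{\ell(\mu)}-1)\tfrac{q-1}{q+1}$ — the $r\ge 1$ branch of Theorem~\ref{berncoefs}, since $\ell(\mu)-\ell((0,0,s))=\ell(\mu)$ — not $q^{-\ell(\mu)/2}$; (ii) unlike the unramified computation, Corollary~\ref{ramreduction} yields the single count $\#X_w^0(\gamma)$ rather than $\#X_w^0(\gamma)+\#X_{\overline w}^0(\gamma)$, and in the ramified setting both edges of a given odd length carry equal cardinality, so no parity split of the kind used in the proof of Theorem~\ref{flunramorbit} is required; (iii) the dependence on $a$ cancels — in the even case (write $\ell(\mu)=2m$) one factors $(q^{2m}-1)$ out of the whole expression and is left with $(2q^{a+1}-q-1)-2q^{a+1}=-(q+1)$, giving $q^{-m}(1-q^{2m})$, while in the odd case (write $\ell(\mu)=2m+1$) the relevant sum $\sum_{r=1}^{m}(q^{2m+1-r}+q^{r})$ collapses to $q(q^{2m}-1)/(q-1)$, the factor $q+1$ drops out, and one is left with $q^{-(2m+1)/2}(1-q^{2m+1})$.
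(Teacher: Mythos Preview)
Your proposal is correct and follows essentially the same route as the paper: reduce to standard form, invoke Corollary~\ref{ramreduction} (with $E=F$) to convert $\orb(1_{IwI})$ into $\#X_w^0(\gamma)$, split according to the parity of $s=\val_F\det(\gamma)=\size(\mu)$, feed in Lemma~\ref{ramcount} and the expansion of Corollary~\ref{berncor}, and evaluate the resulting geometric sums. Your three cautionary remarks (i)--(iii) are exactly the points the paper's computation turns on; the only cosmetic difference is that the paper parametrizes the odd case as $\ell(\mu)=2m-1$ rather than $2m+1$, which changes nothing of substance.
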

\begin{proof}
It is clear that the integral is zero when $\val_F\det(\gamma)\neq\size(\mu)$. When $\ell(\mu)$ is zero, lemmas \ref{berncoefs} and \ref{ramcount} imply that 
$$\orb(z_{\mu})=\frac{2q^{a+1}-q-1}{q-1}.$$

Now let $\ell(\mu)\neq 0$. If $\val_F\det(\gamma)$ is odd then so is $\ell(\mu)$; let $\ell(\mu)=2m-1$. Then lemma \ref{ramcount} (\ref{ramcountodd}) tells us that all the non-empty \xwgz\ have $w$ of even length. That same lemma, in addition to  lemma \ref{berncoefs} and corollaries \ref{berncor} and \ref{ramreduction} tell us that 
\begin{align*}
\orb(z_{\mu}) &= z_\mu^{\ell(\mu)}\#X^0_{(0,0,s)}(\gamma)+\sum_{1\leq i\leq \frac{\ell(\mu)}{2}} z_\mu^{\ell(\mu)-2i}\left[\#X^0_{(-i,0,s)}(\gamma)+\#X^0_{(i,0,s)}(\gamma)\right]\\
	&= z_\mu^{2m-1}\#X^0_{(0,0,s)}(\gamma)+\sum_{i=1}^{m-1} z_\mu^{2m-2i-1}\left[\#X^0_{(-i,0,s)}(\gamma)+\#X^0_{(i,0,s)}(\gamma)\right]\\
	&= q^{-\frac{\ell(\mu)}{2}}\left[((-q)^{2m-1}-1)\frac{q-1}{q+1}(1)
			+2\sum_{i=1}^{m-1} ((-q)^{2m-2i-1}-1)\frac{q-1}{q+1} \left(q^{i}\right)\right]\\
		&= q^{-\frac{\ell(\mu)}{2}}\frac{q-1}{q+1}\left[(q^{2m-1}+1)-2\sum_{i=0}^{m-1} (q^{2m-2i-1}+1)\left(q^{i}\right)\right]\\
		&= q^{-\frac{\ell(\mu)}{2}}\frac{q-1}{q+1}\left[(q^{2m-1}+1)-2\sum_{i=0}^{m-1} (q^{2m-i-1}+q^{i})\right].\\
\intertext{In the summation, the $q^{i}$ term runs from $q^0$ to $q^{m-1}$, while the $q^{2m-i-1}$ term runs from $q^{m}$ to $q^{2m-1}$. Adding them all together gives us}
&= q^{-\frac{\ell(\mu)}{2}}\frac{q-1}{q+1}\left[(q^{2m-1}+1)-2\left(\frac{q^{2m}-1}{q-1}\right)\right]\\
&= q^{-\frac{\ell(\mu)}{2}}\frac{1}{q+1}\left[(q^{2m}+q-q^{2m-1}-1)-2q^{2m}+2\right]\\
&= q^{-\frac{\ell(\mu)}{2}}\frac{1}{q+1}\left[-q^{2m}+q-q^{2m-1}+1\right]\\
&= q^{-\frac{\ell(\mu)}{2}}\left(1-q^{2m-1}\right).
\end{align*}

When $\val_F\det(\gamma)$ is even so is $\ell(\mu)$; let $\ell(\mu)=2m$. Lemma \ref{ramcount} (\ref{ramcounteven}) tells us that all the non-empty \xwgz\ have $w$ of odd or zero length. That lemma,  lemma \ref{berncoefs}, and corollaries \ref{berncor} and \ref{ramreduction} imply that
\begin{align*}
\orb(z_{\mu}) &= z_\mu^{\ell(\mu)}\#X^0_{(0,0,s)}(\gamma)+\sum_{1\leq i\leq \frac{\ell(\mu)+1}{2}} z_\mu^{\ell(\mu)-2i+1}\left[\#X^0_{(-i,1,s)}(\gamma)+\#X^0_{(i-1,1,s)}(\gamma)\right]\\
	&= z_\mu^{2m}\#X^0_{(0,0,s)}(\gamma)+\sum_{i=1}^m z_\mu^{2m-2i+1}\left[\#X^0_{(-i,1,s)}(\gamma)+\#X^0_{(i-1,1,s)}(\gamma)\right]\\
 	&= q^{-\frac{\ell(\mu)}{2}}\frac{q-1}{q+1}\left[((-q)^{2m}-1)\frac{2q^{a+1}-q-1}{q-1}+\sum_{i=1}^m ((-q)^{2m-2i+1}-1)2q^{a+i}\right]\\
 	&= q^{-\frac{\ell(\mu)}{2}}\frac{q-1}{q+1}\left[(q^{2m}-1)\frac{2q^{a+1}-q-1}{q-1}-2q^a\sum_{i=1}^m (q^{2m-i+1}+q^i)\right]\\
 	&= q^{-\frac{\ell(\mu)}{2}}\frac{q-1}{q+1}\left[(q^{2m}-1)\frac{2q^{a+1}-q-1}{q-1}-2q^a(q)\frac{q^{2m}-1}{q-1}\right]\\
 	&= q^{-\frac{\ell(\mu)}{2}}\frac{q^{2m}-1}{q+1}\left[2q^{a+1}-q-1-2q^{a+1}\right]\\
 	&= q^{-\frac{\ell(\mu)}{2}}(1-q^{2m}).
\end{align*}
\end{proof}




\section{Twisted Orbital Integrals}\label{twistedchapt}

The structure of this section will, for the most part, parallel that of chapter \ref{orbitalchapt}. In this chapter, $\delta\in G_E$ will be such that $N(\delta)$ is equal to a semi-simple regular elliptic $\gamma$. 

\pagebreak[4]

\subsection{Computing the twisted orbital integral}\label{twisted}

\subsubsection{$F'$ unramified over $F$}
Recall from corollary \ref{unramreduction} that
$$\twisted(1_{I_EwI_E}) = \frac{1}{\meas_{T}(K_\gamma)}\left(\# X_w^0(\delta\sigma) + \# X_{\overline{w}}^0(\delta\sigma)\right).$$ 

The following should be compared to theorem \ref{flunramorbit}.
\begin{theorem}\label{flunram}
Let $F'/F$ be unramified, normalize $d_{g_T}$ such that $\meas_{dg_T}(T_{\ints})=1$, let $\delta\in G_E$ such that $N(\delta)=\gamma$, $\gamma$ a semi-simple regular elliptic element of $G$. Let $\Delta(\gamma)=q^{-a}$ and let $z_\mu\in\zhie$, $\mu\in X_*(S_E)$.  Then
$$\twisted(z_{\mu})=\begin{cases} 
			0, & \val\det(\delta)\neq \size(\mu)\\
			2(q^a-1)\left(\frac{q+1}{q-1}\right), & \ell(\mu)=0\\
			2q^{-f\ell(\mu)/2}(1-(-q)^{f\ell(\mu)}), & \ell(\mu)\neq 0.\end{cases}
$$
\end{theorem}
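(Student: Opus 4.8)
The plan is to follow the blueprint already set by the proof of Theorem~\ref{flunramorbit}, replacing the counting lemma for $\gamma$ (Lemma~\ref{unramcount}) with its twisted analogue (Lemma~\ref{twunramcount}) and keeping the Bernstein expansion from Corollary~\ref{berncor} and the reduction in Corollary~\ref{unramreduction} unchanged. First I would observe that, since $\siz(z_\mu)=\siz(\mu)$ and every non-empty $X^0_w(\delta\sigma)$ has $w$ of size $s=\val_E\det(\delta)$, the integral vanishes unless $\val\det(\delta)=\siz(\mu)$, giving the first case; so assume this from now on. The $\ell(\mu)=0$ case is immediate: the only admissible $w$ is $(0,0,s)$, which equals its own $\overline{w}$, and Lemma~\ref{twunramcount} gives $\#X^0_{(0,0,s)}(\delta\sigma)=\left(\frac{q+1}{q-1}\right)(q^a-1)$, so Corollary~\ref{unramreduction} yields $2(q^a-1)\left(\frac{q+1}{q-1}\right)$.

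For $\ell(\mu)\neq0$ I would expand $z_\mu$ via Corollary~\ref{berncor}. Exactly as in the proof of Theorem~\ref{flunramorbit}, the even-length admissible $w$ (other than the length-zero term) have $b=0$ and hence empty $X^0_w(\delta\sigma)$ by Lemma~\ref{twunramcount}, so only the length-zero term and the odd-length terms survive. Using $\overline{(-i,1,s)}=(i-1,1,s)$ and Lemma~\ref{sizelemma} (so $\#X^1_w=\#X^0_{\overline{w}}$), Corollary~\ref{unramreduction} collapses the sum to
\begin{align*}
\twisted(z_\mu) &= 2\Big[z_\mu^{\ell(\mu)}\#X^0_{(0,0,s)}(\delta\sigma)\\
&\qquad +\sum_{1\leq i\leq\frac{\ell(\mu)+1}{2}} z_\mu^{\ell(\mu)-2i+1}\big(\#X^0_{(-i,1,s)}(\delta\sigma)+\#X^0_{(i-1,1,s)}(\delta\sigma)\big)\Big].
\end{align*}
The key point, available from Lemma~\ref{twunramcount}\eqref{twunramcounttwo}, is that for each $i$ the pair $\#X^0_{(-i,1,s)}(\delta\sigma)+\#X^0_{(i-1,1,s)}(\delta\sigma)$ is a single closed-form quantity, namely $q^{fi}\left[(1-q^{1-f})\frac{q^{a-1}-1}{q-1}+(q+1)q^{a-1}\right]$ when $a+i$ has the right parity and $q^{fi}(1-q^{1-f})\frac{q^a-1}{q-1}$ otherwise — but, crucially, the \emph{sum} over the two members of the pair is always the same regardless of which case of the lemma ($\gamma$ split or not, eigenvalue difference divisible by $4$ or not) one is in, because the two formulas just swap roles. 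So the pair-sum is a fixed function of $i$, $a$, $f$, independent of the fine case distinctions, and I would record this simplified expression once.

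The remaining work is the geometric-series bookkeeping, done separately for $\ell(\mu)$ odd and $\ell(\mu)$ even, substituting $z_\mu^r = q^{-f\ell(\mu)/2}((-q)^r-1)\frac{q-1}{q+1}$ from Theorem~\ref{berncoefs} (note the residue cardinality over $E$ is $q^f$, so the normalizing power is $q^{-f\ell(\mu)/2}$ and the alternating geometric ratios involve $(-q)$, not $(-q)^f$, since length is computed in $\wae$ over $E$ where the relevant prime is still $q$ — I'd double-check this normalization point carefully). This is the step I expect to be the main obstacle: not conceptually, but because the $f$-dependent factors $q^{fi}$ and $(1-q^{1-f})$ must telescope cleanly against the $(-q)^r$ coefficients to collapse to $2q^{-f\ell(\mu)/2}(1-(-q)^{f\ell(\mu)})$, and verifying that the $a$-dependence and the $(1-q^{1-f})$ terms cancel exactly — as they must, since the answer is independent of $a$ — requires care with the parity-split sums just as in Theorem~\ref{flunramorbit}. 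I would handle the odd case $\ell(\mu)=2m-1$ first, then the even case $\ell(\mu)=2m$, mirroring the two displayed computations in that earlier proof, and check the final collapse against the $E=F$ specialization (where it must reduce to Theorem~\ref{flunramorbit}) as a sanity check.
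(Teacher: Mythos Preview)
Your overall strategy is right, but there is a genuine gap: you have collapsed the case analysis too aggressively. Lemma~\ref{twunramcount} has \emph{two} parts, and your claim that ``the even-length admissible $w$ (other than the length-zero term) have $b=0$ and hence empty $X^0_w(\delta\sigma)$'' only holds in part~(\ref{twunramcounttwo}). In part~(\ref{twunramcountone}) --- when $T$ splits over $E$ and $s=\val_E\det(\delta)$ is odd --- the situation is reversed: the non-empty $X^0_w(\delta\sigma)$ are exactly those with $w=(\pm r,0,s)$, i.e.\ the \emph{even}-length terms, and the odd-length terms vanish. The paper handles this case with a separate computation (and notes that $f$ must then be even, which is what makes $1-q^{f(2m-1)}=1-(-q)^{f\ell(\mu)}$ come out correctly at the end). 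Your displayed formula for $\twisted(z_\mu)$ is simply wrong in this case, and the pair-sum observation you recorded from~(\ref{twunramcounttwo}) does not apply. You need to treat this case on its own; once you do, it is actually the easiest of the four, since $\#X^0_{(\pm r,0,s)}(\delta\sigma)=q^{fr}$ has no $a$-dependence at all.

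Separately, your normalization remark about the Bernstein coefficients is backwards. Since $z_\mu\in\zhie$, Theorem~\ref{berncoefs} applies with the residue cardinality of $E$, which is $q^f$; so $z_\mu^r = q^{-f\ell(\mu)/2}\big((-q^f)^r-1\big)\frac{q^f-1}{q^f+1}$, and the alternating ratios genuinely involve $(-q^f)$, not $(-q)$. The paper's computations use $(-q^f)^{2m-1}$, $\frac{q^f-1}{q^f+1}$, etc., throughout. With the wrong base $-q$ the telescoping you are hoping for will not happen. Once you fix both of these points, the remaining three cases (those covered by~(\ref{twunramcounttwo})) do proceed essentially as you outlined, with the pair-sum simplification the paper records as its equation~(\ref{longshort}).
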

\begin{proof}
We may assume without loss of generality that $\gamma$ is in standard form.
The reader may find it helpful to refer back to lemma \ref{twunramcount} and corollary \ref{berncor}. It is clear from them that $\twisted(z_{\mu})$ is zero when $\val_E\det(\delta)\neq \size(\mu)$ and that $\twisted(z_\mu)=2(q^a-1)\left(\frac{q+1}{q-1}\right)$ when $\ell(\mu)=0$.

Suppose for the rest of the proof $\ell(\mu)\neq 0$ and let $s=\val_E\det(\delta)=\size(\mu)$. Following lemma \ref{twunramcount}, there are four cases: the cases where $T$ splits over $E$ and $s$ is odd or even, and the cases where $T$ does not split over $E$, $s$ odd or even. 

\paragraph{\textbf{$T$ split over $E$, odd $s$.}}Suppose that $T$ splits over $E$, so that $f$ must be even, and $s$ is odd. The only non-empty $\xwdz$ are those with even length $w$.  Corollaries \ref{berncor} and \ref{unramreduction} tell us that
\begin{align*}
\twisted(z_\mu) &= z_{\mu}^{\ell(\mu)}\left[\#X^0_{(0,0,s)}(\delsig)+\#X^0_{(0,0,s)}(\delsig)\right]\\
		&\qquad\qquad+\sum_{1\leq i\leq \frac{\ell(\mu)-1}{2}} 
 			z_{\mu}^{\ell(\mu)-2i} \left[\left(
 			\#X^0_{(-i,0,s)}(\delsig)+\#X^0_{\overline{(-i,0,s)}}(\delsig)
 			\right)\right.\\
			&\qquad\qquad\qquad\qquad \left.+ \left(\#X_{(i,0,s)}(\delsig)+
 			\#X^0_{\overline{(i,0,s)}}(\delsig)
 			\right)\right]\\
 	&= 2\left[z_{\mu}^{\ell(\mu)}\left(\#X^0_{(0,0,s)}(\delsig)\right)\right.\\
		&\qquad\qquad+\sum_{1\leq i\leq \frac{\ell(\mu)-1}{2}} \left.
			z_{\mu}^{\ell(\mu)-2i} \left(
			\#X^0_{(-i,0,s)}(\delsig) + \#X^0_{(i,0,s)}(\delsig)\right)\right],
\end{align*}
where in the summation index we have taken into account that $\ell(\mu)$ is odd (since $\siz(\mu)$ is). If we let $\ell(\mu)=2m-1$, then we may rewrite the above as 
$$2\left[z_{\mu}^{2m-1}\left(\#X_{(0,0,s)}(\delsig)\right)
		+\sum_{i=1}^{m-1} 
			z_{\mu}^{2m-2i-1} \left(
			\#X_{(-i,0,s)}(\delsig) + \#X_{(i,0,s)}(\delsig)\right)\right].$$
Now we substitute in the values of the \xwdz\ from lemma \ref{twunramcount} (\ref{twunramcountone}) and the $z_\mu^d$ from theorem \ref{berncoefs}, yielding
\begin{align*}
\twisted(z_\mu) &= 2q^{-f\ell(\mu)/2}\left[((-q^f)^{2m-1}-1)\frac{q^f-1}{q^f+1}(1)\right.\\
			&\qquad\qquad\qquad\qquad+2\sum_{i=1}^{m-1} \left.((-q^f)^{2m-2i-1}-1)\frac{q^f-1}{q^f+1} \left(q^{fi}\right)\right]\\
		&= 2q^{-f\ell(\mu)/2}\left(\frac{q^f-1}{q^f+1}\right)\left[(q^{f(2m-1)}+1)-2\sum_{i=0}^{m-1} (q^{f(2m-2i-1)}+1)\left(q^{fi}\right)\right]\\
		&= 2q^{-f\ell(\mu)/2}\left(\frac{q^f-1}{q^f+1}\right)\left[(q^{f(2m-1)}+1)-2\sum_{i=0}^{m-1} (q^{f(2m-i-1)}+q^{fi})\right].\\
\intertext{In the summation, the $q^{fi}$ term runs from $q^0$ to $(q^f)^{m-1}$, while the $q^{f(2m-i-1)}$ term runs from $(q^f)^{m}$ to $(q^f)^{2m-1}$. Adding them all together gives us}
&= 2q^{-f\ell(\mu)/2}\left(\frac{q^f-1}{q^f+1}\right)\left[(q^{f(2m-1)}+1)-2\left(\frac{q^{f(2m)}-1}{q^{f}-1}\right)\right]\\
&= 2q^{-f\ell(\mu)/2}\left(\frac{1}{q^f+1}\right)\left[(q^{f(2m)}+q^f-q^{f(2m-1)}-1)-2q^{f(2m)}+2\right]\\
&= 2q^{-f\ell(\mu)/2}\left(\frac{1}{q^f+1}\right)\left[-q^{f(2m)}+q^f-q^{f(2m-1)}+1\right]\\
&= 2q^{-f\ell(\mu)/2}\left(1-q^{f(2m-1)}\right)\\
&= 2q^{-f\ell(\mu)/2}\left(1-(-q)^{f\ell(\mu)}\right),
\end{align*}
where the last equality follows because $f$ is even. This dispenses with the case when $T$ splits over $E$ and $s$ is odd.

There are two simplifications common to all three of our remaining cases. First, in each of them the only non-empty $\xwdz$ are those with $w$ of odd or zero length, following lemma \ref{twunramcount} (\ref{twunramcounttwo}). Second, in each case the sum over the odd length terms may be simplified as 
\begin{align}
\left(\#X^0_{(-i,1,s)}(\delsig)\right.&+\left.\#X^0_{\overline{(-i,1,s)}}(\delsig)\right)+ \left(\#X^0_{(i-1,1,s)}(\delsig)+\#X^0_{\overline{(i-1,1,s)}}(\delsig)\right)\notag\\
 &= 2\left(\#X^0_{(-i,1,s)}(\delsig)+\#X^0_{(i-1,1,s)}(\delsig)\right)\notag\\
&= 2q^{fi}\left[(1-q^{1-f})\left(\frac{q^{a-1}-1}{q-1}+\frac{q^a-1}{q-1}\right)+(q+1)q^{a-1}\right]\notag\\
&= 2q^{fi}\left[(1-q^{1-f})\frac{q^a+q^{a-1}-2}{q-1}+(q+1)q^{a-1}\right]\label{longshort}.
\end{align}
The last equality follows because $a$ is fixed, and, for any given value of $i$, $a+i$ is either even or odd, and in either case we add one long term  and one short term from \ref{twunramcount} (\ref{twunramcounttwo}). 

\paragraph{\textbf{$T$ split over $E$, even $s$.}}Now suppose that $T$ splits over $E$, so that $f$ is again even, and $s$ is even, so that $\ell(\mu)$ is as well. Set $\ell(\mu)=2m$. Then (\ref{longshort}), theorem \ref{berncoefs} and corollary \ref{berncor} show that
\begin{align*}
\twisted(z_\mu) &= 2z_\mu^{2m}\#X^0_{(0,0,s)}+ 2\sum_{1\leq i\leq m}z_\mu^{2m-2i+1}\left[\#X^0_{(-i,1,s)}+\#X^0_{(i-1,1,s)}\right]\\
		&= 2q^{-f\frac{\ell(\mu)}{2}} \left[((-q^f)^{2m}-1)\frac{q^f-1}{q^f+1}\frac{q+1}{q-1}(q^a-1)\right.\\
		&\left.\qquad\qquad+ \sum_{i=1}^m ((-q^f)^{2m-2i+1}-1)\frac{q^f-1}{q^f+1}q^{fi}\right.\\
		&\qquad\qquad\qquad\cdot\left.\left((1-q^{1-f})\frac{q^a+q^{a-1}-2}{q-1}+(q+1)q^{a-1}\right)\right]\\
		&= \left.2q^{-f\frac{\ell(\mu)}{2}} \frac{q^f-1}{(q^f+1)(q-1)}\right[(q^{f(2m)}-1)(q+1)(q^a-1)\\
		&\left.\qquad\qquad\qquad- \left((1-q^{1-f})(q^a+q^{a-1}-2)+(q^2-1)q^{a-1}\right)\right.\\
		&\qquad\qquad\qquad \cdot\left.\sum_{i=1}^m(q^{f(2m-i+1)}+q^{fi})\right].\\
\intertext{Simplifying the summation gives}
		&= \left.2q^{-f\frac{\ell(\mu)}{2}} \frac{q^f-1}{(q^f+1)(q-1)}\right[(q^{f(2m)}-1)(q+1)(q^a-1)\\
		&\left.\qquad\qquad\qquad- \left((1-q^{1-f})(q^a+q^{a-1}-2)+(q^2-1)q^{a-1}\right)q^f\frac{q^{f(2m)}-1}{q^f-1}\right]\\
		&= 2q^{-f\frac{\ell(\mu)}{2}} \frac{q^{f(2m)}-1}{(q^f+1)(q-1)}\left[(q^{f}-1)(q+1)(q^a-1)\right.\\
		&\qquad\qquad\qquad \left.- \left((q^{f}-q)(q^{a-1}(q+1)-2)+q^f(q^2-1)q^{a-1}\right)\right]\\
		&= 2q^{-f\frac{\ell(\mu)}{2}} \frac{q^{f(2m)}-1}{(q^f+1)(q-1)}\left[q^{a-1}(q+1)\bigg(q(q^f-1)-(q^f-q)-q^f(q-1)\bigg)\right.\\
		&\qquad\qquad\qquad\qquad\qquad\qquad\left.-(q^f-1)(q+1)+2(q^f-q)\right]\\
		&= 2q^{-f\frac{\ell(\mu)}{2}} \frac{q^{f(2m)}-1}{(q^f+1)(q-1)}\left[-q^{f+1}+q^f-q+1\right]\\
		&= 2q^{-f\frac{\ell(\mu)}{2}} (1-q^{f(2m)})\\
		&= 2q^{-f\frac{\ell(\mu)}{2}} (1-(-q)^{f\ell(\mu)}).
\end{align*}

\paragraph{\textbf{$T$ not split over $E$, odd $s$.}}Suppose that $T$ does not split over $E$, so that $f$ is odd, and suppose $s$ is odd. Then $\ell(\mu)$ is as well, and we may write $\ell(\mu)=2m-1$. In this case, using (\ref{longshort}) and theorem \ref{berncoefs} yields 
\begin{align*}
\twisted(z_\mu) &= 2z_\mu^{2m-1}\frac{q+1}{q-1}(q^a-1)\\
		&\qquad+ 2\sum_{1\leq i\leq m}
		z_\mu^{2m-2i}\left[q^{fi}\left((1-q^{1-f})\frac{q^a+q^{a-1}-2}{q-1}+(q+1)q^{a-1}\right)\right]\\
		&= 2q^{-f\frac{\ell(\mu)}{2}} \left[((-q^f)^{2m-1}-1)\frac{q^f-1}{q^f+1}\frac{q+1}{q-1}(q^a-1)\right.\\
		&\qquad+\left((1-q^{1-f})\frac{q^a+q^{a-1}-2}{q-1}+(q+1)q^{a-1}\right)\\
		&\left.\qquad\qquad\qquad\qquad\qquad\cdot \left(q^{f(m)}+\sum_{i=1}^{m-1} ((-q^f)^{2m-2i}-1)\frac{q^f-1}{q^f+1}q^{fi}\right)\right]\\
		&= 2q^{-f\frac{\ell(\mu)}{2}}\frac{q^f-1}{(q^f+1)(q-1)}
		\left[-(q^{f(2m-1)}+1)(q+1)(q^a-1)\right.\\
		&\left.\qquad+\left((1-q^{1-f})(q^a+q^{a-1}-2)+(q^2-1)q^{a-1}\right)\right.\\
		&\qquad\qquad\qquad\qquad\qquad \left.\cdot\left(q^{f(m)}\frac{q^f+1}{q^f-1}+\sum_{i=1}^{m-1} (q^{f(2m-i)}-q^{fi})\right)\right]\\
\intertext{Within the summation, the $q^{f(2m-i)}$ terms run from $q^{f(m+1)}$ to $q^{f(2m-1)}$, while the $q^{fi}$ terms run from $q^f$ to $q^{f(m-1)}$. Factoring $q^{f(m+1)}$ from the first terms and $q^f$ from the second yields}
		&= 2q^{-f\frac{\ell(\mu)}{2}}\frac{q^f-1}{(q^f+1)(q-1)} \left[-(q^{f(2m-1)}+1)(q+1)(q^a-1)\right.\\
		&\left.\qquad+\left((1-q^{1-f})(q^a+q^{a-1}-2)+(q^2-1)q^{a-1}\right)\right.\\
		&\qquad\qquad\qquad\qquad\left. \cdot\left(q^{f(m)}\frac{q^f+1}{q^f-1}+(q^{f(m+1)}-q^f)\frac{q^{f(m-1)}-1}{q^f-1}\right)\right]\\
		&= 2q^{-f\frac{\ell(\mu)}{2}}\frac{1}{(q^f+1)(q-1)} \left[-(q^{f(2m-1)}+1)(q+1)(q^a-1)(q^f-1)\right.\\
		&\left.\qquad+\left((1-q^{1-f})(q^a+q^{a-1}-2)+(q^2-1)q^{a-1}\right)\right.\\
		&\qquad\qquad\qquad\qquad\left.\cdot\left(q^{f(m)}(q^f+1)+(q^{f(m+1)}-q^f)(q^{f(m-1)}-1)\right)\right]\\
		&= 2q^{-f\frac{\ell(\mu)}{2}}\frac{1}{(q^f+1)(q-1)} \left[-(q^{f(2m-1)}+1)(q+1)(q^a-1)(q^f-1)\right.\\
		&\left.\qquad+\left((1-q^{1-f})(q^a+q^{a-1}-2)+(q^2-1)q^{a-1}\right) q^f\left(q^{f(2m-1)}+1\right)\right]\\
		&= 2q^{-f\frac{\ell(\mu)}{2}}\frac{q^{f(2m-1)}+1}{(q^f+1)(q-1)} \left[-(q+1)(q^a-1)(q^f-1)\right.\\
		&\qquad\qquad\qquad\qquad\qquad\qquad\left.+\left((q^f-q)(q^a+q^{a-1}-2)+(q^2-1)q^fq^{a-1}\right) \right]\\
		&= 2q^{-f\frac{\ell(\mu)}{2}}\frac{q^{f(2m-1)}+1}{(q^f+1)(q-1)} \left[q^{a-1}(q+1)\left((q^f-q)+(q-1)q^f-q(q^f-1) \right)\right.\\ &\left.\qquad\qquad\qquad\qquad\qquad\qquad+\left((q+1)(q^f-1)-2(q^f-q)\right)\right]\\
		&= 2q^{-f\frac{\ell(\mu)}{2}}\frac{q^{f(2m-1)}+1}{(q^f+1)(q-1)}\left[q^{f+1}-q^f+q-1\right]\\	 
		&= 2q^{-f\frac{\ell(\mu)}{2}}(1-(-q)^{f\ell(\mu)}),
\end{align*}
since both $f$ and $\ell(\mu)$ are odd.

\paragraph{\textbf{$T$ not split in $E$, even $s$.}} Finally, consider the case $T$ not split in $E$, so that $f$ is again odd, and $s$ (and therefore $\ell(\mu)$) is even. Set $\ell(\mu)=2m$. Just as above, (\ref{longshort}) and theorem \ref{berncoefs} gives us
\begin{align*}
\twisted(z_\mu) &= 2z_\mu^{2m}\#X^0_{(0,0,s)}(\delsig)+ 2\sum_{1\leq i\leq m}z_\mu^{2m-2i+1}\left[\#X^0_{(-i,1,s)}(\delsig)+\#X^0_{(i-1,1,s)}(\delsig)\right]\\
		&= 2q^{-f\frac{\ell(\mu)}{2}} \left[((-q^f)^{2m}-1)\frac{q^f-1}{q^f+1}\frac{q+1}{q-1}(q^a-1)\right.\\
		&\left.\qquad\qquad\qquad+ \sum_{i=1}^m ((-q^f)^{2m-2i+1}-1)\frac{q^f-1}{q^f+1}q^{fi}\right.\\
		&\qquad\qquad\qquad\left.\cdot\left((1-q^{1-f})\frac{q^a+q^{a-1}-2}{q-1}+(q+1)q^{a-1}\right)\right]
\end{align*}
Comparing this with the sum for $T$ splitting in $E$, $\ell(\mu)$ even shows that the current sum is exactly the same; for even though $f$is odd in this case it occurs in the sum as $-(q^f)$, and it does not determine any signs.
\end{proof}

\subsubsection{$F'$ ramified over $F$}
Recall from corollary \ref{ramreduction} that
$$\twisted(1_{IwI}) = \frac{1}{\meas_{T}(T_{\ints})}\# X_w^0(\delsig).$$
The following should be compared with theorem \ref{flramorbit}.
\begin{theorem}\label{flram}
Let $F'/F$ be ramified, normalize $dg_T$ so that $\meas_{dg_T}(T_{\ints})=1$, fix a semi-simple regular elliptic $\gamma\in G$, let $d_T$ be the distance from the barycenter of $e_T$ to $\apt(EF')$, let $\Delta(\gamma)=q^{-a+\-\frac{d_T+1}{2}}$, let $\mu\in X_*(S_E)$, and let $z_\mu\in \zhie$. Then 
$$\twisted(z_{\mu}))=\begin{cases}
			0, & \val\det(\delta)\neq \size(\mu)\\
			\frac{2q^{a+1}-q-1}{q-1}, & \ell(\mu)=0\\
			q^{-f\ell(\mu)/2}(1-q^{f\ell(\mu)}), & \ell(\mu)\neq 0.\end{cases}$$
\end{theorem}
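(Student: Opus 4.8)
The plan is to mirror the proof of Theorem \ref{flramorbit}, turning the integral into a weighted count of the sets $X_w^0(\delsig)$ and then feeding in Lemma \ref{twramcount}. First I would invoke Corollary \ref{ramreduction}: with the normalization $\meas_{dg_T}(T_{\ints})=1$ this gives $\twisted(1_{I_EwI_E})=\#X_w^0(\delsig)$ for each $w\in\wae$, so by linearity together with the expansion of $z_\mu$ in Corollary \ref{berncor},
\begin{equation*}
\twisted(z_\mu)=\sum_{w}z_\mu(w)\,\#X_w^0(\delsig),
\end{equation*}
the sum running over the $\mu$-admissible $w$. Since $X_w^0(\delsig)$ is empty unless the size of $w$ equals $\vdd$, while every $\mu$-admissible $w$ has size $\siz(\mu)$, the integral vanishes unless $\vdd=\siz(\mu)$; from now on set $s=\vdd=\siz(\mu)$, and recall that then $s$ and $\ell(\mu)$ have the same parity.

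The case $\ell(\mu)=0$ is immediate: $s$ is even, the only surviving term is $w=(0,0,s)$ with coefficient $z_\mu^{0}=1$, and $\#X_{(0,0,s)}^0(\delsig)=\tfrac{2q^{a+1}-q-1}{q-1}$ by Lemma \ref{twramcount}(\ref{twramcounttwo}). For $\ell(\mu)\neq0$ I would split on the parity of $s$. When $s$ (hence $\ell(\mu)$) is odd, Lemma \ref{twramcount}(\ref{twramcountone}) shows that only the even-length admissible $w$ contribute --- the central edge $(0,0,s)$ and the symmetric pairs $(\pm j,0,s)$, each with $X^0$ of cardinality $q^{fj}$. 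Writing $\ell(\mu)=2m-1$, substituting the Bernstein coefficients of Theorem \ref{berncoefs} (with $q$ replaced throughout by $q^f$, the residual cardinality of $E$), and collecting the geometric sum $\sum_{j=1}^{m-1}(q^{f(2m-1-j)}+q^{fj})=q^f\tfrac{q^{f(2m-2)}-1}{q^f-1}$, the calculation is formally identical to the odd case of Theorem \ref{flramorbit} and produces $q^{-f\ell(\mu)/2}(1-q^{f\ell(\mu)})$.

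The even case is where the real work lies. When $s$ (hence $\ell(\mu)=2m$) is even, Lemma \ref{twramcount}(\ref{twramcounttwo}) leaves the central edge $(0,0,s)$, with $X^0$ of size $\tfrac{2q^{a+1}-q-1}{q-1}$, and the odd-length edges $(-i,1,s)$, $(i-1,1,s)$, each with $X^0$ of size $q^{fi}\bigl[q^a+\tfrac{q^a-1}{q-1}(1-q^{1-f})\bigr]$. Substituting the coefficients of Theorem \ref{berncoefs} and summing $\sum_{i=1}^{m}(q^{f(2m-i+1)}+q^{fi})=q^f\tfrac{q^{2fm}-1}{q^f-1}$ reduces the assertion to the elementary identity
\begin{equation*}
(q^f-1)\,\frac{2q^{a+1}-q-1}{q-1}-2q^f\Bigl[q^a+\frac{q^a-1}{q-1}(1-q^{1-f})\Bigr]=-(q^f+1),
\end{equation*}
verified by clearing denominators; then $\twisted(z_\mu)=q^{-f\ell(\mu)/2}(1-q^{f\ell(\mu)})$ here too. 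I expect the main obstacle to be exactly this identity: the cancellation of all the $a$-dependent terms. It is not visible a priori --- both counts in Lemma \ref{twramcount}(\ref{twramcounttwo}) carry $a$ --- and it is the concrete manifestation of the ``independence of the conjugacy class'' flagged in the introduction; arranging the bookkeeping of the geometric sums so that this cancellation surfaces is the one genuinely delicate point.
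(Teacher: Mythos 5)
Your proposal is correct and follows the paper's own argument in all essentials: reduce via Corollary~\ref{ramreduction} and Corollary~\ref{berncor} to a weighted count over the $\mu$-admissible $w$, observe the vanishing when $\vdd\neq\siz(\mu)$, dispose of $\ell(\mu)=0$ directly, split on the parity of $s$, substitute the cardinalities of Lemma~\ref{twramcount} and the coefficients of Theorem~\ref{berncoefs} (with residual cardinality $q^f$), and collapse the geometric sums. Your closed-form identity for the even case, $(q^f-1)\tfrac{2q^{a+1}-q-1}{q-1}-2q^f\bigl[q^a+\tfrac{q^a-1}{q-1}(1-q^{1-f})\bigr]=-(q^f+1)$, is exactly the cancellation the paper arrives at (its bracket $-q^{f+1}+q^f-q+1$ is $-(q^f+1)(q-1)$), and your diagnosis that the $a$-independence of the answer is the genuinely delicate point is accurate and matches what the paper's computation reveals.
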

\begin{proof}
The integral is clearly zero if $\val_E\det(\delta)\neq\size(\mu)$. Suppose for the remainder of the proof that $s=\val_E\det(\delta)=\siz(\mu)$. When $\ell(\mu)=0$, lemmas \ref{berncoefs} and \ref{twramcount} (\ref{twramcounttwo}) immediately give us the result in the case $\ell(\mu)=0$, so suppose $\ell(\mu)\neq 0$. 

\paragraph{\textbf{Odd $s$.}}Suppose $s$ is odd, and let $\ell(\mu)=2m-1$. 
Lemma \ref{twramcount} (\ref{twramcountone}) tells us that the only non-empty \xwdz\ are those with $w$ of even length. Then lemma \ref{berncoefs} and corollary \ref{berncor} imply that 
\begin{align*}
\twisted(z_\mu) &= z_\mu^{\ell(\mu)}\#X^0_{(0,0,s)}(\delsig)+\sum_{1\leq j\leq \frac{\ell(\mu)}{2}} z_\mu^{\ell(\mu)-2j}\left[\#X^0_{(-j,0,s)}(\delsig)+\#X^0_{(j,0,s)}(\delsig)\right]\\
	&= z_\mu^{2m-1}\#X^0_{(0,0,s)}(\delsig)+\sum_{i=1}^{m-1} z_\mu^{2m-2j-1}\left[\#X^0_{(-j,0,s)}(\delsig)+\#X^0_{(j,0,s)}(\delsig)\right]
\end{align*}
This is exactly one half the formula found at the beginning of the proof of theorem \ref{flunram}; indeed, the sum is over the same index, and all of the coefficients $z_m^i$ and $\#X^0_w(\delsig)$ are the same. We may follow the reasoning from that case to its penultimate conclusion, that the twisted orbital integral is equal to $q^{-f\ell(\mu)/2}\left(1-q^{f(2m-1)}\right)$, as desired. The final line of the proof of \ref{flunram} may or may not hold in this case, as we have not made any assumptions about the parity of $f$ in this proof.

\paragraph{\textbf{Even $s$.}} Finally, suppose that $s$ is even, and let $\ell(\mu)=2m$. Lemma \ref{twramcount} (\ref{twramcountone}) tells us that the only non-empty \xwdz\ are those with $w$ of odd or zero length. Then lemma \ref{berncoefs} and corollary \ref{berncor} yield
\begin{align*}
\twisted(z_\mu) &= z_\mu^{\ell(\mu)}\#X^0_{(0,0,s)}(\delsig)+\sum_{1\leq i\leq \frac{\ell(\mu)+1}{2}} z_\mu^{\ell(\mu)-2i+1}\left[\#X^0_{(-i,1,s)}(\delsig)+\#X^0_{(i-1,1,s)}(\delsig)\right]\\
	&= z_\mu^{2m}\#X^0_{(0,0,s)}(\delsig)+\sum_{i=1}^m z_\mu^{2m-2i+1}\left[\#X^0_{(-i,1,s)}(\delsig)+\#X^0_{(i-1,1,s)}(\delsig)\right]\\
	&= q^{-f\frac{\ell(\mu)}{2}}\left[((-q^f)^{2m}-1)\left(\frac{q^f-1}{q^f+1}\right)\left(\frac{2q^{a+1}-q-1}{q-1}\right)\right.\\ &\qquad\qquad+\left.2\sum_{i=1}^m ((-q^f)^{2m-2i+1}-1)\frac{q^f-1}{q^f+1}q^{fi}\left(q^a+\frac{q^{a}-1}{q-1}(1-q^{1-f})\right)\right]\\
	&= q^{-f\frac{\ell(\mu)}{2}}\frac{q^f-1}{(q^f+1)(q-1)}\left[(q^{f(2m)}-1)(2q^{a+1}-q-1)\right.\\ &\qquad\qquad\qquad -\left.2\left(q^a(q-1)+(q^{a}-1)(1-q^{1-f})\right)\sum_{i=1}^m (q^{f(2m-i+1)}+q^{fi})\right]\\
	&= q^{-f\frac{\ell(\mu)}{2}}\frac{q^f-1}{(q^f+1)(q-1)}\left[(q^{f(2m)}-1)(2q^{a+1}-q-1)\right.\\ &\qquad\qquad\qquad -\left.2\left(q^a(q-1)+(q^{a}-1)(1-q^{1-f})\right)q^f\frac{q^{f(2m)}-1}{q^f-1}\right]\\
	&= q^{-f\frac{\ell(\mu)}{2}}\frac{q^{f(2m)}-1}{(q^f+1)(q-1)}\left[(q^f-1)(2q^{a+1}-q-1)\right.\\
	&\qquad\qquad\left.-2\left(q^aq^f(q-1)+(q^{a}-1)(q^f-q)\right)\right]\\
	&= q^{-f\frac{\ell(\mu)}{2}}\frac{q^{f(2m)}-1}{(q^f+1)(q-1)}\left[-(q^f-1)(q+1)\right.\\
	&\qquad\qquad\left.+2(q^f-q) +2q^a\left(q(q^f-1) -q^f(q-1)-(q^f-q)\right)\right]\\
	&= q^{-f\frac{\ell(\mu)}{2}}\frac{q^{f(2m)}-1}{(q^f+1)(q-1)}\left[-q^{f+1}-q^f+q+1+2q^f-2q\right.\\
	&\qquad\qquad\left.+2q^a\left(q^{f+1}-q -q^{f+1}+q^f-q^f+q)\right)\right]\\
	&= q^{-f\frac{\ell(\mu)}{2}}\frac{q^{f(2m)}-1}{(q^f+1)(q-1)}\left[-q^{f+1}+q^f-q+1\right]\\
	&= q^{-f\frac{\ell(\mu)}{2}}(1-q^{f(2m)}).
\end{align*} 
\end{proof}



\section{The Base Change Lemma}\label{fltchapt}

\subsection{Statement of the Theorem}
\begin{theorem}\label{funlemma}
Let $\gamma\in G$ be semi-simple and regular, and let $\phi\in\zhie$. Then 
$$\orb(b\phi) = \begin{cases} 0, & \gamma\text{ not a norm}\\
	\twisted(\phi), & \gamma\text{ is a norm of }\delta.
\end{cases}
$$
\end{theorem}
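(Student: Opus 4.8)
The plan is to reduce the statement to a basis of $\zhie$ and then invoke the explicit computations already carried out. Since both $\orb$ and $\twisted$ are linear in $\phi$, and since the Bernstein functions $z_\mu$, $\mu\in X_*(S_E)$ dominant, form a $\C$-basis of $\zhie$ (Theorem \ref{berncoefs}), it suffices to prove the identity for $\phi=z_\mu$. By Theorem \ref{basechangeforbernstein}, $b(z_\mu)=z_{f\mu}$, where $f=[E:F]$; note that $\size(f\mu)=f\cdot\size(\mu)$ and $\ell(f\mu)=f\cdot\ell(\mu)$. So the task is to compare $\orb(z_{f\mu})$, computed over $G=GL_2(F)$ with respect to $\gamma$, with $\twisted(z_\mu)$, computed over $G_E$ with respect to $\delta$.

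First I would dispose of the case where $\gamma$ is not a norm. By Lemma \ref{associated}(\ref{assoctwo}), $\gamma$ is a norm if and only if $f\mid \vd(\gamma)$; equivalently (taking into account $\det$ and the sign conventions, or simply that $\orb$ vanishes unless $\size$ matches) $\orb(z_{f\mu})$ is supported on those $\gamma$ with $\vd(\gamma)=\size(f\mu)=f\,\size(\mu)$. When $\gamma$ is not a norm, $\vd(\gamma)$ is not divisible by $f$, hence cannot equal $f\,\size(\mu)$ for any $\mu$, so every $\orb(z_{f\mu})$ vanishes, giving $\orb(b\phi)=0$ by linearity. Next, when $\gamma$ is a norm, Lemma \ref{associated}(\ref{assocone}) lets me replace $\delta$ by a twisted conjugate so that $N(\delta)=\gamma$ exactly; twisted orbital integrals depend only on the twisted conjugacy class of $\delta$, so this is harmless. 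I then split into the two cases of Theorem \ref{mainresult}: $F'$ unramified over $F$, and $F'$ ramified over $F$.

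In the unramified case I compare Theorem \ref{flunram} (value of $\twisted(z_\mu)$ over $E$) with Theorem \ref{flunramorbit} applied to $z_{f\mu}$ over $F$ (this is the $E=F$ specialization of Theorem \ref{flunram}). Here one must observe that the invariant $a$ defined by $\Delta(\gamma)=q^{-a}$ is the \emph{same} whether one works over $F$ or $E$, since $a$ depends only on $\val_F$ of $(\lambda_1-\lambda_2)^2/(\lambda_1\lambda_2)$ and on $\gamma$, not on the base field of the building — this is exactly why the formulas in Theorem \ref{mainresult} are "somewhat independent of the conjugacy class." Then for $\ell(\mu)=0$ both sides equal $2(q^a-1)\left(\frac{q+1}{q-1}\right)$, and for $\ell(\mu)\neq 0$ one side is $2q^{-f\ell(\mu)/2}(1-(-q)^{f\ell(\mu)})$ while the other, being $\orb(z_{f\mu})$, is $2q^{-\ell(f\mu)/2}(1-(-q)^{\ell(f\mu)})=2q^{-f\ell(\mu)/2}(1-(-q)^{f\ell(\mu)})$ — identical. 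The ramified case is the same kind of bookkeeping, comparing Theorem \ref{flram} with Theorem \ref{flramorbit} for $z_{f\mu}$: for $\ell(\mu)=0$ both give $\frac{2q^{a+1}-q-1}{q-1}$ (again using that $a$, and $d_T$, are the relevant building invariants independent of passing from $F$ to $E$), and for $\ell(\mu)\neq0$ both give $q^{-f\ell(\mu)/2}(1-q^{f\ell(\mu)})$. Finally I must check the measure normalizations are compatible: in all four theorems $I_E$ (resp. $I$) has measure $1$ and the maximal compact of $T$ has measure $1$, which is precisely the normalization used to state Theorem \ref{mainresult}, so no correction factor appears.

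The main obstacle — really the only subtle point, since the heavy lifting is done — is justifying that the parameters appearing in the $F$-side formula for $\orb(z_{f\mu})$ and the $E$-side formula for $\twisted(z_\mu)$ are literally the same numbers: that $\Delta(\gamma)$ and $d_T$ computed in the building over $F$ (resp. its quadratic extensions) agree with the quantities $a$ and $d_T$ used in the twisted setting over $E$. This is where one uses Lemma \ref{langlemmatwo} and the discussion in Section \ref{buildappsec} relating $\build(H_E)$ to $\build(H_{EF'})$ and to $\build(H)$, together with the fact that $\Delta$ is defined purely in terms of eigenvalues and the field $F$. Once that is in place the theorem is immediate from term-by-term comparison.
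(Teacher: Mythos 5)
Your overall strategy — reduce to the Bernstein basis $z_\mu$ via linearity, use $b(z_\mu)=z_{f\mu}$, dispatch the non-norm case by a size-of-support argument, and then read off equality of $\orb(z_{f\mu})$ and $\twisted(z_\mu)$ term by term from the explicit formulas — is exactly the paper's strategy, and your observations about the invariants $a$ and $d_T$ being the same in both settings are correct (this is built into how the paper defines these quantities via $\val_F$).

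However, there is a genuine gap: your argument only treats $\gamma$ that are semi-simple regular \emph{elliptic}. The theorem is stated for all semi-simple regular $\gamma$, which includes elements whose centralizer $T$ is an $F$-split torus. Your non-norm argument (support of $z_{f\mu}$ forces $\vd(\gamma)=f\,\size(\mu)$) is indeed valid for any semi-simple regular $\gamma$, but in the norm case you immediately ``split into the two cases of Theorem \ref{mainresult}: $F'$ unramified over $F$, and $F'$ ramified over $F$.'' This presupposes the existence of a quadratic splitting field $F'\neq F$ for $T$ — i.e., that $T$ is elliptic. When $T$ is already split over $F$ there is no such $F'$, and none of Theorems \ref{flunram}, \ref{flunramorbit}, \ref{flram}, \ref{flramorbit} apply. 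The paper handles precisely this case in Lemma \ref{ellipticonly}, which reduces the split-torus case to a matching of (twisted) orbital integrals on the torus itself, citing \cite[4.4.2, 4.4.4, 4.4.6]{haineslemma}. Without that reduction (or an independent argument for the split case), the proof is incomplete. You should insert the analogue of Lemma \ref{ellipticonly} before entering the ramified/unramified dichotomy.
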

Theorem \ref{berncoefs} implies that we need only prove the lemma for $\phi=z_\mu$ for all dominant $\mu\in X_*(S_E)$.

\subsection{Reduction to the Case of Semi-Simple Regular Elliptic $\gamma$}
\begin{lemma}\label{ellipticonly}
Theorem \ref{funlemma} is true for $\gamma$ that are semi-simple regular but not elliptic.
\end{lemma}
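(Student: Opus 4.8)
The plan is to reduce to the split case and then dispose of the two alternatives of Theorem~\ref{funlemma} separately. A semi-simple regular $\gamma\in G$ that is not elliptic is split regular: it is $G$-conjugate to $\diag(\lambda_1,\lambda_2)$ with $\lambda_1,\lambda_2\in F\cross$, $\lambda_1\neq\lambda_2$, and its centralizer $T$ is a conjugate of the split torus $S$. Since both $\orb(b\phi)$ and $\twisted(\phi)$ depend only on the conjugacy class of $\gamma$, I may take $\gamma=\diag(\lambda_1,\lambda_2)$ outright, and by Theorems~\ref{berncoefs} and \ref{basechangeforbernstein} it suffices to treat $\phi=z_\mu$, $b\phi=z_{f\mu}$, for dominant $\mu\in X_*(S_E)$; write $a_i=\val(\lambda_i)$.

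If $\gamma$ is not a norm, then $f\nmid\val\det(\gamma)=a_1+a_2$ by Lemma~\ref{associated}(\ref{assoctwo}). By Theorem~\ref{berncoefs}, $z_{f\mu}$ is supported on the double cosets $IwI$ with $w$ ranging over the $f\mu$-admissible elements of $\wae$, and every such $w$ has size $\size(f\mu)=f\,\size(\mu)\in f\Z$; since $\val\det$ is invariant under conjugation and is constant equal to $\size(w)$ on $IwI$, the function $g\mapsto z_{f\mu}(g^{-1}\gamma g)$ vanishes identically, so $\orb(b\phi)=0$, which is the asserted value.

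When $\gamma$ is a norm I would run the same passage from integration to edge-counting as in Sections~\ref{buildchapt}--\ref{twistedchapt}. Pick $\delta$ with $N(\delta)=\gamma$ (Lemma~\ref{associated}(\ref{assocone})); Lemma~\ref{toruslemma} gives $G_\delta(F)=T$, and then Lemmas~\ref{reductionone} and \ref{oneofflemma} rewrite $\orb(z_{f\mu})$ and $\twisted(z_\mu)$ as finite weighted sums over $T$-orbits of extended edges $e'$ with $\inv(e',\gamma e')$ (resp. $\inv(e',\delta\sigma(e'))$) $f\mu$- (resp. $\mu$-) admissible, the weights being the Bernstein coefficients $z_\mu^s$ of Theorem~\ref{berncoefs}. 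The point is that the action of $\gamma$ on $\build(H)$ is now transparent: when $a_1\neq a_2$ it is a hyperbolic isometry whose axis is the apartment $\sapt$ and whose translation length is $|a_1-a_2|$, and when $a_1=a_2$ it fixes exactly the tube of radius $-\log_q\Delta(\gamma)$ about $\sapt$. Theorem~\ref{distance} then determines $\inv(e',\gamma e')$ precisely, exactly as in Section~\ref{buildappsec}; $\delta\sigma$ acts on $\build(H_E)$ (or, if $\delta$ turns out to be elliptic over $E$, on the building over the appropriate quadratic extension of $E$) in the analogous fashion, with $(\delta\sigma)^f$ inducing $\gamma$. Carrying out both (finite) edge counts and comparing term by term, using $b(z_\mu)=z_{f\mu}$, gives the equality; in particular both sides vanish unless $|a_1-a_2|\le f\,\ell(\mu)$.

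The hard part is the matching in the norm case, and within it two subtleties demand attention: first, $\delta$ need not be split over $E$ even though $\gamma$ is split over $F$, since the norm of an $E$-elliptic element can be $F$-split, so the twisted side may genuinely require a building over a quadratic extension of $E$; second, the degenerate subcase $a_1=a_2$, in which $\gamma$ and $\delta\sigma$ fix whole subtrees and one must take care with the normalization of $dg_T$ on the now non-compact split torus $T$, the finiteness of the orbital integral coming from the regularity of $\gamma$ rather than from compactness of $T\backslash G$. Everything else is bookkeeping parallel to the elliptic case.
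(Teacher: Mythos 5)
Your handling of the ``not a norm'' half is correct and is essentially the same size-obstruction argument the paper uses. But for the ``norm'' half you and the paper part ways completely, and your route has genuine gaps.

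The paper does not do any edge counting for split $\gamma$. Its proof is a parabolic-descent (constant-term) reduction: it cites \cite[4.4.2, 4.4.4, 4.4.6]{haineslemma} to show that for $\gamma$ lying in an $F$-split torus $S'$, (i) if $\gamma$ is a norm it is already the norm of some $\delta\in S'_E$, and (ii) the equality $\orb(b\phi)=\twisted(\phi)$ for $G$ is equivalent to the analogous statement with $G$ replaced by $S'$, which is immediate because on a torus there is no conjugation to integrate over and base change is transparent. This is a short, structural argument.

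Your proposal instead tries to rerun the integration-to-counting machinery of Sections~\ref{buildchapt}--\ref{twistedchapt}. Two concrete problems. First, Corollaries~\ref{unramreduction} and \ref{ramreduction} --- the steps that turn Lemmas~\ref{reductionone}, \ref{oneofflemma} into an actual finite count --- rely on a decomposition $T=\langle g\rangle\times T_{\ints}$ for an elliptic $T$ (compact modulo a single $\Z$), which fails for split $T\cong(F^\times)^2$: for $w=(0,0,s)$ the fixed-edge set $X_w(\gamma)$ is infinite and the quotient by the maximal compact of $T$ is still infinite, so you would need a new decomposition $T\cong\Z^2\times T_\ints$ and a different normalization before any counting can begin. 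You flag the non-compactness of $T$ but don't resolve it; it is not mere bookkeeping. Second, you worry that $\delta$ might be $E$-elliptic even when $\gamma$ is $F$-split and that one might need a building over a quadratic extension of $E$; the cited \cite[4.4.2]{haineslemma} disposes of exactly this (one may always choose $\delta\in S'_E$, hence $E$-split), so that case does not occur --- but you don't have that fact, and without it your twisted count is underspecified. Finally, ``carrying out both (finite) edge counts and comparing term by term ... gives the equality'' is an assertion, not a verification; the case analysis you would have to do (translation length $|a_1-a_2|$ versus $a_1=a_2$, the parity bookkeeping of Theorem~\ref{distance}, the Bernstein weights) is at least as long as the elliptic computations in the paper and is left entirely undone. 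In short: the paper's descent argument sidesteps all of this, and your direct approach, while plausible in outline, is missing the split-torus reduction lemma and the actual computation.
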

Proving the lemma reduces the proof of theorem \ref{funlemma} to the case where $\gamma$ is semi-simple regular elliptic.
\begin{proof}
Then work in \cite[subsections 4.2 and 4.4]{haineslemma} imply this result; we here briefly review the arguments, using the notation and terminology of \cite{haineslemma}. 

Recall that there are only two kinds or semi-simple regular elements, those which lie in an elliptic torus and those which lie in an  $F$-split torus. For $\gamma$ that lie in a torus $S'$ split over $F$,
lemma \cite[4.4.2]{haineslemma} tells us that either $\gamma$ is not a norm or there exists a $\delta\in S'_E$ such that $\gamma=N(\delta)$. The results \cite[4.4.4 and 4.4.6]{haineslemma} show that theorem \ref{funlemma}  holds for $\gamma\in S'$  that are the norm of some $\delta\in S'_E$ and for $\gamma\in S'$ that are not norms if and only if the analog of theorem \ref{funlemma} with $G$ replaced by $S'$ is true. This analog is obvious, completing the proof.
\end{proof}

\subsection{Semi-Simple Regular Elliptic $\gamma$ Not a Norm}
\begin{lemma}Let $\gamma\in G$ be semi-simple regular elliptic. If $\gamma$ is not a norm of $\delta$ for any $\delta\in G_E$ then $\orb(b(z_{E\mu}))=0$ for any $\mu$.
\end{lemma}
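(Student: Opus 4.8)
The plan is to reduce the claim to a divisibility statement about $\val_F\det$. First I would note that, since $\gamma$ is not a norm, Lemma~\ref{associated}~(\ref{assoctwo}) gives that $\val_F\det(\gamma)$ is \emph{not} divisible by $f=[E:F]$. Next, Theorem~\ref{basechangeforbernstein} gives $b(z_{E\mu})=z_{f\mu}$, and Corollary~\ref{berncor} expresses $z_{f\mu}\in\zhi$ as a $\C$-linear combination of characteristic functions $1_{IwI}$ indexed by the $f\mu$-admissible $w\in\wae$; by the description of admissibility in Section~\ref{admiss}, each such $w=(m,b,s)$ has $s=\size(f\mu)=f\cdot\size(\mu)$, hence is divisible by $f$.

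The remaining step is to show that $\orb(1_{IwI})=0$ whenever $\size(w)\neq\val_F\det(\gamma)$. I would argue as follows: on the double coset $IwI$ the function $\val_F\det$ is constant with value $s=\size(w)$, because a chosen representative of $w$ in $G$ has $\val_F\det$ equal to $s$ while every element of $I$ has $\val_F\det=0$ (Section~\ref{iwahorinote}). So if some conjugate $g^{-1}\gamma g$ lies in $IwI$, then, by conjugation-invariance of the determinant, $\val_F\det(\gamma)=\val_F\det(g^{-1}\gamma g)=s$; thus when $s\neq\val_F\det(\gamma)$ the integrand of $\orb(1_{IwI})=\int_{T\backslash G}1_{IwI}(g^{-1}\gamma g)\,d\dot g$ vanishes identically and the integral is $0$. (This is the same as the statement that $X_w(\gamma)=\emptyset$; cf.\ Theorem~\ref{distance}~(\ref{distone}).)

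Putting these together: because $\val_F\det(\gamma)$ is not divisible by $f$, it cannot equal $\size(w)$ for any $w$ occurring in the expansion of $z_{f\mu}$ from Corollary~\ref{berncor}, so every term of that expansion contributes $0$ and hence $\orb(b(z_{E\mu}))=0$, as claimed. I do not anticipate a genuine obstacle here; the only care needed is bookkeeping — matching the sign conventions for $\size$ against those for $\val_F\det$ and confirming that $b$ sends $z_{E\mu}$ to a function supported on extended edges of size divisible by $f$ — both facts being recorded in Sections~\ref{weylnote}, \ref{admiss}, and \ref{bernsec}.
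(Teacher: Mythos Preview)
Your proposal is correct and follows essentially the same route as the paper: both argue that $b(z_{E\mu})=z_{f\mu}$ is supported on $f\mu$-admissible $w$, all of which have size divisible by $f$, while $\gamma$ not being a norm forces $\val_F\det(\gamma)$ to be indivisible by $f$, so no conjugate of $\gamma$ can meet the support. The paper invokes Theorem~\ref{distance}~(\ref{distone}) and \cite[4.7]{langlands} directly, whereas you spell out the determinant-on-$IwI$ argument and use Lemma~\ref{associated}~(\ref{assoctwo}); these are the same facts.
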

\begin{proof}
The only characteristic functions $1_{IwI}$ appearing with non-zero coefficients in $b(z_{\mu})=z_{f\cdot\mu}$ have $w$ that are $f\cdot\mu$-admissible, and so have size divisible by $f$. Theorem \ref{distance} dictates that the size of all $w$ such that $g^{-1}\gamma g\in IwI$ for some $g$ is $\val_F\det(\gamma)$. Since $\gamma\in N(GL_2(E))$ if and only if $f$ divides $\val_F\det(\gamma)$ \cite[4.7]{langlands}, $f$ must not divide $\val_F\det(\gamma)$, nor the size of the $w$ that have $\{g^{-1}\gamma g:g\in G\}\cap IwI \neq \emptyset$.
\end{proof}

\subsection{Semi-Simple Regular Elliptic $\gamma=N(\delta)$}
Lemma \ref{associated} (\ref{assocone}), and the invariance of $\twisted$ under $\sigma$-conjugation of $\delta$ and of $\orb$ under conjugation of $\gamma$ allow us to assume without loss of generality that $N(\delta)=\gamma$. 
Recall from theorem \ref{basechangeforbernstein} that $b(z_\mu)=z_{f\mu}$. Then the lemma follows from comparing the values of $\twisted(z_\mu)$ from section \ref{twistedchapt} to the values of $\orb(z_{f\mu})$ from chapter \ref{orbitalchapt}, paying close attention to the ramification (or not) of $F'/F$. \begin{flushright}\hfill$\qed$\end{flushright}

%
%
%
%
%
%
%
%
\section{Acknowledgements}
I would like to thank my advisor, Prof.\ Thomas Haines, for suggesting this line of research, for many fruitful conversations about the background of this problem and comments regarding the style and substance of this work, for his advice throughout the last 4 (going on 5) years, for his support mathematically and fiscally, and, most of all, I think, for his patience.
 
I would also like to thank Moshe Adrian for discussing with me the relevant aspects of elliptic tori in $GL_2$, and Kevin Wilson for assistance with details about the structure of the Bruhat-Tits buildings of $GL_2$ and $SL_2$, for listening to my ideas, and for being willing and able to point out their (many) flaws. 

Many friends have helped in ways mathematical, logistical, and social in the years leading up to this paper. In addition to those already named, Randy Baden, Ted Clifford, Ben and Laura Lauser, Beth McLaughlin, Chris and Kate Truman, you have my gratitude and my thanks.

Some smaller friends have also provided absolutely necessary distractions. Izzy, Eve, thanks.

My extended family, Kak and Barb, have helped out on numerous occasions, not least helping keep things in order after each birth. Thanks, guys.

Finally, I would like to thank my wife Naomi and two children Connor and Samuel. Thanks for putting up with me. You fill my life with joy, fun, and purpose. This is for you.

\bibliographystyle{amsplain}
\bibliography{thesis} 

\providecommand{\bysame}{\leavevmode\hbox to3em{\hrulefill}\thinspace}
\providecommand{\MR}{\relax\ifhmode\unskip\space\fi MR }
\providecommand{\MRhref}[2]{%
  \href{http://www.ams.org/mathscinet-getitem?mr=#1}{#2}
}
\providecommand{\href}[2]{#2}
\begin{thebibliography}{10}

\bibitem{arthurclozel}
J.~Arthur and L.~Clozel, \emph{Simple algebras, base change, and the advanced
  theory of the trace formula}, Princeton University Press, 1989.

\bibitem{brown}
K.~Brown, \emph{Buildings}, Springer-Verlag, New York, 1989.

\bibitem{clozel}
L.~Clozel, \emph{The fundamental lemma for stable base change}, Duke Math. J.
  \textbf{61} (1990), no.~1, 255--302.

\bibitem{haineslust}
U.~{G\"ortz}, T.~Haines, R.~Kottwitz, and D.~Reuman, \emph{Affine
  {D}eligne-{L}usztig varieties in affine flag varieties}, Compositio
  Mathematica \textbf{146(5)} (2010), 1339--1382.

\bibitem{hainestest}
T.~Haines, \emph{Test functions for {S}himura varieties: the {D}rinfeld case},
  Duke Math J. \textbf{106} (2000), no.~1, 19--40.

\bibitem{hainesbern}
\bysame, \emph{The combinatorics of {B}ernstein functions}, Trans. Amer. Math.
  Soc. (2001), no.~353, 1251--1278.

\bibitem{hainesshimura}
\bysame, \emph{Introduction to {S}himura varieties with bad reduction of
  parahoric type}, Harmonic Analysis, The Trace Formula, And Shimura Varieties
  (J.~Arthur, D.~Elwood, and R.~Kottwitz, eds.), Clay Math Proceedings, vol.~4,
  American Mathematical Society and Clay Math Institute, 2005, pp.~583--642.

\bibitem{haineslemma}
\bysame, \emph{The base change fundamental lemma for central elements in
  parahoric {H}ecke algebras}, Duke Math J. \textbf{149} (2009), no.~3,
  569--643.

\bibitem{iwmat}
N.~Iwahori and H.~Matsumoto, \emph{On some {B}ruhat decompositions and the
  structure of the {H}ecke rings of $p$-adic {C}hevalley groups}, Inst. Hautes
  {\'Etudes} Sci. Publ. Math. \textbf{25} (1965), 5--48.

\bibitem{kotthesis}
R.~Kottwitz, \emph{Orbital integrals on {$GL_3$}}, Amer. J. Math \textbf{102}
  (1980), 327--384.

\bibitem{kotunit}
\bysame, \emph{Base change for unit elements of {H}ecke algebras}, Compositio
  Mathematica \textbf{60(2)} (1986), 237--250.

\bibitem{kottwitz}
\bysame, \emph{Harmonic analysis on reductive $p$-adic groups and {L}ie
  algebras}, Harmonic Analysis, The Trace Formula, And Shimura Varieties
  (J.~Arthur, D.~Elwood, and R.~Kottwitz, eds.), Clay Math Proceedings Vol. 4,
  American Mathematical Society and Clay Math Institute, 2005, pp.~393--524.

\bibitem{labesse}
J.-P. Labesse, \emph{Fonctions {\'e}l{\'e}mentaire et lemme fondamental pour le
  changement de base stable}, Duke Math. J. \textbf{61} (1990), 519--530.

\bibitem{langlands}
R.~Langlands, \emph{Base change for {$GL_2$}}, Princeton University Press,
  1980.

\bibitem{lusztig}
G.~Lusztig, \emph{Affine {H}ecke algebras and their graded versions}, J. Amer.
  Math. Soc. \textbf{2} (1989), 599--635, English translation, Russ. Math.
  Surv. 28 (1973), $1$--$26$.

\bibitem{prasad}
G.~Prasad, \emph{Galois-fixed points in the {Bruhat-Tits} building of a
  reductive group}, Bull. Soc. math. France \textbf{129} (2001), no.~2,
  169--174.

\bibitem{rousseau}
G.~Rousseau, \emph{Immeubles des groupes {r\'eductifs} sur les corps locaux},
  Ph.D. thesis, Universit\'e d'Orsay, 1977, Text available as of 5/26/2010 at
  \url{http://www.iecn.u-nancy.fr/~rousseau/Textes/}.

\bibitem{saito}
H.~Saito, \emph{Automorphic forms and algebraic extensions of number fields},
  Lectures in Math. \textbf{8} (1975), Kyoto University.

\bibitem{serre}
J.-P. Serre, \emph{Trees}, Springer-Verlag, Berlin, 1980, English translation
  of {``}Arbres, Amalgames, $SL_2$,{"} Ast\'erisque 46, 1977.

\bibitem{shintani}
T.~Shintani, \emph{On liftings of holomorphic cusp forms}, Automorphic Forms,
  Representations, and {$L$}-Functions (A.~Borel and W.~Casselman, eds.), Proc.
  Sympos. Pure Math., vol. 33 Part 2, American Mathematical Society, 1979,
  pp.~97--110.

\end{thebibliography}

\end{document}